\setlist[itemize]{leftmargin=18pt}
\setlist[enumerate]{leftmargin=18pt}
\let\amsamp=&
\theoremstyle{plain}
\newtheorem{thm}{Theorem}[section]
\newtheorem{lem}[thm]{Lemma}
\newtheorem{prop}[thm]{Proposition}
\newtheorem{cor}[thm]{Corollary}
\newtheorem{conj}[thm]{Conjecture}
\theoremstyle{definition}
\newtheorem{definition}[thm]{Definition}
\newtheorem{example}[thm]{Example}
\newtheorem{notation}[thm]{Notation}
\newtheorem{rmk}[thm]{Remark}
\newcommand{\f}[1]{\mathbb{#1}}
\newcommand{\Q}{\mathbb{Q}} 
\newcommand{\bT}{\mathbb{T}} 
\newcommand{\bA}{\mathbb{A}} 
\newcommand{\bH}{\mathbb{H}} 
\newcommand{\A}{\mathscr{A}}
\newcommand{\cO}{\mathcal{O}}
\newcommand{\cV}{\mathcal{V}}
\newcommand{\cE}{\mathcal{E}}
\newcommand{\cB}{\mathcal{B}}
\newcommand{\sE}{\mathscr{E}}
\newcommand{\cW}{\mathcal{W}}
\newcommand{\cH}{\mathcal{H}}
\newcommand{\cF}{\mathcal{F}}
\newcommand{\cK}{\mathcal{K}}
\newcommand{\cR}{\mathcal{R}}
\newcommand{\bZ}{\Bbb Z}
\newcommand{\bP}{\Bbb P}
\newcommand{\bL}{\Bbb L}
\newcommand{\bK}{\Bbb K}
\newcommand{\bF}{\Bbb F}
\newcommand{\bb}{\mathfrak{b}}
\newcommand{\rk}{\mathop{\hbox{\rm{rk}}}}
\newcommand{\Spec}{\mathop{\hbox{\rm{Spec}}}}
\newcommand{\Ext}{\mathop{\hbox{\rm{Ext}}}}
\newcommand{\Mat}{{\mathop{\hbox{\rm{Mat}}}}}
\newcommand{\Hom}{{\mathop{\hbox{\rm{Hom}}}}}
\newcommand{\RHom}{{\mathop{\hbox{\rm{RHom}}}}}
\newcommand{\End}{{\mathop{\hbox{\rm{End}}}}}
\newcommand{\Def}{{\mathop{\hbox{\rm{Def}}}}}
\DeclareMathOperator{\IF}{if}
\newcommand{\Ker}{\mathrm{Ker}}
\newcommand{\Perf}{\mathrm{Perf}}
\title[Deformations of Kalck--Karmazyn algebras via  Mirror Symmetry]{Deformations of Kalck--Karmazyn algebras \\ via  Mirror Symmetry}
\author{Yank\i\ Lekili}
\address{Department of Mathematics, Imperial College, London, UK}
\email{y.lekili@imperial.ac.uk}
\author{Jenia Tevelev}
\address{Department of Mathematics \& Statistics, University of Massachusetts, Amherst, USA.}
\email{tevelev@umass.edu}
\begin{document}

\begin{abstract}
As observed by Kawamata \cite{K21}, a $\mathbb{Q}$-Gorenstein smoothing of a Wahl singularity gives rise to a one-parameter flat degeneration of a matrix algebra.
A~similar result holds for a general smoothing of any two-dimensional cyclic quotient singularity, where the matrix algebra
is replaced by a hereditary algebra \cite{TU22}.
From a categorical perspective, these one-parameter families of finite-dimensional algebras "absorb" the singularities of the threefold total spaces of smoothings. These results were established using abstract methods of birational geometry, making the explicit computation of the family of algebras challenging.
Using mirror symmetry for genus-one fibrations \cite{LPol}, we identify a remarkable immersed Lagrangian with a bounding cochain in the punctured torus. The endomorphism algebra of this Lagrangian in the relative Fukaya category corresponds to this flat family of algebras. This enables us to compute Kawamata's matrix order explicitly.
\end{abstract}

\maketitle

\section{Introduction}

The notion of  singularity category \cite{Buchw,Orlov} provides a direct way to compare module categories of rings of dissimilar nature, such as the local rings of singular algebraic varieties and finite-dimensional algebras.
Sometimes one  can even find 
an algebra $\cR$ that ``absorbs'' 
\cite{KuSh}
singularities of an algebraic variety $\cW$, i.e.,
there exists a semi-orthogonal decomposition $D^b(\cW)=\langle D^b(\cR),\cB\rangle$ such that $\cB\subset\Perf(\cW)$.
The algebra $\cR$ is typically presented as the endomorphism algebra of some object in $D^b(\cW)$, such as a vector bundle on $\cW$. The goal of this paper is to 
demonstrate that homological mirror symmetry can help to compute the algebra $\cR$ explicitly.

Consider a cyclic quotient singularity $\bA^2 / \mu_r$, where the primitive root of unity $\zeta \in \mu_r$ acts on $\bA^2$ with weights $(\zeta, \zeta^a)$, and $a$ and $r$ are coprime. This singularity, denoted by ${1 \over r}(1, a)$, is absorbed, after an appropriate compactification, by an $r$-dimensional algebra $R_{r,a}$ called the Kalck--Karmazyn algebra \cite{KK17, KKS}.

For example, the singularity ${1 \over 4}(1, 1)$ (the cone over the rational normal curve of degree $4$)  is absorbed by the $4$-dimensional algebra $R_{4,1} = k[x, y, z]/(x, y, z)^2$.
In~general, we show that $R_{r,a}$ has a  simple multiplication table; see Corollary~\ref{aethdthat}.

A singularity of dimension $n+1$ can be viewed as the total space of a deformation of an $n$-dimensional singularity. The notion of categorical absorption is modified so that $\cR$ is now a $B$-algebra, where $\Spec B$ is the base of the deformation.
The algebra $\cR$ was constructed for general deformations of ${1\over r}(1, a)$ in~\cite{TU22}. It~is flat over $B$ and has the Kalck--Karmazyn algebra $R_{r,a}$ as the special fiber. Its~general fiber is Morita-equivalent to the path algebra of an acyclic quiver.
The proof is based on \cite{K21}, which studied the following special case: the singularity is Wahl, i.e., $r=n^2$ and $a=nq-1$ for coprime $n$ and $q$, and the smoothing is $\mathbb{Q}$-Gorenstein, meaning the relative canonical divisor is $\mathbb{Q}$-Cartier.
In this case, it turns out that the deformation is absorbed by a matrix order. Recall that a matrix order over, say, $B = k[t]$, is a flat $B$-algebra $\cR$ such that $\cR \otimes_B K = \Mat_n(K)$, where $K = k(t)$.

\begin{example}
Let  $\cR \subset  \Mat_2(k[t])$ be a subalgebra given by elements of the form
\begin{equation}\label{sfvfbdfad}\left[\small\begin{matrix}
a_0&
       t a_{3}\\
       {t a_{1}}&
       t a_{2}+a_{0}\\
\end{matrix}\right]
\end{equation}
where $a_i \in k[t]$ for $i=0,1,2,3$.
It can be verified that $\cR$ is an order which gives a flat deformation of the $4$-dimensional algebra  $R_{4,1} = k[x,y,z]/(x,y,z)^2$.
In~fact, this order corresponds to the $\Q$-Gorenstein smoothing of the singularity ${1\over 4}(1,1)$.
\end{example}

One of our main results is the calculation of a matrix order $\cR_{n^2,nq-1}$, which corresponds to the $\mathbb{Q}$-Gorenstein smoothing of any Wahl singularity ${1\over n^2}(1, nq-1)$. Before describing this order, we first explain our geometric approach.

In Section~\ref{sfbdfbdhzdtjts}, we compactify the singularity ${1\over r}(1, a)$ using a projective surface $W$ and review the construction of a remarkable vector bundle $F$ on $W$ of rank~$r$, called the Kawamata vector bundle. The Kalck--Karmazyn algebra $R_{r,a}$ is defined as the endomorphism algebra $\mathrm{End}(F)$. It is an $r$-dimensional $k$-algebra.
The Kawamata vector bundle deforms to the vector bundle $\cF$ on the total space $\cW$ of any deformation of $W$, producing a flat family $\cR = \mathrm{End}(\cF)$ of $r$-dimensional algebras.

To apply mirror symmetry, we choose a compactification $W$  that contains an anticanonical divisor $E = A \cup B$, a curve of arithmetic genus $1$. The components $A$ and $B$, both isomorphic to $\bP^1$, intersect at the singular point $P$ of $W$ as the orbifold coordinate axes of $\bA^2 / \mu_r$ and at a smooth point $Q$ transversally (see Figure~\ref{zxfbxzfbzfnzdf}). We~reduce the computation to $E$ by showing, in Lemma~\ref{sGSGsgsr}, that $R_{r,a} \cong \mathrm{End}(F|_E)$.

\begin{figure}[htbp]
\begin{center}
\includegraphics[width=\textwidth]{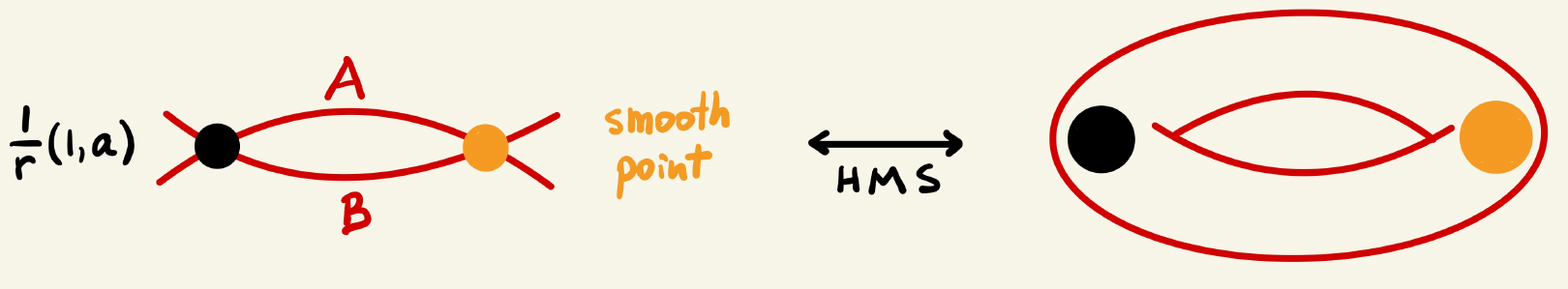}
\caption{The divisor $E$ and its mirror, the two-punctured torus $\bT_2$}
\label{zxfbxzfbzfnzdf}
\end{center}
\end{figure}

Instead of computing $\mathrm{End}(F|_E)$ directly in the perfect derived category $\Perf(E)$, we use homological mirror symmetry. By our construction, $E$ is a cycle of two projective lines, where the irreducible components meet transversely at two points.
We write $E_n$ for the  curve with $n$ irreducible components, each isomorphic to $\mathbb{P}^1$, such that the intersection complex is an $n$-gon. Homological mirror symmetry for $E_n$ was proven in \cite{LPol} (and alternatively in \cite{LPolAus}) as an explicit quasi-equivalence between the split-closed derived compact Fukaya category of the $n$-punctured torus $\mathbb{T}_n$ and the perfect derived category of $E_n$:\begin{equation} \label{hms2} \mathcal{F}(\mathbb{T}_n) \simeq \mathrm{Perf}(E_n). \end{equation}

\begin{definition}
 The Kawamata Lagrangian $\Bbb K_{r,a} \in \mathcal{F}(\mathbb{T}_2)$ is the mirror Lagrangian of the vector bundle $F|_{E} \in \mathrm{Perf}(E_2)$ under homological mirror symmetry. 
\end{definition}

In our calculations with Fukaya categories, we will always assume that $n \geq 1$. Since the symplectic surface $\mathbb{T}_n$ is punctured at least once, its symplectic form is exact, $\omega = d\lambda$, and we use the exact Fukaya category as defined in~\cite{Seidelbook}.
The objects of $\mathcal{F}(\mathbb{T}_n)$ are connected, compact, and exact Lagrangians $\mathbb{L}$ (i.e., $\lambda|\mathbb{L}$ is exact) with a choice of brane data (spin structure, a $U(1)$-local system, and grading data). Changing the spin structure or local system on $\mathbb{L}$ in $\mathcal{F}(\mathbb{T}_n)$ corresponds to tensoring the corresponding complex in $\mathrm{Perf}(E_n)$ with a topologically trivial line bundle (equivalently, one of degree $0$ on all irreducible components of $E_n$), while changing the grading data corresponds to a shift in the triangulated category.
In our illustrations of Lagrangians, we choose a closed curve from every homotopy class to represent the unique (up to Hamiltonian isotopy) exact Lagrangian in that class.

The Kawamata Lagrangian $\bK_{r,a}$ is computed in Theorem~\ref{argargerhwetb} and illustrated in Figure~\ref{fbadhdthtsthstr}, where $b$ is the inverse of $a$ modulo $r$. We represent a $2$-torus as a rectangle with opposite sides identified. Note the two punctures near the NW corner.

\begin{figure}[hbtp]
\begin{center}
\begin{tikzpicture}[scale=0.5]
    \draw[very thick] (0,0) rectangle (16,16);
    
    \foreach \i in {0,...,10}
    {
      \draw[purple, very thick] (0,10.5-\i) .. controls +(0:3) and +(180:3) .. (16, 15.5-\i);
    }

    \foreach \i in {1,2,3,4}
    {
      \draw[purple, very thick] (1.6+3.2*\i,0) .. controls +(30:1) and +(180:1) .. (16, 4.5-\i);
    }

    \foreach \i in {1,2,3,4}
    {
      \draw[purple, very thick] (0,15.5-\i) .. controls +(0:1) and +(190:1) .. (1.6+3.2*\i,16);
    }

    \draw[purple, very thick] (0,15.5) .. controls +(45:1) and +(270:1) .. (16, 4.5);

    \draw[fill=black!70!black, thick] (0.3,14.95) circle (0.2cm);

    \draw[fill=orange,thick] (1.4,15.6) circle (0.2cm);

    \draw[decorate,decoration={brace,amplitude=10pt},xshift=0pt,yshift=5pt]
    (-0.5,0.5) -- (-0.5,15.5) node[midway,above, xshift=-15pt, yshift=-5pt] {$r$};

   \draw[decorate,decoration={brace,amplitude=10pt},xshift=0pt,yshift=5pt]
    (16.5,15.5) -- (16.5,5.5) node[midway,above, xshift=15pt, yshift=-5pt] {$b$.};

\end{tikzpicture}
\end{center}
\caption{Kawamata Lagrangian $\Bbb K_{r,a}$ (here $r=16$, $a=3$, $b=11$)}
\label{fbadhdthtsthstr}
\end{figure}
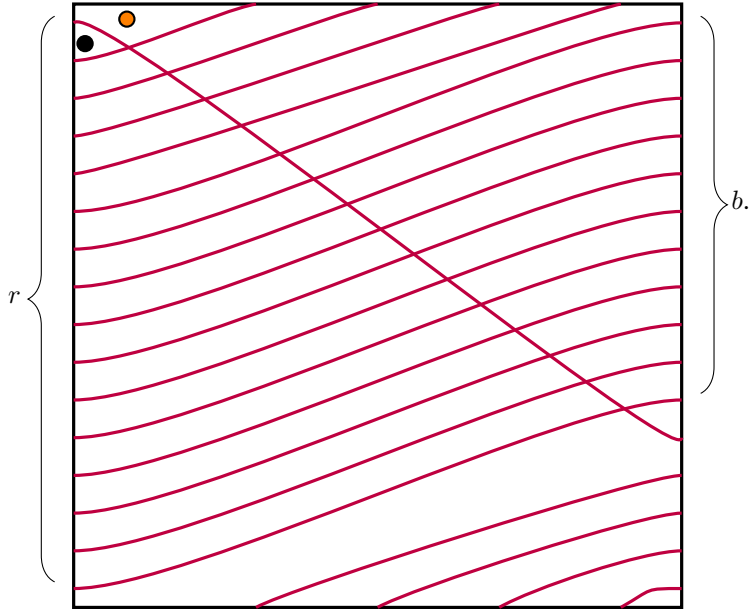

Thus, by the definition of $\bK_{r,a}$, we have that $\mathrm{End}(\bK_{r,a})$ is isomorphic to the Kalck--Karmazyn algebra $R_{r,a}$. 
While we are mostly interested in deformations of $R_{r,a}$, our study also
yields a simple multiplication table for the algebra $R_{r,a}$ itself, which was previously unknown.
We find this description of $R_{r,a}$, provided below, easier to work with than its presentation by generators and relations \cite[p.~3]{KK17}.

\begin{cor}\label{aethdthat}
The Kalck--Karmazyn algebra 
$R_{r,a}$ has basis $w_i$ for $i\in\bZ_r$ and product
\begin{equation}\label{sDBASBASBRQ}
w_jw_i=\begin{cases}
w_{j+i} & \hbox{\rm if a certain condition is satisfied}\cr
0 &  \hbox{\rm otherwise.}
\end{cases}
\end{equation}
To explain the condition, let $\gamma:\,\bZ^2\to\bZ_r$ be a 
homomorphism
$(i,j)\mapsto j-bi \mod r$ and consider a sublattice 
$\Gamma = \Ker(\gamma)\subset\bZ^2$.
We  plot points of $\Gamma$ as orange dots, as they correspond to the orange puncture (see Figure~\ref{fbadhdthtsthstr})
in the universal cover of the torus.
Consider the biggest Young diagram in the first quadrant with the bottom left corner at $(0,0)$ that does not contain orange dots in its interior.
We~fill every box of this Young diagram with the number $\gamma(i,j)\in\bZ_r$, where $(i,j)$ is the bottom left corner of the box.

To~compute the product $w_jw_i$,
we locate the box filled with $j$ (resp.,with~$i$) in the bottom row (resp., left column) 
of the  Young diagram. If the smallest rectangle containing these boxes is contained in the Young diagram,
then  $w_jw_i=w_{j+i}$. Otherwise,  $w_jw_i=0$.
\end{cor}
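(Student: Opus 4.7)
The plan is to compute $\End_{\mathcal{F}(\bT_2)}(\bK_{r,a})$ directly from the picture of the Kawamata Lagrangian in Figure~\ref{fbadhdthtsthstr}, using the chain of isomorphisms $R_{r,a} \cong \End(F|_E) \cong \End(\bK_{r,a})$ provided by Lemma~\ref{sGSGsgsr} and homological mirror symmetry \eqref{hms2} together with the explicit description of $\bK_{r,a}$ established in Theorem~\ref{argargerhwetb}. Since $\bT_2$ is a surface, both the basis of the endomorphism algebra (self-intersections of $\bK_{r,a}$) and the product (a count of immersed triangles deformed by the bounding cochain) admit direct combinatorial descriptions in the universal cover.

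First, I would identify the self-intersections of $\bK_{r,a}$ with $\bZ_r$. Lifting $\bK_{r,a}$ to the universal cover $\mathbb{R}^2 \to \bT^2$ of the unpunctured torus and straightening by a Hamiltonian isotopy, the preimage consists of parallel lines whose common $\bZ^2$-stabilizer is precisely the sublattice $\Gamma = \Ker(\gamma) \subset \bZ^2$; this follows from the slope data $(r, b)$ in Figure~\ref{fbadhdthtsthstr}. Consequently, pairs of distinct lifts meet in $\bZ^2/\Gamma$-orbits indexed by $\bZ_r$ via $\gamma$, providing a basis $\{w_i\}_{i \in \bZ_r}$ of self-intersection points. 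The preimages of the orange puncture form exactly the lattice $\Gamma$ (as plotted in Figure~\ref{fbadhdthtsthstr}), while the black puncture lifts to a $\bZ^2$-orbit offset from $\Gamma$ in a way that generates the immersed bigons killed by the bounding cochain.

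Next, I would compute the $m_2^b$-product as a count of immersed triangles bounded by $\bK_{r,a}$ with corners at self-intersections, deformed by the bounding cochain from Theorem~\ref{argargerhwetb}. For fixed inputs $w_j$ and $w_i$, pick representative lifts at the bottom-row box $(i', 0)$ (where $-b i' \equiv j \bmod r$) and the left-column box $(0, j')$ (where $j' \equiv i \bmod r$), respectively. By linearity of $\gamma$, the unique candidate triangle has third vertex at $(i', j')$, which projects to $w_{\gamma(i', j')} = w_{j+i}$. This triangle contributes nontrivially precisely when its interior contains no preimage of either puncture, i.e.\ no point of $\Gamma$ lies in the interior of the rectangle $[0, i'] \times [0, j']$; the black-puncture obstructions have already been neutralized by the bounding cochain. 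The condition ``no orange dot in the interior of $[0, i'] \times [0, j']$'' is literally the requirement that the rectangle fits inside the maximal Young diagram of the corollary, yielding formula \eqref{sDBASBASBRQ}.

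The main obstacle is the bookkeeping associated with the bounding cochain: one must verify that all higher-order contributions to $m_2^b$, obtained by inserting $b$ into $m_k$ for $k > 2$, either cancel pairwise or resum cleanly to the single-triangle count described above, leaving no spurious terms. Signs and gradings must also be tracked carefully using the brane data on $\bK_{r,a}$ fixed in Theorem~\ref{argargerhwetb}, so that the coefficient of $w_{j+i}$ is exactly $+1$ rather than $\pm 1$. Once this combinatorial cancellation is in place, the Young-diagram multiplication rule follows as a direct geometric translation of the rectangle/puncture condition in the universal cover.
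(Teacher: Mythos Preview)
Your geometric instinct---count immersed triangles with boundary on $\bK_{r,a}$ in the universal cover and check that they miss the orange puncture---is exactly the mechanism the paper uses (see Corollary~\ref{GSRGHSRHRH}, which specializes Corollary~\ref{argaergqehqeth} to $s=0$ and all $t_i=0$). But two features of your proposal are off.

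First, there is no bounding cochain on $\bK_{r,a}$ in $\mathcal{F}(\bT_2)$. Theorem~\ref{argargerhwetb} equips the Lagrangian only with the standard brane data (bounding spin structure, trivial local system, grading); bounding cochains $\mathfrak{b}$ enter only in Section~\ref{sFBASBAHATH}, when one passes to the \emph{relative} category $\mathcal{F}(\bT_1,\{s\})$ to study deformations of $R_{r,a}$. In $\mathcal{F}(\bT_2)$ the black point is a genuine puncture, so holomorphic polygons simply cannot cross it---nothing needs to be ``neutralized.'' Your entire ``main obstacle'' paragraph, and the references to $m_2^{\mathfrak b}$ and to cancellations among higher $m_k$ with $\mathfrak b$-insertions, are therefore addressing a phantom. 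What actually requires care (and is carried out in Theorem~\ref{sGsgrsherheq}) is separating the \emph{hidden} contributions to $\mathfrak{m}_2$, coming from the Weinstein neighbourhood of the immersed curve via Hamiltonian push-offs, from the \emph{visible} immersed polygons. On degree-$0$ inputs the hidden part supplies only the unit axioms, and the visible triangles supply exactly the Young-diagram rule.

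Second, your description of the lift is not quite right. The curve $\bK_{r,a}$ has $r-1$ essential self-intersections (Theorem~\ref{argargerhwetb}), so no Hamiltonian isotopy straightens it, and its preimage in $\mathbb{R}^2$ is not a family of parallel lines. The paper works instead with the grid-like structure of the immersion directly (Figures~\ref{asrgargqre} and~\ref{argargargar}); the orange lattice $\Gamma$ and the labels $\gamma(i,j)$ on boxes come from that grid, not from straight-line lifts. Relatedly, $w_0$ is not a self-intersection point but the minimum of a Morse function on the domain of the immersion (Definition~\ref{sFBAFBADFNAD}); the $r-1$ self-intersections account only for $w_1,\dots,w_{r-1}$.
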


This description of the Kalck--Karmazyn algebra $R_{r,a}$
arises from our analysis of holomorphic polygons with boundaries on the Kawamata Lagrangian.

\begin{example}
Suppose $r=9$ and $a=2$. Then $b=5$. From the Young diagram, 
the  non-trivial products in $R_{9,2}$ are 
$w_4w_1=w_5$, $w_4w_2=w_6$, $w_4w_3=w_7$, and $w_4^2=w_{8}$. 
\begin{center}
\begin{tikzpicture}[scale=0.6]
  \pgfmathtruncatemacro{\maxVal}{10} 
  \pgfmathtruncatemacro{\minVal}{0} 
  \pgfmathtruncatemacro{\q}{5} 
  \pgfmathtruncatemacro{\n}{9} 
    
    \foreach \x in {\minVal,...,\maxVal}
        \foreach \y in {\minVal,...,\maxVal}
            \fill[gray] (\x,\y) circle (0.05);

   \foreach \i in {\minVal,...,\maxVal} {
     \foreach \j in {\minVal,...,\maxVal} {
       \pgfmathtruncatemacro{\result}{mod(\q*\i - \j, \n)}
       \ifnum\result=0
         \filldraw[orange] (\i,\j) circle (0.15);
       \fi
    }
  }

\pgfmathtruncatemacro{\x}{0}  
\foreach \j in {0,...,\numexpr\n-1} {
   \foreach \i in {0,...,\numexpr\n}
   {
       \pgfmathtruncatemacro{\res}{mod(\q*\i - \j+\q, \n)}
       \ifnum \res= 0
           \xdef\x{\i}
           \breakforeach
         \fi
       
       \ifnum \j >0
            \ifnum \i > \numexpr\x-1
            \breakforeach
         \fi
         \fi

         \pgfmathtruncatemacro{\myv}{mod(\q*\i - \j,\n)}
         \ifnum \myv > 0
           \pgfmathtruncatemacro{\myv}{\n-\myv}
         \else
           \pgfmathtruncatemacro{\myv}{-\myv}
         \fi
         
         \ifnum \numexpr\i + \numexpr\j = 0
         \else  
         \fill[white] (\i,\j) circle (0.1);
         \fi
         \node at (\i,\j) {\myv};


  }
}

\node at (\maxVal+3, \minVal) {$r=9$, $a=2$}; 
\end{tikzpicture}

\end{center}
\end{example}

\begin{example}\label{akjbcjaksbdakjs}
$a=b=r-1$ or  $a=b=1$ are the only cases when $R_{r,a}$ is a commutative algebra. Below, 
the first case is illustrated on the left and the second case on the right (for $r=7$).
In the first case, $w_jw_i=w_{j+i}$ if and only if $j+i<r$
and so $R_{r,r-1}\cong\bZ[t]/t^r$ via an isomorphism $w_i\mapsto t^i$.
In the second case,  $w_jw_i=w_{j+i}$ if~and only if $i=0$ or $j=0$ and so
$R_{r,1}\cong\bZ[w_1,\ldots,w_{r-1}]/(w_1,\ldots,w_{r-1})^2$.

\begin{center}
\begin{tikzpicture}[scale=0.5]
  \pgfmathtruncatemacro{\maxVal}{8} 
  \pgfmathtruncatemacro{\minVal}{0} 
  \pgfmathtruncatemacro{\q}{6} 
  \pgfmathtruncatemacro{\n}{7} 
    
    \foreach \x in {\minVal,...,\maxVal}
        \foreach \y in {\minVal,...,\maxVal}
            \fill[gray] (\x,\y) circle (0.05);

   \foreach \i in {\minVal,...,\maxVal} {
     \foreach \j in {\minVal,...,\maxVal} {
       \pgfmathtruncatemacro{\result}{mod(\q*\i - \j, \n)}
       \ifnum\result=0
         \filldraw[orange] (\i,\j) circle (0.15);
       \fi
    }
  }

\pgfmathtruncatemacro{\x}{0}  
\foreach \j in {0,...,\numexpr\n-1} {
   \foreach \i in {0,...,\numexpr\n}
   {
       \pgfmathtruncatemacro{\res}{mod(\q*\i - \j+\q, \n)}
       \ifnum \res= 0
           \xdef\x{\i}
           \breakforeach
         \fi
       
       \ifnum \j >0
            \ifnum \i > \numexpr\x-1
            \breakforeach
         \fi
         \fi

         \pgfmathtruncatemacro{\myv}{mod(\q*\i - \j,\n)}
         \ifnum \myv > 0
           \pgfmathtruncatemacro{\myv}{\n-\myv}
         \else
           \pgfmathtruncatemacro{\myv}{-\myv}
         \fi
         
         \ifnum \numexpr\i + \numexpr\j = 0
         \else  
         \fill[white] (\i,\j) circle (0.1);
         \fi
         \node at (\i,\j) {\myv};


  }
}

\end{tikzpicture}
\qquad\qquad
\qquad\qquad
\begin{tikzpicture}[scale=0.5]
  \pgfmathtruncatemacro{\maxVal}{8} 
  \pgfmathtruncatemacro{\minVal}{0} 
  \pgfmathtruncatemacro{\q}{1} 
  \pgfmathtruncatemacro{\n}{7} 
    
    \foreach \x in {\minVal,...,\maxVal}
        \foreach \y in {\minVal,...,\maxVal}
            \fill[gray] (\x,\y) circle (0.05);

   \foreach \i in {\minVal,...,\maxVal} {
     \foreach \j in {\minVal,...,\maxVal} {
       \pgfmathtruncatemacro{\result}{mod(\q*\i - \j, \n)}
       \ifnum\result=0
         \filldraw[orange] (\i,\j) circle (0.15);
       \fi
    }
  }

\pgfmathtruncatemacro{\x}{0}  
\foreach \j in {0,...,\numexpr\n-1} {
   \foreach \i in {0,...,\numexpr\n}
   {
       \pgfmathtruncatemacro{\res}{mod(\q*\i - \j+\q, \n)}
       \ifnum \res= 0
           \xdef\x{\i}
           \breakforeach
         \fi
       
       \ifnum \j >0
            \ifnum \i > \numexpr\x-1
            \breakforeach
         \fi
         \fi

         \pgfmathtruncatemacro{\myv}{mod(\q*\i - \j,\n)}
         \ifnum \myv > 0
           \pgfmathtruncatemacro{\myv}{\n-\myv}
         \else
           \pgfmathtruncatemacro{\myv}{-\myv}
         \fi
         
         \ifnum \numexpr\i + \numexpr\j = 0
         \else  
         \fill[white] (\i,\j) circle (0.1);
         \fi
         \node at (\i,\j) {\myv};


  }
}

\end{tikzpicture}
\end{center}
\end{example}


%
%
%
%
%

%
%
%

\begin{rmk}\label{adfvadfar}
The lattices $\Gamma$ of orange dots for singularities ${1\over r}(1,a)$ and  ${1\over r}(1,b)$, where, as above, $b$ is the inverse of $a$
modulo $r$, are clearly symmetric with respect to the diagonal $y=x$. It follows that the algebras $R_{r,a}$ and $R_{r,b}$ are opposite algebras. This was also observed in \cite[paragraph after Prop.~6.7]{KK18}.
\end{rmk}

In Section~\ref{sFBASBAHATH}, we study deformations of the Kalck--Karmazyn algebra $R_{r,a}$ by endowing the Kawamata Lagrangian $\bK_{r,a}$ with appropriate bounding cochains and computing the endomorphism algebra in the relative Fukaya category.

An obvious way to obtain a deformed object from $\bK_{r,a}$ is to view it as an object of the relative Fukaya category $\mathcal{F}(\mathbb{T}_1, {s})$, which deforms $\mathcal{F}(\mathbb{T}_2)$, where the black puncture  becomes a compactification divisor ${s}$. This relative category has objects represented by the same Lagrangians as in $\mathcal{F}(\mathbb{T}_2)$; however, the $A_\infty$ structure is deformed due to new contributions from holomorphic polygons passing through~${s}$.

Because of the existence of  bigons with boundary on $\bK_{r,a}$ passing through $s$, this naive attempt does not yield a deformation that keeps the rank of $\mathrm{End}(\bK_{r,a})$ constant. The idea is salvaged by equipping the Lagrangian
$\bK_{r,a}$ with a bounding cochain $\mathfrak{b} \in \mathrm{CF}^1(\bK_{r,a}, \bK_{r,a})$ in $\mathcal{F}(\mathbb{T}_1, {s})$. The coefficients of $\mathfrak{b}$ and the deformation parameter $s$ must satisfy a non-trivial equation to achieve a constant rank deformation. One of our main observations is that different choices of bounding cochains 
satisfying the constant rank condition
correspond to the irreducible components of the versal deformation space of the cyclic quotient singularity ${1\over r}(1, a)$.
This idea is made precise in Conjecture~\ref{svSfgasgasrg} below.

On the algebro-geometric side, we study deformations of the algebra $R_{r,a}$ to a flat $A$-algebra $\cR = \End(\cF)$, where $\cF$ is a deformation of the Kawamata vector bundle $F$ to a deformation $\cW$ of an algebraic surface $W$ over the base $\Spec A$.
To apply mirror symmetry for genus-one fibrations, we assume that the divisor $E \subset W$ deforms to a divisor $\cE \subset \cW$ such that a general fiber $\cE_t$ of the genus-one fibration $\cE$ has one node. Starting from the divisor $E \cong E_2$ on $W$ (see Figure~\ref{zxfbxzfbzfnzdf}), we obtain the divisor $\cE_t \cong E_1$ by smoothing the black node $P$, where $W$ is singular, while retaining the orange node $Q$, where $W$ is smooth.

We show that $\cR \cong \End(\cF|_\cE)$. This suggests the following strategy: first, compute deformations of the vector bundle $F_E$ to the total space $\cE$ of the fibration. The versal deformation space $\Def_{F_E/\cE}$ is smooth over $\Spec A$ with fibers  isomorphic to $\Ext^1(F_E, F_E)$.
In contrast to deformations of the Kawamata vector bundle on the algebraic surface, the endomorphism algebra of an arbitrary  deformation of the vector bundle $F_E$  is typically not $A$-flat, since a deformed vector bundle will typically
have an endomorphism algebra of smaller dimension than $R_{r,a} = \End(F_E)$.

\begin{definition}\label{sRBASRGARGHA}
Let $\cV$ be a universal vector bundle on $\cE\times_A\Def_{F_E/\cE}$.
We consider a closed subset $\Def^0_{F_E/\cE}\subset\Def_{F_E/\cE}$
(with a natural subscheme structure) such that $p\in \Def^0_{F_E/\cE}$ if and only if
$\dim\End(\cV_p)=\dim\End(F_E)$ (the maximal possible). The family of algebras  
$\End(\cV_p)$ is a flat family of finite-dimensional algebras over  $\Def^0_{F_E/\cE}$ providing a deformation
of the Kalck--Karmazyn algebra $R_{r,a}=\End(F_E)$.
\end{definition}

\begin{rmk}
One can formulate a more general problem, which can be investigated using similar methods: given a flat family of curves $\cE$ of arithmetic genus $1$ and a vector bundle $V$ on the special fiber, investigate the locus of deformations $\cV$ of $V$ to the total space $\cE$ such that $\End(\cV)$ provides a flat deformation of $\End(V)$.
\end{rmk}

A small nuisance is that an algebraic surface $\cE$ can have a singularity (of type~$A_\ell$) at the node $P$ of the special fiber $E$, which depends on the deformation of $W$. In~Section \ref{sFBASBAHATH}, we will explain how one can pass to the deformation $\sE$ of $E$ that is smooth at $P$. Ignoring this minor difference between fibrations $\cE$ and $\sE$, we have a factorization of the map of versal deformation spaces
$$\Def_{(E \subset W)}\to\Def^0_{F_E/\sE}\to\Def_{R_{r,a}}$$
that maps a deformation of an algebraic surface to the deformation $\cF|_\cE$ of the restriction of the Kawamata vector bundle $F|_E$,
which in turn is mapped to the deformation $\cR=\End(\cF|_\cE)$ of the Kalck--Karmazyn algebra $R_{r,a}$.
The~versal deformation space $\Def_{(E \subset W)}$ has several irreducible components
indexed by $P$-resolutions of singularity ${1\over r}(1,a)$ (Koll\'ar--Shepherd-Barron correspondence \cite{KSB}).
If $(\cE\subset\cW)$ is a general deformation
within a fixed irreducible component of $\Def_{(E \subset W)}$, then the general fiber $\cR_t$ of the family of algebras
$\cR$ is
a hereditary algebra by \cite{TU22}.
In particular, irreducible components of $\Def_{(E \subset W)}$ are mapped to uniquely defined components of
$\Def_{R_{r,a}}$. However, even in the simplest examples, $\Def_{R_{r,a}}$ has many other irreducible components.
In contrast, we believe that
\begin{conj}\label{svSfgasgasrg}
The map $\Def_{(E \subset W)}\to \Def^0_{F_E/\sE}$ is an isomorphism.
In particular, these deformation spaces have the equal number of irreducible components.
\end{conj}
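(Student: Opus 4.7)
The plan is to establish the isomorphism by constructing the map at the level of deformation functors, comparing tangent spaces via the relative Fukaya category, and matching irreducible components through the Koll\'ar--Shepherd-Barron classification of P-resolutions. As a first step I would check that the map is well-defined: given an infinitesimal deformation $(\cE \subset \cW)$ of $(E \subset W)$, the Kawamata bundle $F$ deforms essentially uniquely to a bundle $\cF$ on $\cW$, with obstructions in $\Ext^2(F,F)$ controllable via the semiorthogonal decomposition $D^b(W) = \langle D^b(R_{r,a}), \cB \rangle$. The relative version of Lemma~\ref{sGSGsgsr} then gives $\End(\cF|_\sE) \cong \End(\cF) = \cR$, and the flatness of $\cR$ established in \cite{TU22} places $\cF|_\sE$ in the constant-rank locus $\Def^0_{F_E/\sE}$.

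Next, I would compare tangent spaces. The tangent space to $\Def_{(E \subset W)}$ decomposes, via a local-to-global spectral sequence, into a global piece and a local contribution at the singular point $P$, which by Altmann's theorem is described by Minkowski decompositions of the toric polygon of ${1\over r}(1,a)$. The tangent space to $\Def^0_{F_E/\sE}$ is the kernel, inside $\Ext^1_\sE(F_E, F_E)$, of a ``jump map'' that records drops in $\dim \End$. Using the relative Fukaya category $\mathcal{F}(\bT_1, s)$ and bounding cochains $\mathfrak{b} \in \mathrm{CF}^1(\bK_{r,a}, \bK_{r,a})$, the constant-rank condition becomes a system of Maurer--Cartan-type equations arising from holomorphic polygons on $\bK_{r,a}$ passing through the compactification puncture $s$. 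The core calculation is to match this system with the Altmann/Minkowski description on the nose, thereby producing an isomorphism of tangent spaces compatible with the functorial map.

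To match global components, I would invoke the Koll\'ar--Shepherd-Barron correspondence: the components of $\Def_{(E \subset W)}$ are indexed by P-resolutions of ${1\over r}(1,a)$, and \cite{TU22} identifies the generic fiber of $\cR$ along each as a specific hereditary path algebra. For each such component, the image in $\Def^0_{F_E/\sE}$ is irreducible and its generic bounding cochain must reproduce the same hereditary algebra; a dimension count combined with the tangent-level bijection then forces the map to be an isomorphism onto this image. The main obstacle is the reverse direction: proving that \emph{every} irreducible component of $\Def^0_{F_E/\sE}$ arises from a deformation of the pair, or equivalently, that every solution of the bounding-cochain equations extends to a genuine deformation of the surface. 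This requires a detailed classification of Maurer--Cartan solutions on the Kawamata Lagrangian together with an obstruction-theoretic argument for extending $\cF|_\sE$ from $\sE$ to a bundle on some $\cW$. A secondary subtlety is the passage between $\cE$ (which may acquire an $A_\ell$ singularity at $P$) and its smoothing $\sE$, which must be shown to induce a bijection on irreducible components of $\Def^0$.
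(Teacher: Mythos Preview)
The statement you are attempting to prove is labeled as a \emph{Conjecture} in the paper, not a theorem. The paper does not contain a proof; immediately after stating it, the authors write only: ``We have verified this conjecture for $r \le 32$.'' This verification is carried out by computer (see \cite{LTcode}), using the explicit description of $\Def^0_{F_E/\sE}$ from Corollary~\ref{argaergqehqeth} and comparing with the known component structure of $\Def_{(E\subset W)}$ via \cite{zuniga}. So there is no proof in the paper for you to match.

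Your outline is a reasonable sketch of what a proof might eventually look like, and you correctly identify the hard direction: showing that every irreducible component of $\Def^0_{F_E/\sE}$ comes from an actual deformation of the surface. But this is precisely the content of the conjecture, and your proposal does not supply an argument for it---you list it as ``the main obstacle'' and leave it there. The tangent-space comparison you propose (matching the bounding-cochain equations on $\bK_{r,a}$ with Altmann's Minkowski-sum description of the versal base) is an interesting idea, but it is not carried out in the paper and would itself require substantial work; the paper gives no indication that such a match has been established beyond the numerical check. In short: your proposal is a plausible plan of attack on an open problem, not a proof, and the paper makes no claim to have a proof either.
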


We have verified this conjecture for $r \le 32$.
It shows that every deformation of the Kalck--Karmazyn algebra $R_{r,a}$
corresponds to some deformation of an algebraic surface~$W$ as long as the deformation of $R_{r,a}$ is
captured by a deformation of the vector bundle $F|_E$
to the genus $1$ fibration.
Thus, our dimension reduction from $W$ to $E$ does not lose any information.
We compute the subscheme $\Def^0_{F_E/\sE}$ and the flat family of finite-dimensional algebras
$\End(\cV_p)$ over it explicitly in Corollary~\ref{argaergqehqeth}.
As mentioned above, we use mirror symmetry for the family of genus one curves $\cE$
given by the relative Fukaya category $\mathcal{F}(\bT_1, \{s\})$, where
we re-interpret the black puncture as a divisor $\{s\} \subset \bT_1$ of the $1$-punctured torus.
We~finish Section~\ref{sFBASBAHATH} with many explicit examples of the scheme $\Def^0_{F_E/\sE}$. 

Finally, in Section~\ref{adfbdfbsdns}, we compute the bounding cochain
for the Kawamata Lagrangian $\bK_{n^2,nq-1}$ of the Wahl singularity that corresponds to its $\Q$-Gorenstein smoothing.
This allows us to compute the Kawamata matrix order.
We give an explicit formula suitable for computer implementation. 

\begin{thm}\label{wFGSRGARGARE}
Let $\cR=\cR_{n^2,nq-1}$ be the matrix order that absorbs  the $\mathbb{Q}$-Gorenstein smoothing of 
a Wahl singularity ${1\over n^2}(1, na-1)$. The total space of this smoothing is a threefold terminal singularity 
${1\over n}(1,-1,a)$.
The algebra $\cR$ admits a $k[t]$-basis $\{w_i\}_{i\in\bZ_{n^2}}$ and
an embedding $\cR\hookrightarrow \Mat_n(k[t])$, which maps an element
$\sum\limits_{k\in\bZ_{n^2}} a_k w_k\in\cR$ 
(here the coefficients $a_i$, $i\in \bZ_{n^2}$, are in $k[t]$)
 to a matrix $A\in\Mat_n(k[t])$ as follows. 
If $i<j$ then
$$A_{ij}=\!\!\!\sum_{\scriptsize \begin{matrix} r=0,\ldots,n-1\cr rn\le [inq]\cr 
 [inq]+i< rn+\min\limits_{k=i+1,\ldots,j}([knq]+k)\end{matrix}} 
 \!\!\!\!\!\! t^ra_{j-i+rn}+\qquad\qquad\qquad\qquad\qquad\qquad{}$$
 $${}\qquad\qquad\qquad\qquad\qquad\qquad\sum_{\scriptsize \begin{matrix} r=1,\ldots,n\cr rn> [inq]\cr 
 [inq]+i> rn-n^2+\max\limits_{k=1,\ldots,i-1}([knq]+k)\cr
 [inq]+i> (r-1)n-n^2+\max\limits_{k=j,\ldots,n}([knq]+k)\cr
 \end{matrix}} 
 \!\!\!\!\!\! t^{r-1}a_{j-i+rn-n}
 .$$
If $i>j$ then
$$A_{ij}=-\!\!\!\sum_{\scriptsize \begin{matrix} r=1,\ldots,n\cr rn> [inq]\cr 
 [inq]+i>rn-n^2+\max\limits_{k=j,\ldots,i-1}([knq]+k)\end{matrix}} t^ra_{j-i+rn}.$$
Finally, if $i=j$ then 
$$A_{ii}=a_0-\sum_{\scriptsize \begin{matrix} r=1,\ldots,n-1\cr rn> [inq]\end{matrix}} t^ra_{rn}.$$
\end{thm}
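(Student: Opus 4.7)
The plan is to transport the computation into the Fukaya category via the mirror symmetry framework of Sections~\ref{sfbdfbdhzdtjts} and~\ref{sFBASBAHATH}, and then carry out an explicit holomorphic polygon count.

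\textbf{Step 1 (Identifying the bounding cochain).} By the framework of Section~\ref{sFBASBAHATH}, the matrix order $\cR$ is realised as $\End(\bK_{n^2,nq-1}, \mathfrak{b})$ in the relative Fukaya category $\mathcal{F}(\bT_1, \{s\})$ for an appropriate bounding cochain $\mathfrak{b} \in \mathrm{CF}^1(\bK_{n^2,nq-1}, \bK_{n^2,nq-1})$. The first task is to pin down the specific $\mathfrak{b}$ whose associated deformation is the distinguished component of $\Def^0_{F_E/\sE}$ corresponding to the $\Q$-Gorenstein smoothing under the Koll\'ar--Shepherd-Barron correspondence. I~would single this component out among those indexed by $P$-resolutions by imposing that the generic fibre of $\cR$ be Morita-equivalent to $\Mat_n(k(t))$, rather than to a non-trivial acyclic path algebra: the Wahl $\Q$-Gorenstein case is precisely the matrix-order case. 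Solving the Maurer--Cartan equation for $\mathfrak{b}$ to leading order, together with this rank condition, should uniquely determine the coefficients of $\mathfrak{b}$ in terms of the self-intersection points of $\bK_{n^2,nq-1}$.

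\textbf{Step 2 (Basis and the embedding into $\Mat_n(k[t])$).} Use the basis $\{w_i\}_{i \in \Z_{n^2}}$ of $R_{n^2,nq-1}$ furnished by Corollary~\ref{aethdthat}, and lift each $w_i$ to an element of $\cR$ by picking the corresponding self-intersection of $\bK$ in the universal cover of $\bT_1$. Over $k(t)$, the Lagrangian $\bK_{n^2,nq-1}$ splits into $n$ linear pieces (this is the geometric counterpart of the generic fibre being $\Mat_n(k(t))$): the $n\times n$ matrix assigned to $\sum_k a_k w_k$ records morphisms between these $n$ branches.

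\textbf{Step 3 (Polygon count).} To compute $A_{ij}$ one enumerates holomorphic polygons in $\bT_1$ with boundary on $\bK_{n^2,nq-1}$, deformed by the bounding cochain $\mathfrak{b}$, that begin at the $i$-th branch and end at the $j$-th branch while passing through a distinguished input intersection corresponding to $w_{j-i+rn}$ (the residue of the superscript modulo $n^2$ being forced by the shift by $rn$). The number of times such a polygon crosses the puncture $s$ equals the power $t^r$ picked up. Lifting to the universal cover, each polygon is determined by a lattice triangle whose vertices lie on translates of the self-intersection lattice $\Gamma$ from Corollary~\ref{aethdthat}; the conditions for the polygon to be immersed, convex, and to wind the correct number of times around $s$ translate directly into the arithmetic inequalities $[inq]+i < rn + \min_k([knq]+k)$ and the $\max$ analogues that appear in the theorem. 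The three cases $i<j$, $i>j$, $i=j$ correspond to polygons oriented compatibly, oppositely, and to the identity-plus-disk-bubble corrections respectively; the minus signs in the $i>j$ and $i=j$ formulas arise from the orientation reversal and from the identity-bubble cancellation forced by the Maurer--Cartan equation in Step~1.

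\textbf{Main obstacle.} The combinatorial bookkeeping in Step~3 is the crux of the proof. One must simultaneously track the winding of $\bK_{n^2,nq-1}$, the convexity and orientation constraints on immersed polygons on a torus with puncture, and the number of passages through $s$, while avoiding double-counting of polygons that differ only by deck translations. The diagonal entries $A_{ii}$ are the most delicate, because genuine identity contributions must be disentangled from small polygons enclosing $s$, and it is here that the bounding cochain chosen in Step~1 makes its most visible appearance. A useful sanity check along the way is that the resulting $\cR$ actually lands in $\Mat_n(k[t])$ and becomes the full matrix algebra after inverting $t$; this closes the loop with Step~1 and confirms that the bounding cochain truly corresponds to the $\Q$-Gorenstein component.
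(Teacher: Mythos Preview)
Your outline has the right architecture (bounding cochain on $\bK_{n^2,nq-1}$, relative Fukaya category, polygon count), but two concrete ingredients that carry the actual proof are missing.

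First, in Step~1 you propose to \emph{solve} for $\mathfrak{b}$ by imposing that the generic fibre be a matrix algebra. The paper does not do this: it writes $\mathfrak{b}$ down explicitly, namely $t_{rn}=t^r$ for $r=1,\dots,n-1$, $s=t^n$, and $t_i=0$ otherwise, and then \emph{verifies} that the deformed differential $\mathfrak{m}_1^{\mathfrak{b}}$ vanishes. (The Maurer--Cartan equation itself is vacuous here since $CF^2=0$; the real constraint is constant rank.) The verification is the ``permitted rectangles come in cancelling pairs'' argument. Your ``solve to leading order plus rank condition'' does not obviously produce this answer, and you would still owe the vanishing-of-differentials check.

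Second, and more seriously, Step~2 is where the gap lies. Saying that $\bK$ ``splits into $n$ linear pieces over $k(t)$'' is a heuristic, not a module on which $\cR$ acts. The paper makes this precise by introducing a separate auxiliary object, the \emph{Hacking Lagrangian} $\bH=\bH_{n,q}$ (a simple curve with a rank-one local system of monodromy $s^{-1}$), proving that $\Hom(\bK_{\mathfrak{b}},\bH)\cong k[t,t^{-1}]^{\oplus n}$, and then reading off the matrix entries $A_{ij}$ from the composition $\Hom(\bK_{\mathfrak{b}},\bH)\otimes\Hom(\bK_{\mathfrak{b}},\bK_{\mathfrak{b}})\to\Hom(\bK_{\mathfrak{b}},\bH)$. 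The polygons in Step~3 are then rectangles with one side on $\bH$ and the rest on $\bK$; the three cases $i>j$, $i<j$, $i=j$ correspond to west-side, east-side (two sub-cases, one picking up the $s^{-1}$ monodromy), and invisible polygons respectively. Without $\bH$ you have no canonical rank-$n$ module and no clean way to get the formulas.

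Finally, your sanity check at the end (``lands in $\Mat_n(k[t])$ and becomes full after inverting $t$'') is not how the paper closes the loop. One still has to show that $(\bK,\mathfrak{b})$ is mirror to $\cF|_\cE$ rather than to some other deformation of $F|_E$; the paper does this by identifying $\bH$ with the mirror of Hacking's exceptional bundle, and then using a non-degeneracy-of-pairing criterion to conclude $\cK'_t\cong\cH_t^{\oplus n}\cong\cF|_{\cE_t}$.
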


The formula in Theorem~\ref{wFGSRGARGARE}  appears complicated, but we will show that it encodes  simple manipulations with rectangles in $\bZ^2$. Investigating them further  reveals interesting symmetries of the order, for example the following fact.

\begin{prop}\label{dfbsbF}
The order over $\bA^1$ from  Theorem~\ref{wFGSRGARGARE} extends to the order over $\bP^1$
(with an underlying vector bundle $\cO_{\bP^1}\oplus\cO_{\bP^1}(-n)^{n^2-1}$.) such that the fiber of the order
over $\infty$ is also isomorphic to the Kalck--Karmazyn algebra $R_{n^2,nq-1}$.
\end{prop}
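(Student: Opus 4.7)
The plan is to extend $\cR$ to $\bP^1$ by rescaling its basis on the chart near $\infty$ and then to identify the fiber at $\infty$ via a combinatorial duality in the formula of Theorem~\ref{wFGSRGARGARE}.

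On the chart $U_\infty = \Spec k[s]$ with $s = 1/t$, I would introduce the basis
$$v_0 := w_0, \qquad v_i := s^n w_i \quad (1 \le i \le n^2 - 1),$$
so that the transition to the $w$-frame is $\operatorname{diag}(1, t^n, \ldots, t^n)$. This realizes the underlying $\cO_{\bP^1}$-module of the extension as $\cO_{\bP^1} \oplus \cO_{\bP^1}(-n)^{n^2-1}$. Since $w_0$ is the multiplicative identity of $\cR$, only the products $v_j v_i$ with $i, j \ge 1$ need be examined. Writing $w_j w_i = \sum_k c^k_{ji}(t)\, w_k$ in $\cR$, regularity at $s = 0$ amounts to the $t$-degree bounds $\deg c^k_{ji} \le n$ for $k \ne 0$ and $\deg c^0_{ji} \le 2n$. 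Denoting by $\rho\colon \cR \hookrightarrow \Mat_n(k[t])$ the embedding of Theorem~\ref{wFGSRGARGARE}, I would deduce these bounds from the explicit formula there: each entry of $\rho(w_k)$ has $t$-degree at most $n$ (with the bound $n$ attained only strictly below the diagonal), so entries of $\rho(w_j)\rho(w_i)$ have $t$-degree at most $2n$, and projecting back onto the basis $\{\rho(w_k)\}$ preserves the required bounds.

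To identify the fiber at $\infty$, one extracts the leading $t$-coefficients of the $c^k_{ji}$ (of degrees $n$ and $2n$ respectively) and matches them against the Kalck--Karmazyn table of Corollary~\ref{aethdthat} on the basis $\{v_k\}$. The mechanism I expect to drive the match is an involution on the index set of Theorem~\ref{wFGSRGARGARE} that swaps the summation index $r$ with $n - r$ and simultaneously reverses the orderings in the inner $\min$/$\max$ ranges. Geometrically this should correspond to the $180^\circ$ rotation of the Young diagram of Corollary~\ref{aethdthat} inside $[0, n^2] \times [0, n^2]$, sending constant-term contributions at $t = 0$ to leading-term contributions at $t = \infty$ and vice versa. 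Because the constant terms by construction recover $R_{n^2, nq-1}$, after the rescaling $v_i = s^n w_i$ the leading terms produce the same algebra in the $v$-basis.

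The main obstacle will be to make the combinatorial duality precise: for each pair $(i, j)$ with $i, j \ge 1$, I would need to exhibit an explicit bijection between the quadruples $(i, j, k, r)$ contributing top-degree terms in Theorem~\ref{wFGSRGARGARE} and the nonzero products appearing in the $v$-basis Kalck--Karmazyn multiplication. The geometric reason to expect such a bijection is that both $t = 0$ and $t = \infty$ should be nodal limits of the genus-one fibration $\sE \to \bP^1$ mirror to $\mathcal{F}(\bT_1, \{s\})$, so both fibers of the order ought to absorb the same cyclic quotient singularity ${1\over n^2}(1, nq - 1)$. Verifying this algebraically from the formula, however, requires careful bookkeeping of the conditions indexing the sums.
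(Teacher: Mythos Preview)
Your rescaling $v_i = t^{-n} w_i$ (for $i\ne 0$) is exactly the paper's $\tilde w_i = w_i/s$ with $s = t^n$, so the underlying bundle and the shape of the limit are set up correctly. The difference is in which description of $\cR$ you try to analyze.

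The paper does \emph{not} work with the matrix embedding of Theorem~\ref{wFGSRGARGARE}. It goes back to the presentation of $\cR_{\mathfrak b}$ in Corollary~\ref{argaergqehqeth}, where each structure constant $c^k_{ji}(t)$ is a sum over holomorphic polygons, and each polygon contributes a single monomial in $t$ determined by how many of the $t_{nr}=t^r$ insertions and $s=t^n$ puncture-crossings it involves. In this picture the degree bounds are visible term by term, and the only contributions of degree exactly $n$ to $c^k_{ji}$ come from the bigons/rectangles passing once through the black puncture (the ``third polygon from the top'' in Figure~\ref{sFGSFGADHADTEH}), giving $w_jw_i = s\,w_k$. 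After rescaling these become $\tilde w_j\tilde w_i = \tilde w_k$, and the identification with $R_{n^2,nq-1}$ is simply $\tilde w_i\mapsto w_{-i}$. No combinatorial bijection between matrix summands is needed.

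By contrast, your route has two soft spots. First, the step ``projecting back onto the basis $\{\rho(w_k)\}$ preserves the required bounds'' is not justified: the $\rho(w_k)$ are a $k[t]$-basis of the suborder $\cR$, not of $\Mat_n(k[t])$, and knowing that $\rho(w_j)\rho(w_i)$ has entrywise degree $\le 2n$ does not by itself bound the degrees of its coordinates in that basis. You would need additional structure (e.g.\ that each $a_k$ appears in some entry of $\rho$ with a unit constant coefficient, which you can in fact read off from the $r=0$ terms of the $i<j$ formula) to make this work. Second, the proposed $180^\circ$ rotation of the Young diagram is not the mechanism the paper finds; the relevant symmetry is the index negation $i\mapsto -i$ on $\bZ_{n^2}$, which on the polygon side swaps the triangles that miss the black puncture with the bigons that hit it. Your approach could in principle be completed, but it is substantially more laborious than reading the answer off from Corollary~\ref{argaergqehqeth}.
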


We write down Kawamata's order embedded into the matrix algebra for small values of $n$ and $q$. For $n=2$ and $q=1$, see \eqref{sfvfbdfad}.

\begin{example}[$n=3$, $q=1$]
\[\left[\small\begin{matrix}
-t^{2} a_{6}+a_{0}&
       t a_{4}+a_{1}&
       t^{2} a_{8}+t a_{5}\\
       {-t^{3} a_{8}}&
       a_0&
       t^{2} a_{7}\\
       {-t a_{1}}&
       {-t a_{2}}&
       -t^{2} a_{6}-t a_{3}+a_{0}\\
\end{matrix}\right]\]
\end{example}

\begin{example}[$n=3$, $q=2$]
\[\left[\small\begin{matrix}
a_0&
       t^{2} a_{7}+t a_{4}&
       t^{2} a_{8}\\
       {-t^{2} a_{5}}&
       -t^{2} a_{6}+a_{0}&
       t a_{4}\\
       {-t a_{1}}&
       -t^{2} a_{5}-t a_{2}&
       -t^{2} a_{6}-t a_{3}+a_{0}\\\end{matrix}\right]\]
\end{example}

\begin{example}[$n=4$, $q=1$]
\[\left[\scriptsize\begin{matrix}
-t^{3} a_{12}-t^{2} a_{8}+a_{0}&
       t a_{5}+a_{1}&
       t a_{6}+a_{2}&
       t^{3} a_{15}+t^{2} a_{11}+t a_{7}\\
       -t^{4} a_{15}-t^{3} a_{11}&
       -t^{3} a_{12}+a_{0}&
       t^{2} a_{9}+t a_{5}+a_{1}&
       t^{3} a_{14}+t^{2} a_{10}\\
       {-t^{4} a_{14}}&
       {-t^{4} a_{15}}&
       a_0&
       t^{3} a_{13}\\
       {-t a_{1}}&
       {-t a_{2}}&
       {-t a_{3}}&
       -t^{3} a_{12}-t^{2} a_{8}-t a_{4}+a_{0}\\
\end{matrix}\right]\]
\end{example}

\begin{example}[$n=4$, $q=3$]
\[\left[\scriptsize\begin{matrix}
a_0&
       t^{3} a_{13}+t^{2} a_{9}+t a_{5}&
       t^{3} a_{14}+t^{2} a_{10}&
       t^{3} a_{15}\\
       {-t^{3} a_{11}}&
       -t^{3} a_{12}+a_{0}&
       t^{2} a_{9}+t a_{5}&
       t^{2} a_{10}\\
       {-t^{2} a_{6}}&
       -t^{3} a_{11}-t^{2} a_{7}&
       -t^{3} a_{12}-t^{2} a_{8}+a_{0}&
       t a_{5}\\
       {-t a_{1}}&
       -t^{2} a_{6}-t a_{2}&
       -t^{3} a_{11}-t^{2} a_{7}-t a_{3}&
       -t^{3} a_{12}-t^{2} a_{8}-t a_{4}+a_{0}\\
\end{matrix}\right]\]
\end{example}

\begin{example}[$n=5$, $q=1$]$\ $

\noindent
\scalebox{0.77}{\parbox{\linewidth}{%
\[\left[\tiny\begin{matrix}
-t^{4} a_{20}-t^{3} a_{15}-t^{2} a_{10}+a_{0}&
       t a_{6}+a_{1}&
       t a_{7}+a_{2}&
       t a_{8}+a_{3}&
       t^{4} a_{24}+t^{3} a_{19}+t^{2} a_{14}+t a_{9}\\
       -t^{5} a_{24}-t^{4} a_{19}-t^{3} a_{14}&
       -t^{4} a_{20}-t^{3} a_{15}+a_{0}&
       t^{2} a_{11}+t a_{6}+a_{1}&
       t^{2} a_{12}+t a_{7}+a_{2}&
       t^{4} a_{23}+t^{3} a_{18}+t^{2} a_{13}\\
       -t^{5} a_{23}-t^{4} a_{18}&
       -t^{5} a_{24}-t^{4} a_{19}&
       -t^{4} a_{20}+a_{0}&
       t^{3} a_{16}+t^{2} a_{11}+t a_{6}+a_{1}&
       t^{4} a_{22}+t^{3} a_{17}\\
       {-t^{5} a_{22}}&
       {-t^{5} a_{23}}&
       {-t^{5} a_{24}}&
       a_0&
       t^{4} a_{21}\\
       {-t a_{1}}&
       {-t a_{2}}&
       {-t a_{3}}&
       {-t a_{4}}&
       -t^{4} a_{20}-t^{3} a_{15}-t^{2} a_{10}-t a_{5}+a_{0}\\
\end{matrix}\right]\]
}}
\end{example}

\begin{example}[$n=5$, $q=2$]$\ $

\noindent
\scalebox{0.8}{\parbox{\linewidth}{%
\[\left[\tiny\begin{matrix}
-t^{4} a_{20}-t^{3} a_{15}+a_{0}&
       t^{2} a_{11}+t a_{6}+a_{1}&
       t^{3} a_{17}+t^{2} a_{12}+t a_{7}&
       t^{3} a_{18}+t^{2} a_{13}+t a_{8}&
       t^{4} a_{24}+t^{3} a_{19}+t^{2} a_{14}\\
       {-t^{5} a_{24}}&
       a_0&
       t^{4} a_{21}+t^{3} a_{16}&
       t^{4} a_{22}+t^{3} a_{17}&
       t^{4} a_{23}\\
       {-t^{2} a_{8}}&
       {-t^{2} a_{9}}&
       -t^{4} a_{20}-t^{3} a_{15}-t^{2} a_{10}+a_{0}&
       t a_{6}+a_{1}&
       t a_{7}\\
       {-t^{4} a_{17}}&
       {-t^{4} a_{18}}&
       -t^{5} a_{24}-t^{4} a_{19}&
       -t^{4} a_{20}+a_{0}&
       t^{3} a_{16}\\
       {-t a_{1}}&
       {-t a_{2}}&
       -t^{2} a_{8}-t a_{3}&
       -t^{2} a_{9}-t a_{4}&
       -t^{4} a_{20}-t^{3} a_{15}-t^{2} a_{10}-t a_{5}+a_{0}\\\end{matrix}\right]\]
}}
\end{example}

\begin{example}[$n=5$, $q=3$]$\ $

\noindent
\scalebox{0.84}{\parbox{\linewidth}{%
\[\left[\tiny\begin{matrix}
-t^{4} a_{20}+a_{0}&
       t^{3} a_{16}+t^{2} a_{11}&
       t^{3} a_{17}+t^{2} a_{12}&
       t^{4} a_{23}+t^{3} a_{18}+t^{2} a_{13}&
       t^{4} a_{24}+t^{3} a_{19}\\
       -t^{3} a_{14}-t^{2} a_{9}&
       -t^{4} a_{20}-t^{3} a_{15}-t^{2} a_{10}+a_{0}&
       t a_{6}+a_{1}&
       t^{2} a_{12}+t a_{7}+a_{2}&
       t^{2} a_{13}+t a_{8}\\
       {-t^{5} a_{23}}&
       {-t^{5} a_{24}}&
       a_0&
       t^{4} a_{21}+t^{3} a_{16}+t^{2} a_{11}&
       t^{4} a_{22}\\
       {-t^{3} a_{12}}&
       {-t^{3} a_{13}}&
       {-t^{3} a_{14}}&
       -t^{4} a_{20}-t^{3} a_{15}+a_{0}&
       t^{2} a_{11}\\
       {-t a_{1}}&
       {-t a_{2}}&
       {-t a_{3}}&
       -t^{3} a_{14}-t^{2} a_{9}-t a_{4}&
       -t^{4} a_{20}-t^{3} a_{15}-t^{2} a_{10}-t a_{5}+a_{0}\\\end{matrix}\right]\]
}}
\end{example}

\begin{example}[$n=5$, $q=4$]$\ $

\noindent
\scalebox{0.79}{\parbox{\linewidth}{%
\[\left[\tiny\begin{matrix}
a_0&
       t^{4} a_{21}+t^{3} a_{16}+t^{2} a_{11}+t a_{6}&
       t^{4} a_{22}+t^{3} a_{17}+t^{2} a_{12}&
       t^{4} a_{23}+t^{3} a_{18}&
       t^{4} a_{24}\\
       {-t^{4} a_{19}}&
       -t^{4} a_{20}+a_{0}&
       t^{3} a_{16}+t^{2} a_{11}+t a_{6}&
       t^{3} a_{17}+t^{2} a_{12}&
       t^{3} a_{18}\\
       {-t^{3} a_{13}}&
       -t^{4} a_{19}-t^{3} a_{14}&
       -t^{4} a_{20}-t^{3} a_{15}+a_{0}&
       t^{2} a_{11}+t a_{6}&
       t^{2} a_{12}\\
       {-t^{2} a_{7}}&
       -t^{3} a_{13}-t^{2} a_{8}&
       -t^{4} a_{19}-t^{3} a_{14}-t^{2} a_{9}&
       -t^{4} a_{20}-t^{3} a_{15}-t^{2} a_{10}+a_{0}&
       t a_{6}\\
       {-t a_{1}}&
       -t^{2} a_{7}-t a_{2}&
       -t^{3} a_{13}-t^{2} a_{8}-t a_{3}&
       -t^{4} a_{19}-t^{3} a_{14}-t^{2} a_{9}-t a_{4}&
       -t^{4} a_{20}-t^{3} a_{15}-t^{2} a_{10}-t a_{5}+a_{0}\\
\end{matrix}\right]\]
}}
\end{example}

\subsection*{Acknowledgements}

We thank Martin Kalck, Sasha Kuznetsov, Daniil Mamaev, Evgeny Shinder,
and Giancarlo Urz\'ua
for useful discussions. 

The first author was supported by the EPSRC grant EP/W015889/1. 
The second author was supported by the NSF grant DMS-2401387.

\section{Kawamata Lagrangian}\label{sfbdfbdhzdtjts}


Fix coprime integers $0 < a < r$ and consider a cyclic quotient singularity ${1\over r}(1, a)$. We will compactify it by a projective algebraic surface $W$ with a unique singular point $P$ that contains an anticanonical divisor $E = A \cup B$, a curve of arithmetic genus $1$, such that $A \cong B \cong \bP^1$ intersect at $P$ as orbifold coordinate axes of $\bA^2 / \mu_r$ and at an additional smooth point $Q$ transversally (see Figure~\ref{zxfbxzfbzfnzdf}). There are other compactifications~\cite{TU22}, but $W$ is the most convenient one for explicit calculations.
See Remark~\ref{sfvsrgqr} below for an explicit construction of~$W$.

The projective algebraic surface $W$ carries a remarkable vector bundle $F$ defined as the maximal iterated extension of the ideal sheaf $\cO_{W}(-A)$ by itself (\cite{Kaw}). We call $F$ the Kawamata vector bundle. To wit, consider a sequence of sheaves $\{ F^i \}_{i \geq 0}$ defined iteratively as follows: First, let $F^0 = \cO_{W}(-A)$ and then construct non-trivial extensions
$0 \to \cO_{W}(-A) \to F^i \to F^{i-1} \to 0$
until we arrive at $F = F^m$ such that $\mathrm{Ext}^1(F^m, \cO_{W}(-A)) = 0$. Kawamata showed that if  a maximal iterated extension exists, then the resulting sheaf $F$ is the versal noncommutative deformation of $\cO_{W}(-A)$ in a certain sense, hence is unique. Existence of $F$ was proved in \cite{KKS}. 
Furthermore, it is known that $F$ is locally free of rank $r$ \cite[Prop.~6.7]{KKS}. 

\begin{definition}
The algebra $R_{r,a}=\mathrm{End}(F)$  is called the Kalck--Karmazyn algebra.
\end{definition}

One can reconstruct $R_{r,a}$ from the restriction of the vector bundle $F$ to $E$:

\begin{lem}\label{sGSGsgsr}
$R_{r,a}=\mathrm{End}(F)\cong\mathrm{End}(F|_E)$ via the restriction $\alpha\mapsto \alpha|_E$.
\end{lem}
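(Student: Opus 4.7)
The plan is to compare the two endomorphism algebras via the restriction short exact sequence on $W$.  Since $F$ is locally free on $W$, so is $F^\vee\otimes F$, and $F|_E$ is locally free on $E$ with a canonical identification $(F^\vee\otimes F)|_E\cong \End(F|_E)$.  Tensoring the structure sequence $0\to \cO_W(-E)\to \cO_W\to \cO_E\to 0$ with the locally free sheaf $\End(F)=F^\vee\otimes F$ yields
$$0\to \End(F)\otimes \cO_W(-E)\to \End(F)\to \End(F|_E)\to 0,$$
and the induced map $H^0(W,\End(F))\to H^0(E,\End(F|_E))$ in the associated long exact sequence is precisely the restriction map $\alpha\mapsto\alpha|_E$.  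The lemma therefore reduces to proving the two vanishings $H^i(W,\End(F)\otimes\cO_W(-E))=0$ for $i=0,1$.

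The second step is to dispose of these vanishings by exploiting the iterated extension structure of $F$.  By construction $F$ carries a filtration of length $r$ whose graded pieces are all isomorphic to $\cO_W(-A)$; dualising gives a filtration of $F^\vee$ of length $r$ with graded pieces $\cO_W(A)$.  Since both factors are locally free, the tensor product of these filtrations is a filtration of $\End(F)=F\otimes F^\vee$ of length $r^2$ in which every graded piece is isomorphic to $\cO_W(-A)\otimes\cO_W(A)=\cO_W$.  Twisting by $\cO_W(-E)$ and running the resulting cascade of long exact cohomology sequences, the two vanishings above follow from the single input
$$H^i(W,\cO_W(-E))=0,\qquad i=0,1.$$

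Finally, since $E$ is an anticanonical divisor, $\cO_W(-E)\cong\omega_W$.  The surface $W$ is a normal projective surface and hence Cohen--Macaulay, so Serre duality applies and gives $H^i(W,\omega_W)\cong H^{2-i}(W,\cO_W)^\vee$; it therefore suffices to show $H^1(W,\cO_W)=H^2(W,\cO_W)=0$.  Because $W$ has only a cyclic quotient (hence rational) singularity, any resolution $\pi\colon\widetilde W\to W$ satisfies $R^i\pi_*\cO_{\widetilde W}=0$ for $i>0$ and the Leray spectral sequence identifies $H^i(W,\cO_W)$ with $H^i(\widetilde W,\cO_{\widetilde W})$; since $W$ is a projective compactification of $\bA^2/\mu_r$, the surface $\widetilde W$ is a smooth projective rational surface, and these cohomologies vanish for $i>0$.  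The main point that requires care, and which I would verify first, is the geometric input that the explicit compactification promised in Remark~\ref{sfvsrgqr} really is a rational surface with only the one cyclic quotient singularity; once that is in hand, every subsequent step is a purely formal filtration and Serre-duality argument.
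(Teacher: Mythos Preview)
Your setup via the restriction short exact sequence and the reduction to the vanishing of $H^i(W,\End(F)\otimes\cO_W(-E))$ for $i=0,1$ is exactly what the paper does, and the identification $\cO_W(-E)\cong\omega_W$ is also correct.  The gap is in the filtration step.

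The divisor $A$ passes through the singular point of $W$ as an orbifold coordinate axis of $\bA^2/\mu_r$, so $A$ is \emph{not} Cartier (unless $r=1$).  Consequently $\cO_W(-A)$ is a reflexive rank~$1$ sheaf that is not locally free at the singular point, and the evaluation map $\cO_W(-A)\otimes\cO_W(A)\to\cO_W$ is not an isomorphism there; your claimed identification $\cO_W(-A)\otimes\cO_W(A)=\cO_W$ fails.  The same non--local-freeness also undermines the two preceding steps: dualising the filtration of $F$ to produce a filtration of $F^\vee$ with graded pieces $\cO_W(A)$ requires $\mathcal{E}xt^1(\cO_W(-A),\cO_W)=0$, which need not hold on a non-Gorenstein cyclic quotient; and forming the ``tensor product of filtrations'' with well-behaved graded pieces requires flatness of those pieces.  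So the reduction to $H^i(W,\omega_W)=0$ does not go through as written.

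The paper sidesteps all of this by applying Serre duality directly to the locally free sheaf $F^*\otimes F$: one gets $H^i(W,F^*\otimes F\otimes\omega_W)\cong \Ext^{2-i}(F,F)^*$, and then invokes the vanishing $\Ext^k(F,F)=0$ for $k=1,2$ established in \cite{KKS}.  That vanishing (especially $\Ext^1(F,F)=0$, the rigidity of $F$) is a genuinely nontrivial input, and your attempt to replace it by the rationality of $W$ via a filtration argument does not succeed because the building block $\cO_W(-A)$ is not invertible.
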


\begin{proof}
Equivalently,  we claim that 
$H^0(W, F^* \otimes F) \cong H^0(E, F^* \otimes F|_E)$ via the restriction.
Indeed, this follows from the short exact sequence 
$$0 \to F^* \otimes F(-E) \to F^* \otimes F \to F^* \otimes F|_E \to 0$$ by applying the long exact sequence of cohomology and using an isomorphism $F^* \otimes F(-E) \cong F^* \otimes F \otimes \omega_W$, Serre duality for $F^* \otimes F \otimes \omega_W$, and the formula $\Ext^k(F, F) = 0$ for $k = 1, 2$, which was proved in \cite{KKS}.
\end{proof}

By Lemma~\ref{sGSGsgsr} and homological mirror symmetry \eqref{hms2}, the Kalck-Karmazyn algebra $R_{r,a}$ is isomorphic to the endomorphism algebra $\End(\bK_{r,a})$ in the Fukaya category $\cF(\mathbb{T}_2)$.
Here, $\mathbb{T}_2$ is a symplectic torus with two punctures: a black puncture that corresponds to the singular point $P \in W$, and an orange puncture that corresponds to the singular point $Q$ of $E_2$. The latter is a smooth point of $W$ (see~ Figure~\ref{zxfbxzfbzfnzdf}).
The algebra $\End(\bK_{r,a})$ and its deformations will be studied in later sections.
The goal of this section is to compute the Lagrangian $\bK_{r,a}$ explicitly.

\begin{notation}
Let $b$ be the inverse of $a$ modulo $r$.
\end{notation}

\begin{thm}\label{argargerhwetb}
The Kawamata Lagrangian $\Bbb K_{r,a}$ is shown in  Figure~\ref{zfmbvasfhbgS} in two equivalent ways.
\begin{figure}[hbtp]
\begin{center}
\includegraphics[height=0.22\textheight]{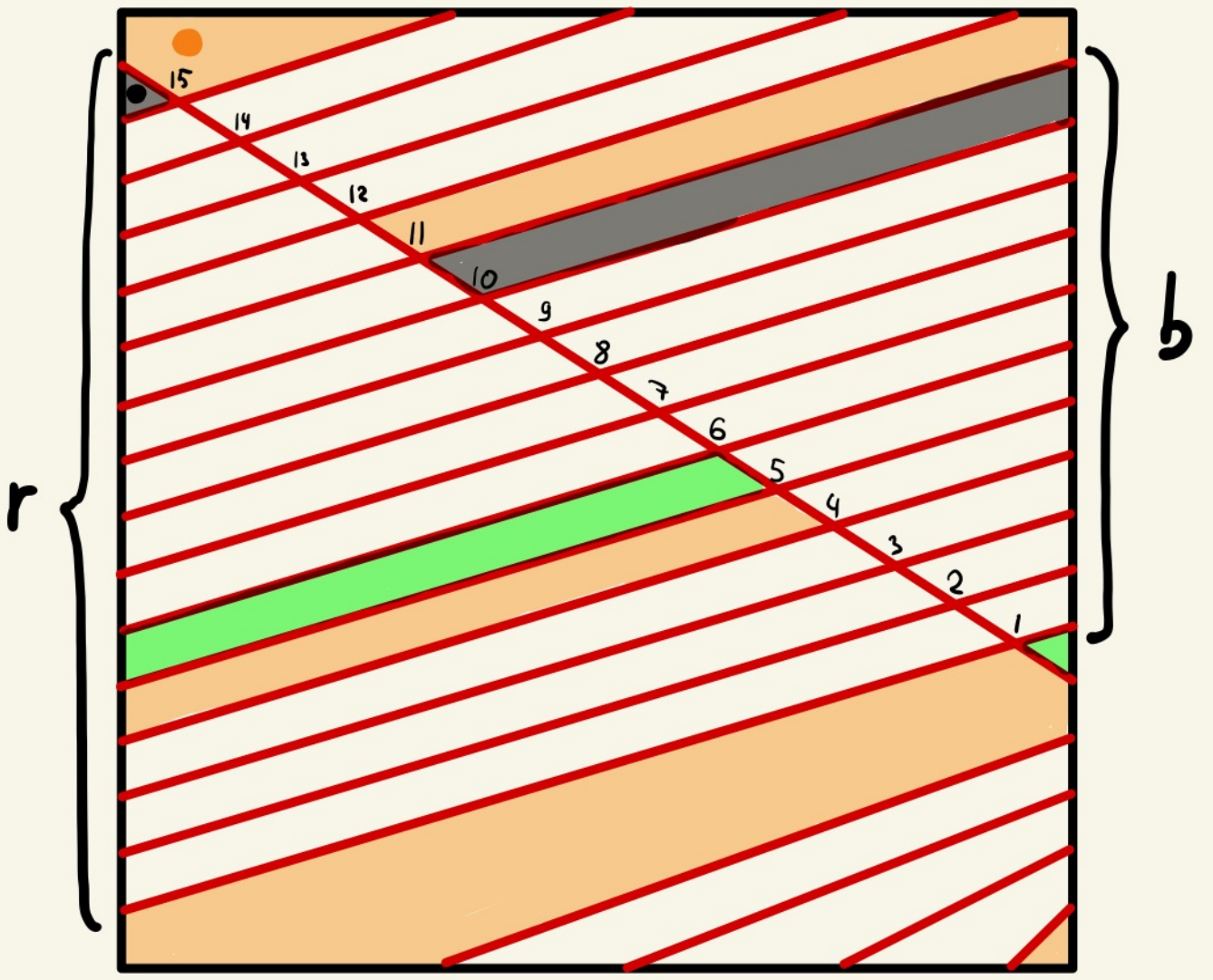}\ 
\includegraphics[height=0.22\textheight]{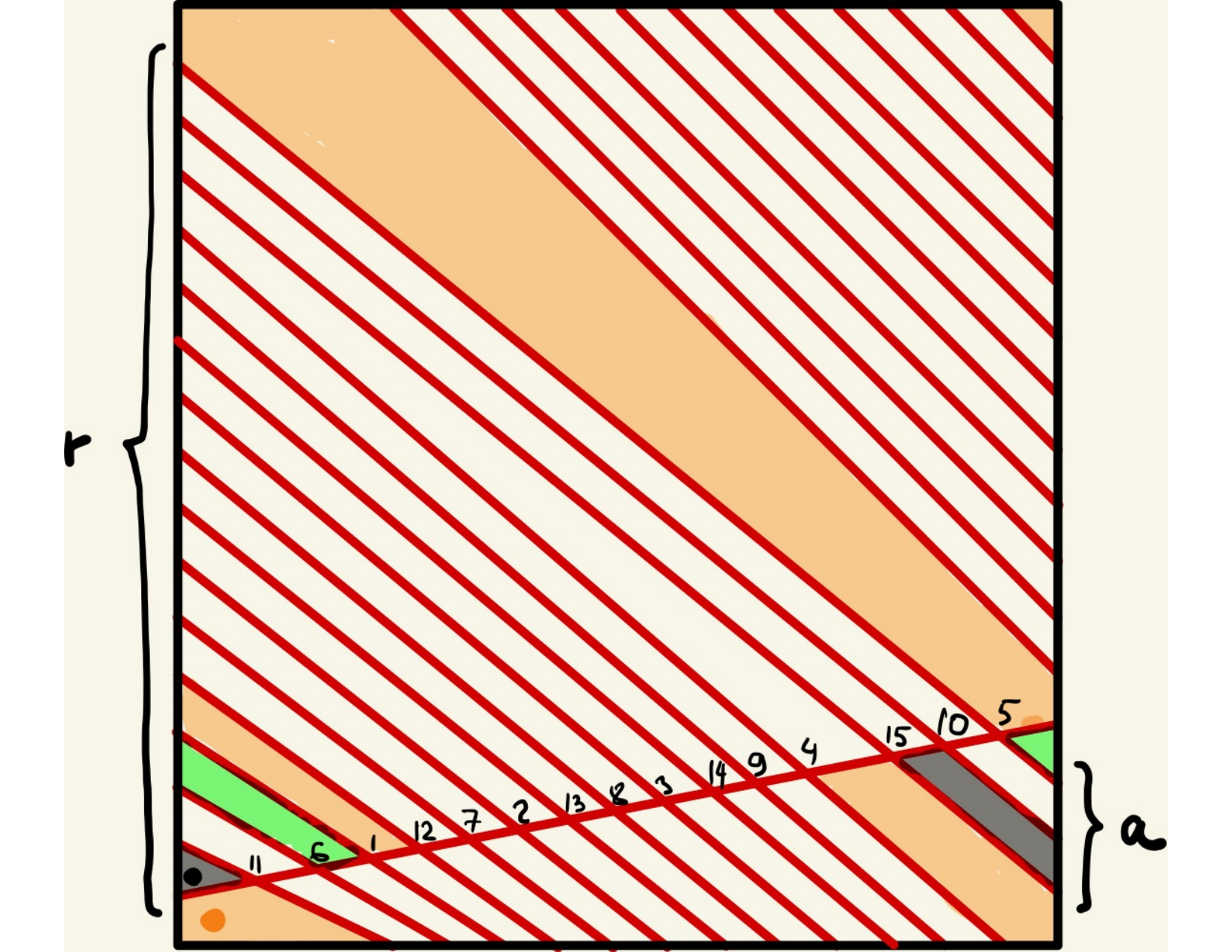}
\end{center}
\caption{Kawamata Lagrangian $\Bbb K_{r,a}$ (here $r=16$, $a=3$, $b=11$)}
\label{zfmbvasfhbgS}
\end{figure}
It has $r - 1$ self-intersection points, which we label by elements of $\Bbb Z_r\setminus\{0\}$.
 \end{thm}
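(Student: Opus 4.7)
The plan is to push $F|_E$ through the HMS equivalence $\mathcal{F}(\bT_2) \simeq \Perf(E_2)$ of \cite{LPol} and read off its mirror Lagrangian. First, I would pin down $F|_E$ as a concrete object of $\Perf(E_2)$: its splitting type on each $\bP^1$-component of $E = A \cup B$, its bi-degree, and the gluing data at the two nodes $P$ and $Q$. Since $F$ is a maximal iterated non-trivial self-extension of $\cO_W(-A)$ of length $r$, its restriction $F|_E$ is a rank-$r$ iterated self-extension of $\cL := \cO_W(-A)|_E$, and it suffices to compute $\cL$ together with the extension class. The bi-degree of $\cL$ is obtained from the orbifold intersection numbers of $-A$ with $A$ and $B$ at the $\tfrac{1}{r}(1,a)$ singular point $P$; the integer $b \equiv a^{-1} \pmod r$ enters naturally through the local uniformizers at $P$, which explains its appearance in the statement.

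Second, I would apply the explicit HMS dictionary of \cite{LPol, LPolAus}. For nodal cycles $E_n$, indecomposable vector bundles correspond to connected immersed Lagrangians in the punctured torus $\bT_n$, with rank matching the total winding number and multi-degree matching the slope data; this is the natural generalization of Atiyah's classification on the smooth elliptic curve. For the rank-$r$ bundle $F|_E$ built out of $\cL$, the dictionary produces a connected Lagrangian that winds $r$ times in one direction of $\bT_2$ and $b$ times in the other. The placement of the two punctures --- the black one coming from the node $P$ where $W$ is singular, and the orange one from the smooth node $Q$ --- then forces the isotopy class of the curve to be exactly that depicted in Figure~\ref{zfmbvasfhbgS}. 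The two panels in that figure represent the same Lagrangian drawn with different choices of fundamental domain, related by a change of coordinates on $\bT_2$.

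Third, I would count the self-intersections directly from the picture. The Lagrangian consists of $r$ nearly parallel arcs traversing the square; any two such arcs either cross exactly once or fail to cross because a puncture separates them. A short combinatorial argument, most naturally phrased in terms of the lattice $\Gamma = \ker(\gamma)$ from Corollary~\ref{aethdthat}, shows that exactly $r-1$ crossings occur, and labeling each crossing by the value of $\gamma$ at the corresponding lattice point yields the stated bijection with $\bZ_r \setminus \{0\}$. This count is consistent with $\dim \End(\bK_{r,a}) = \dim R_{r,a} = r$ from Lemma~\ref{sGSGsgsr}: the identity contributes one degree-$0$ generator, and each of the $r-1$ self-intersections contributes one additional degree-$0$ generator in $HF^0$.

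The main obstacle lies in the second step: the HMS dictionary of \cite{LPol} is most transparent for line bundles, whereas $F|_E$ is an indecomposable rank-$r$ non-split iterated extension. To track it through the equivalence, one option is an inductive construction that builds the Lagrangian by successive arc attachments mirroring the extension filtration on $F|_E$; another is to invoke an Atiyah-type classification of indecomposable vector bundles on $E_2$ matched with a classification of immersed exact Lagrangians on $\bT_2$. In either route, carefully matching the gluing data of $F|_E$ at $P$ and $Q$ with the geometric behavior of the curve near the two punctures is where the technical crux of the argument sits.
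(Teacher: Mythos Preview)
Your overall strategy --- identify $F|_E$ explicitly and push it through the HMS dictionary --- is the right shape, but you are missing the key technical device that makes the computation tractable, and one of your starting points is shakier than you realize.

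The paper does not work directly on $E_2$. Instead it passes to the minimal resolution $W^{min}$, where the exceptional chain $\Gamma_1,\ldots,\Gamma_t$ together with two further components forms a cycle $E_{t+2}$ of $t+2$ rational curves. On $W^{min}$ one has the Hille--Ploog exceptional collection of honest \emph{line bundles} $L_0,\ldots,L_t$ (equation~\eqref{abadfhadhdt}), and by \cite{KK17} the Kawamata bundle $F$ is the pushforward of a bundle $F_0$ built inductively from the $L_i$ via \emph{universal} extensions (Lemma~\ref{sfvSFbafbarn}), not as an iterated self-extension of a single sheaf. The mirror Lagrangians $\bL_i\in\mathcal F(\bT_{t+2})$ of the $L_i|_{E_{t+2}}$ are computed directly by iterated Dehn twists (Lemma~\ref{adfvadfbaba}), and universal extension translates into an explicit Lagrangian surgery (Lemma~\ref{argargarhaheht}, via \cite{Abo}). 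One performs these surgeries step by step inside $\bT_{t+2}$, and only at the very end collapses all non-orange punctures into a single black puncture to land in $\bT_2$.

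Your direct approach on $E_2$ stumbles already at step one. The sheaf $\cO_W(-A)$ is the ideal sheaf of a Weil divisor through the singular point $P$; it is reflexive of rank~$1$ but not locally free there, so your ``bi-degree from orbifold intersection numbers'' description of $\cL=\cO_W(-A)|_E$ is not well-posed as stated, and $\cL$ is not a line bundle whose mirror you can read off. More seriously, the Kawamata filtration is characterized by a maximality condition on $W$; there is no evident way to identify the restricted extension classes on $E_2$ short of already knowing $F|_E$. Your proposed fix --- ``successive arc attachments mirroring the extension filtration'' --- is exactly the right idea, but with the wrong building blocks and in the wrong ambient torus. The passage to $W^{min}$ and $E_{t+2}$ is precisely what makes it work: the building blocks become line bundles with explicitly computable mirrors, and ``universal extension'' is a canonical operation with a clean symplectic counterpart.
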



The proof occupies the rest of this section. We will use a different construction of the Kawamata bundle $F$
from \cite{KK17}, which we are going to recall.
Consider the minimal resolution $W^{min}$ of the projective surface $W$.
The preimage of the singular point  is a chain of rational curves $\Gamma_1,\ldots,\Gamma_t\subset W^{min}$ 
with self-intersection numbers $-b_1,\ldots,-b_t$ such that $b_1,\ldots,b_t\ge 2$
(see Figure~\ref{svsgwrG}.)

\begin{figure}[htbp]
\begin{center}
\includegraphics[width=0.6\textwidth]{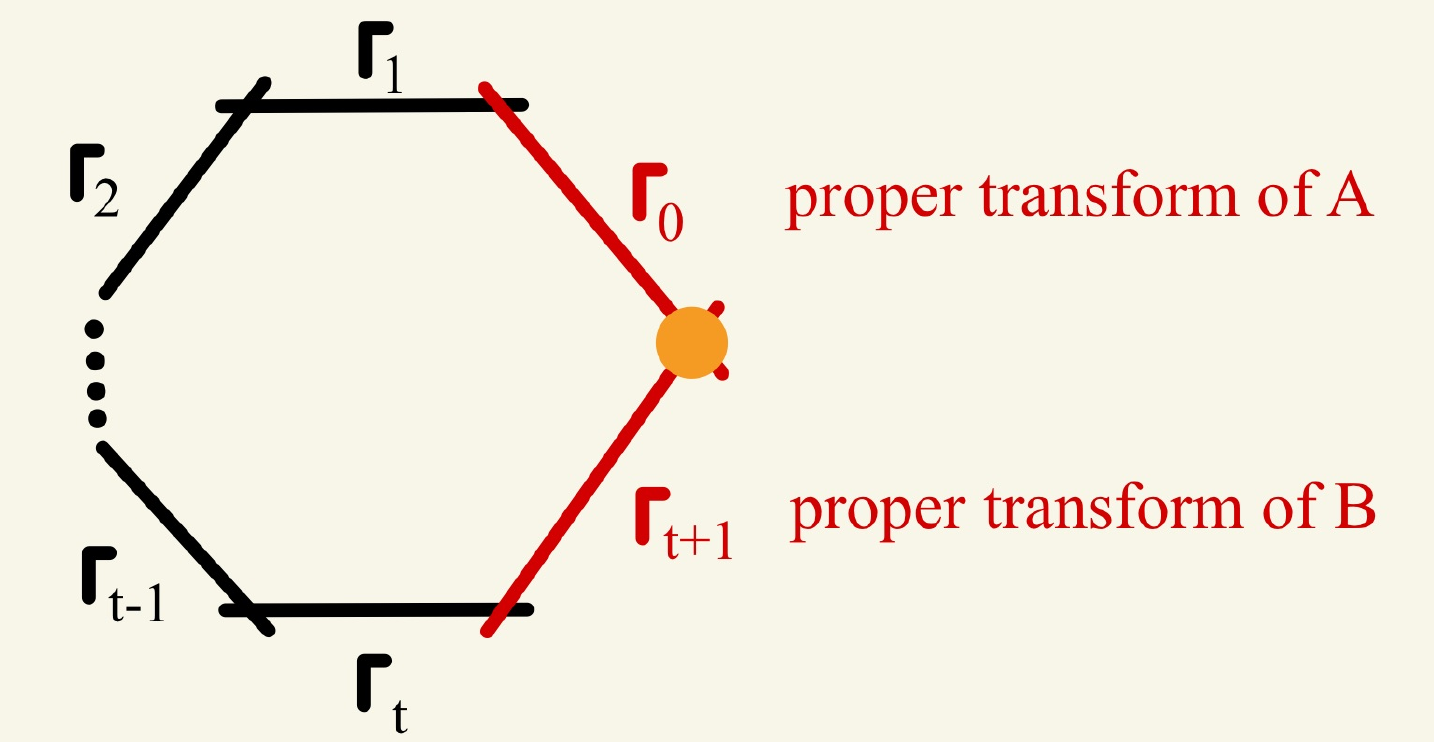}\quad
\end{center}
\caption{Minimal resolution $W^{min}$ of the projective surface $W$}\label{svsgwrG}
\end{figure}

\begin{rmk}\label{sfvsrgqr}
An explicit model of $W^{min}$ can be constructed as follows: start with a rational elliptic fibration with a $1$-nodal fiber,
blow up the node $t+1$ times to create a cycle of $t+2$ projective lines, then blow-up disjoint smooth points on $t$
of the irreducible components to create a chain $\Gamma_1,\ldots,\Gamma_t$ as above.
Contracting the chain gives a projective surface $W$ that satisfies  Assumptions~1.10 of \cite{TU22}.
\end{rmk}

\begin{definition}
There is an exceptional collection on $W^{min}$ of the line bundles
\begin{equation}\label{abadfhadhdt}
L_t=\cO(-\Gamma_0-\Gamma_1-\ldots-\Gamma_t),\ \ldots,\ 
L_1=\cO(-\Gamma_0-\Gamma_1), \ L_0=\cO(-\Gamma_0).
\end{equation}
\end{definition}

\begin{lem}\label{adfvadfbaba}
Let $E_{t+2}=\Gamma_0\cup\ldots\cup\Gamma_{t+1}\subset W^{min}$.
Let  $L$ be a line bundle  on $E_{t+2}$ of multi-degree 
$(1-\xi,0,0,\ldots,0,1)$, where $\xi=\Gamma_0^2$.
Under  mirror symmetry, the mirror Lagrangians 
$\bL_0,\ldots,\bL_{t}\in\mathcal{F}(\mathbb{T}_{t+2})$
of the line bundles $L\otimes L_0|_{E_{t+2}},\ldots,L\otimes L_{t}|_{E_{t+2}}$ are 
illustrated in  Figure~\ref{asr.kjfwkrh} (for $t=3$).
Concretely, each Lagrangian $\bL_i$ winds $b_j-2$ (resp., $b_j-1$) times between the punctures $P_{j-1}$ and $P_{j}$ for $j<i$ (resp.,~$j=i$).
We endow these Lagrangians with bounding spin structure, trivial local system and standard grading.
\begin{figure}[hbtp]
\begin{center}
\includegraphics[width=\textwidth]{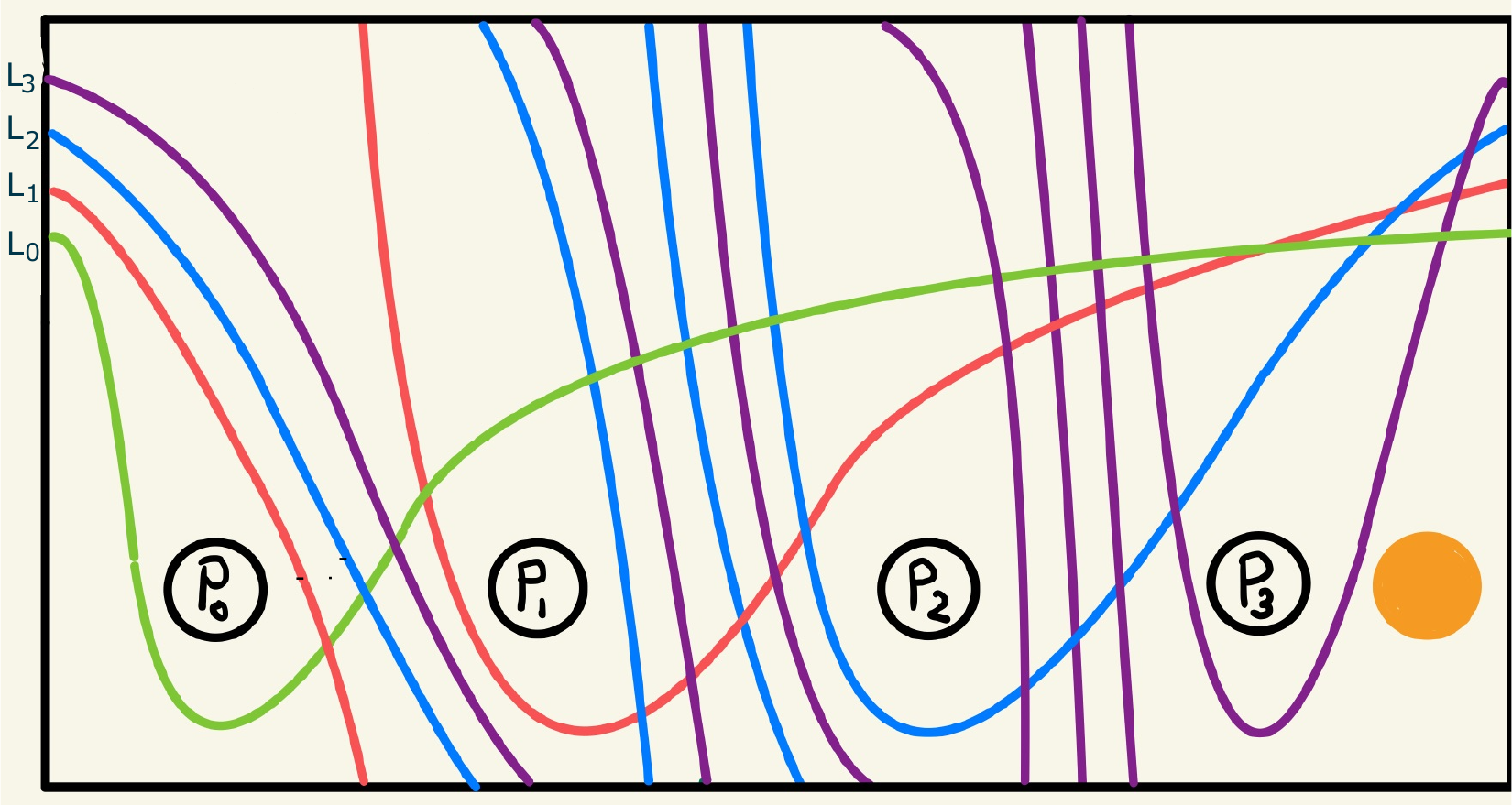}
\end{center}
\caption{Lagrangians $\bL_0,\ldots,\bL_{t+1}\in\mathcal{F}(\mathbb{T}_{t+2})$.}\label{asr.kjfwkrh}
\end{figure}
\end{lem}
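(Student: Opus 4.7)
The plan is to compute the multi-degrees of $L\otimes L_i|_{E_{t+2}}$ on each component $\Gamma_j$ of the cycle $E_{t+2}$, and then invoke the explicit homological mirror symmetry dictionary of \cite{LPol} to identify the corresponding Lagrangians in $\mathcal{F}(\bT_{t+2})$.

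For the intersection step, I would use the self-intersections $\Gamma_0^2=\xi$ and $\Gamma_j^2=-b_j$ for $1\le j\le t$, together with the cycle adjacencies $\Gamma_0\cdot\Gamma_1=\Gamma_t\cdot\Gamma_{t+1}=\Gamma_0\cdot\Gamma_{t+1}=1$ (and the other consecutive pairs in the chain). From $L_i=\cO(-\Gamma_0-\cdots-\Gamma_i)$, pairing with $\Gamma_j$ yields $L_i\cdot\Gamma_j=b_j-2$ for $1\le j<i$, $L_i\cdot\Gamma_i=b_i-1$, $L_i\cdot\Gamma_{i+1}=-1$ (when $i<t$), zero on $\Gamma_{i+2},\ldots,\Gamma_t$, and explicit $\xi$-dependent contributions on the two extremal components $\Gamma_0$ and $\Gamma_{t+1}$. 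Tensoring with $L$ of multi-degree $(1-\xi,0,\ldots,0,1)$ removes the $\xi$-dependence on $\Gamma_0$ and adjusts the contribution on $\Gamma_{t+1}$, producing on the interior components exactly the winding profile claimed in the statement. This clarifies the normalization of $L$: it is the unique adjustment that renders the degrees of the extremal components independent of $\xi$.

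The HMS equivalence $\mathcal{F}(\bT_n)\simeq\Perf(E_n)$ of \cite{LPol} provides an explicit dictionary between line bundles on $E_n$ with the standard framing and embedded closed exact Lagrangians equipped with the standard brane data (bounding spin structure, trivial local system, standard grading), under which multi-degrees translate into a winding profile around the punctures. Feeding the multi-degrees from the previous step into this dictionary produces a Lagrangian with the winding profile drawn in Figure~\ref{asr.kjfwkrh}---namely, $b_j-2$ coils between $P_{j-1}$ and $P_j$ for $j<i$, and $b_i-1$ coils between $P_{i-1}$ and $P_i$---thereby identifying $\bL_i$ with the curve in the figure.

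The main obstacle I anticipate is the careful matching of conventions between the two sides: the dictionary of \cite{LPol} involves orientation and sign choices for the punctures, so one must verify that $\Gamma_j$ corresponds to $P_j$ in the correct cyclic order and that the auxiliary twist by $L$ does not introduce an extraneous degree-zero line bundle (equivalently, a non-trivial local system) or a grading shift on the Lagrangian. A convenient sanity check is that once $\bL_0$ is correctly pinned down, each $\bL_i$ differs from $\bL_{i-1}$ by one extra coil between $P_{i-1}$ and $P_i$, mirroring the change $L_i=L_{i-1}\otimes\cO(-\Gamma_i)$ on the algebraic side; this inductive step secures the whole family once the base case is verified.
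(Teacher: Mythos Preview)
Your proposal is correct and follows essentially the same approach as the paper: compute the multi-degrees of $L\otimes L_i|_{E_{t+2}}$ on each $\Gamma_j$, then translate via HMS to winding profiles. The paper makes your ``dictionary'' step concrete by writing $L\otimes L_i|_{E_{t+2}}=\cO(-q_{i+1}+\sum_{j\le i}a_{ij}q_j)$ for chosen smooth points $q_j\in\Gamma_j$ and then invoking the correspondence between the twist functor $T_{\cO_{q_j}}$ and the Dehn twist around the vertical Lagrangian $[\cO_{q_j}]$; applying these Dehn twists iteratively to $[\cO_{E_{t+2}}]$ produces the pictured $\bL_i$. Your closing inductive sanity check (one extra coil when passing from $\bL_{i-1}$ to $\bL_i$, mirroring the tensor by $\cO(-\Gamma_i)$) is exactly this Dehn twist mechanism in disguise, so you would do well to name it explicitly rather than appeal to an off-the-shelf multi-degree-to-winding dictionary in \cite{LPol}.
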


\begin{proof}
The line bundles 
 $L\otimes L_0|_{E_{t+2}},\ldots,L\otimes L_{t}|_{E_{t+2}}$
have the following degrees on irreducible components $\Gamma_0,\ldots,\Gamma_{t+1}$:
$$(L\otimes L_i)\cdot\Gamma_j=\begin{cases}
0&\IF  j\ge i+2\\
-1&\IF j= i+1\\
b_j-1&\IF i= j\ge 1\\
b_j-2&\IF i>j\ge 1\\\
0&\IF i>j=0\\
-1&\IF i=j=0\\
\end{cases}\qquad\hbox{\rm for}\quad i=0,\ldots,t,\  j=0,\ldots,t+1.
$$
We can choose smooth points $q_i\in\Gamma_i$ for $i=0,\ldots,t+1$ 
so that 
\begin{equation}\label{wfvrfqrqr}
L\otimes L_{i}|_{E_{t+2}}=\cO(-q_{i+1}+\sum_{j\le i}a_{ij}q_j).
\end{equation}
for $i=0,\ldots,t$ and for some integer coefficients $a_{ij}$.

The mirror  of the curve $E_{t+2}$ of arithmetic genus $1$ is a torus $\mathbb{T}_{t+2}$ with $t+2$ punctures
illustrated in Figure \ref{figurehms} along with 
the mirror Lagrangians $[\mathcal{O}_{E_{t+2}}]$
and $[\mathcal{O}_{q_i}]$
corresponding to $\mathcal{O}_{E_{t+2}}$ and the skyscraper sheaves $\mathcal{O}_{q_i}$, where $q_i$ for $i=0,\ldots, t+1$ are choices of smooth points on each $\mathbb{P}^1$ component. We choose these points so that \eqref{wfvrfqrqr} holds.
Line bundles on $E_{t+2}$ of the form $\cO(\sum a_iq_i)$ are obtained by twisting $\mathcal{O}_{E_{t+2}}$ by twist functors $T_{\mathcal{O}_{q_i}}$ which on the symplectic side are given by Dehn twists around the vertical Lagrangians  $[\mathcal{O}_{q_i}]$. 
For example, a line bundle $\mathcal{O}(q_{t+1})$, which restricts to $\mathcal{O}(1)$ in the $(t+1)$-component and  to $\mathcal{O}$ on the other components fits in to an exact sequence
$0 \to \mathcal{O} \to \mathcal{O}(q_{t+1}) \to \mathcal{O}_{q_{t+1}} \to 0$.\break
The corresponding Lagrangian $[\mathcal{O}(q_{t+1})]$
is obtained by twisting the Lagrangian $[\mathcal{O}]$ around the Lagrangian $[\mathcal{O}_{q_{t+1}}]$ by a right handed Dehn twist as shown on the right side of Figure \ref{figurehms}. 
We use formula \eqref{wfvrfqrqr} and apply
 Dehn twists repeatedly to construct Lagrangians
$\bL_0,\ldots,\bL_{t}$ of the lemma.
\end{proof}

\begin{figure}[htb!]
\centering
\begin{tikzpicture} [scale=0.95]
      \tikzset{->-/.style={decoration={ markings,
        mark=at position #1 with {\arrow[scale=2,>=stealth]{>}}},postaction={decorate}}}
  \draw (0,0) -- (5,0);
   \draw (0,0) -- (6,0);
         \draw[red, ->-=.5]  (0,0.2) -- (6,0.2);
         \draw [red, ->-=.5] (0,4) -- (0,0);
         \draw [red, ->-=.5] (4,4) -- (4,0);
         \draw [red, ->-=.5] (5,4) -- (5,0);
         
         \draw (0,4) -- (6,4);
         \draw (6,0) -- (6,4);

\node at (-0.4,3.5)   {\footnotesize $[\mathcal{O}_{q_0}]$};
         \node at (3.6,3.5)   {\footnotesize $[\mathcal{O}_{q_t}]$};
         \node at (4.5,3.5)   {\footnotesize $[\mathcal{O}_{q_{t+1}}]$};
         \node at (3,0.6)   {\footnotesize $[\mathcal{O}_{E_{t+2}}]$};
         
\draw[red, thick, fill=red] (1.7,3) circle(.02);
\draw[red, thick, fill=red] (2.4,3) circle(.02);
\draw[red, thick, fill=red] (3.1,3) circle(.02);

\draw[black, thick, fill=black] (1.8,1.5) circle(.01);
\draw[black, thick, fill=black] (2.2,1.5) circle(.01);
\draw[black, thick, fill=black] (2.6,1.5) circle(.01);

\draw[thick] (0.5,1.5) circle(0.2cm);
\draw[thick] (3.5,1.5) circle(0.2cm);
\draw[thick] (4.5,1.5) circle(0.2cm);
\draw[thick] (5.5,1.5) circle(0.2cm);


\end{tikzpicture}\quad
\begin{tikzpicture} [scale=0.95]
      \tikzset{->-/.style={decoration={ markings,
        mark=at position #1 with {\arrow[scale=2,>=stealth]{>}}},postaction={decorate}}}
        \draw (0,0) -- (6,0);
        \draw (0,0) -- (0,4);
        
         \draw[red]  (0,0.2) -- (4.75,0.2);
         \draw [red] (5.2,0.2) -- (6,0.2);
         \draw [red] (4.75,0.2) -- (4.8,0);
         \draw [red, ->-=.5] (4.8,4) -- (5.2,0.2);
                 
         \draw (0,4) -- (6,4);
         \draw (6,0) -- (6,4);
       
\node at (4.1,3.5)   {\footnotesize $[\mathcal{O}({q_{t+1}})]$};

\draw[black, thick, fill=black] (1.8,1.5) circle(.01);
\draw[black, thick, fill=black] (2.2,1.5) circle(.01);
\draw[black, thick, fill=black] (2.6,1.5) circle(.01);

\draw[thick] (0.5,1.5) circle(0.2cm);
\draw[thick] (3.5,1.5) circle(0.2cm);
\draw[thick] (4.5,1.5) circle(0.2cm);
\draw[thick] (5.5,1.5) circle(0.2cm);


\end{tikzpicture}

\caption{Punctured torus $\mathbb{T}_{t+2}$ with oriented Lagrangians corresponding to various perfect sheaves on $E_{t+2}$.}
\label{figurehms}
\end{figure}


\begin{proof}[Proof of Theorem~\ref{argargerhwetb}]
By  \cite{KK17} (based on results of \cite{HillePloog}), the Kawamata vector bundle on $W$ is isomorphic to the push-forward
of a certain vector bundle $F_0$ on $W^{min}$, which  
is the first term in the sequence of vector bundles 
$F_0,\ldots,F_t$ on $W^{min}$.\break 
Concretely,  $F_t=L_t$ and, for $i=0,\ldots,t-1$, the bundle $F_i$ is the universal extension of $F_{i+1}$ by $L_i$,
i.e.~we have a short exact sequence
\begin{equation}\label{sGASGASRGASRG}
0\to{\Ext}^1(F_{i+1},L_i)^*\otimes L_i\to F_i\to F_{i+1}\to0.
\end{equation}

\begin{lem}[\cite{KK17}]\label{sfvSFbafbarn}
We have 
$\dim{\Ext}^1(F_{i+1},L_i)=\rk F_i-\rk F_{i+1}$  and
$$
{\rk F_i\over\rk F_{i+1}}=b_{i+1}-{1\over b_{i+2}-{1\over\ldots-{1\over b_t}}},
$$ 
where $\rk F_i$ and $\rk F_{i+1}$ are coprime,
$-b_1,\ldots,-b_t$ are self-intersections of the exceptional 
curves in the minimal resolution $W^{min}$ of $W$,
and  
$$
{r\over r-a}=b_1-{1\over b_{2}-{1\over\ldots-{1\over b_t}}}.
$$ 
\end{lem}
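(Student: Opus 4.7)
The plan is to proceed by descending induction on $i$, starting from the base case $F_t = L_t$ (which has rank $1$) and reading off the recursion directly from the defining sequences \eqref{sGASGASRGASRG}. Taking ranks in \eqref{sGASGASRGASRG} gives
$$\rk F_i \;=\; \rk F_{i+1} \;+\; \dim \Ext^1(F_{i+1}, L_i),$$
so the entire statement reduces to showing that
$$\dim\Ext^1(F_{i+1},L_i) \;=\; (b_{i+1}-1)\rk F_{i+1}-\rk F_{i+2},$$
with the convention $\rk F_{t+1}=0$. Granting this, the resulting three-term recursion $\rk F_i = b_{i+1}\rk F_{i+1}-\rk F_{i+2}$ is exactly the Euclidean recursion underlying the Hirzebruch--Jung continued fraction $[b_{i+1},b_{i+2},\ldots,b_t]$, which therefore equals $\rk F_i/\rk F_{i+1}$. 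Coprimality then follows because $\gcd(\rk F_i,\rk F_{i+1})=\gcd(\rk F_{i+1},\rk F_{i+2})$, with the base case $\gcd(\rk F_{t-1},\rk F_t)=\gcd(b_t\cdot 1,1)=1$.

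For the Ext computation itself, I would filter $F_{i+1}$ by line bundles via iterated application of \eqref{sGASGASRGASRG} to $F_{i+1},F_{i+2},\ldots,F_t$, whose associated graded pieces are direct sums of the $L_j$ for $j>i$. Applying $\Hom(-,L_i)$ to each sequence reduces the problem to computing $\Ext^*(L_j,L_i)$. By the definition $L_k=\cO(-\Gamma_0-\cdots-\Gamma_k)$ and Serre-duality-free cohomology on the smooth surface $W^{min}$,
$$\Ext^*(L_j,L_i)\;=\;H^*\bigl(W^{min},\,\cO(\Gamma_{i+1}+\cdots+\Gamma_j)\bigr)\qquad(j>i).$$
Since $\Gamma_{i+1}\cup\cdots\cup\Gamma_j$ is a connected negative-definite chain of rational $(-b_k)$-curves with each $b_k\ge 2$, an ascending induction on the chain length (using the ideal sheaf sequences $0\to\cO(D-\Gamma_k)\to\cO(D)\to\cO_{\Gamma_k}(D)\to 0$) shows that $H^0$ vanishes and that $H^1$ produces precisely the combinatorial count that realizes the Hirzebruch--Jung recursion; the dimension of $\Ext^1(F_{i+1},L_i)$ inherits the same recursion and matches $(b_{i+1}-1)\rk F_{i+1}-\rk F_{i+2}$. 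The exceptional-collection property of $\{L_0,\ldots,L_t\}$ on $W^{min}$ (inherent in the Hille--Ploog/Kalck--Karmazyn setup) ensures that all auxiliary $\Ext$-groups that would obstruct this clean extraction vanish.

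The last identity $r/(r-a)=[b_1,\ldots,b_t]$ is the classical description of the self-intersections of the minimal resolution of the cyclic quotient singularity $\tfrac{1}{r}(1,a)$: if one contracts an initial $(-b_1)$-curve one gets the singularity $\tfrac{r-a}{r}$-type correction, giving the Euclidean continued fraction $r/(r-a)$ rather than $r/a$ because of the choice of orbifold coordinate on $\Gamma_0$. This is standard and can be cited.

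I expect the main obstacle to be the bookkeeping in the Ext computation: propagating the recursion through a filtration of $F_{i+1}$ of length $t-i$ while controlling both $\Ext^0$ and $\Ext^1$ simultaneously requires either an inductive cohomology argument on chains or a mutation-theoretic argument using the exceptional collection. In the paper being discussed, however, the statement is credited to \cite{KK17}, so rather than reproving it from scratch, the most economical route is to cite the rank and continued-fraction formulas from \emph{loc.~cit.} and verify only that the ranks stabilize to the desired $r$ and $r-a$ through the classical Hirzebruch--Jung expansion.
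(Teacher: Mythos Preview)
The paper does not prove this lemma at all: it is stated with the attribution \cite{KK17} and used as a black box. So your closing remark --- that the most economical route is simply to cite the rank and continued-fraction formulas from \emph{loc.\ cit.} --- is exactly what the paper does, and that part of your proposal is correct and sufficient.

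Your attempted sketch of an independent proof, however, contains a genuine error. You claim that for $j>i$ the cohomology $H^0\bigl(W^{\min},\cO(\Gamma_{i+1}+\cdots+\Gamma_j)\bigr)$ vanishes. It does not: the divisor $\Gamma_{i+1}+\cdots+\Gamma_j$ is effective, so $h^0\ge 1$ (in fact $h^0=1$, since a negative-definite chain is rigid). Consequently $\Hom(L_j,L_i)\cong k$ is nonzero for every $j>i$, and the long exact sequences you obtain by filtering $F_{i+1}$ and applying $\Hom(-,L_i)$ have nontrivial degree-$0$ terms whose connecting maps into $\Ext^1$ must be analyzed. Your sentence ``the exceptional-collection property \ldots\ ensures that all auxiliary $\Ext$-groups that would obstruct this clean extraction vanish'' is exactly where the real work hides: one needs either the universal-extension property $\Ext^1(F_i,L_i)=0$ propagated inductively, or the mutation calculus for the Hille--Ploog collection, to control these interactions. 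The three-term recursion $\rk F_i=b_{i+1}\rk F_{i+1}-\rk F_{i+2}$ and the resulting continued-fraction identity are correct once the $\Ext^1$ dimension is established, but your route to that dimension is incomplete as written.
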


Let $\bF_i$ be the Lagrangian in $\mathbb{T}_{t+2}$
corresponding to the vector bundle $F_i|_{E_{t+2}}\otimes L$ 
(here $L$ is a line bundle from  Lemma~\ref{adfvadfbaba}).
We investigate  the sequence of Lagrangians $\bF_0,\ldots,\bF_t$ inductively.
Since \eqref{abadfhadhdt} is an exceptional collection,
an argument similar to the proof of Lemma~\ref{sGSGsgsr} shows that 
$$\Ext\nolimits^j(F_{i+1},L_i)\cong\Ext\nolimits^j(F_{i+1}|_{E_{t+2}},L_i|_{E_{t+2}})$$ 
for every $j$. In particular, $F_i|_{E_{t+2}}$ is determined inductively by an exact sequence
\begin{equation}\label{ssdfasrgasrgsrg}
0\to\Ext\nolimits^1(F_{i+1}|_{E_{t+2}},L_i|_{E_{t+2}})^*\otimes L_i|_{E_{t+2}}\to F_i|_{E_{t+2}}\to F_{i+1}|_{E_{t+2}}\to0.
\end{equation}
Tensoring \eqref{ssdfasrgasrgsrg} with a line bundle $L$ of Lemma~\ref{adfvadfbaba}
and applying  mirror symmetry, shows that the Lagrangian $\bF_i$ is determined recursively by
the exact triangle
${\Ext}^1(\bF_{i+1},\bL_i)^*\otimes \bL_i\to \bF_i\to \bF_{i+1}$ in $\mathcal{F}(\mathbb{T}_{t+2})$.

\begin{lem}\label{argargarhaheht}
The Lagrangian $\bF_i$ can be constructed as follows:
repeat the Lagrangian $\bL_i$ as many times as the dimension of $\Ext^1(F_{i+1}, L_i)$ (computed in Lemma~\ref{sfvSFbafbarn}).
Then perform the surgery illustrated at the top of  Figure~\ref{dbarhathaetj}, 
\begin{figure}[htbp]
\begin{center}
\includegraphics[width=0.5\textwidth]{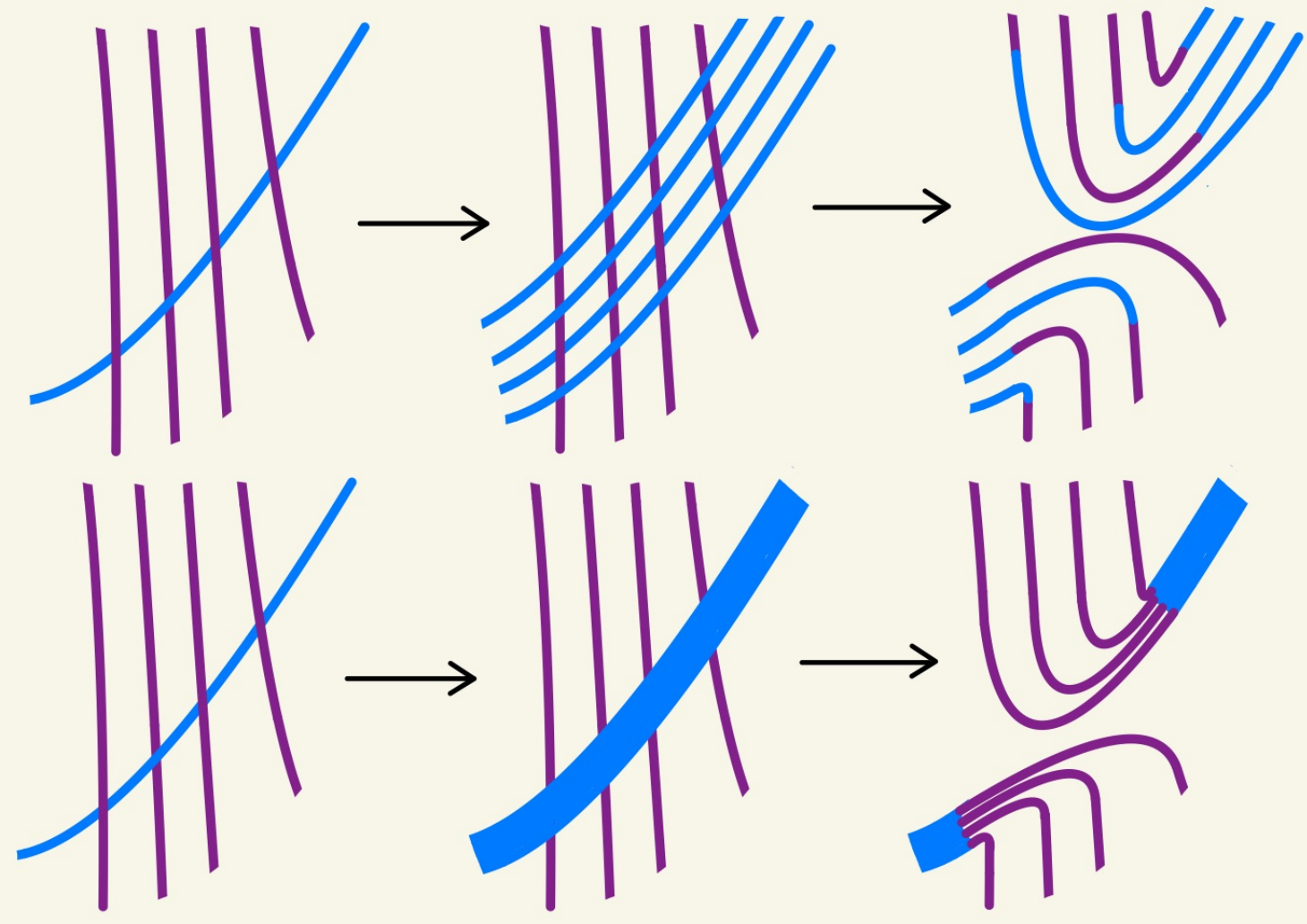}
\end{center}
\caption{Surgery on Lagrangians}\label{dbarhathaetj}
\end{figure}
where the Lagrangian $\bL_i$
is in blue and the Lagrangian  $\bF_{i+1}$ is in magenta.
By aesthetic reasons, instead of repeating the Lagrangian $\bL_i$, we  instead 
draw a thick ``band'' of curves
as at the bottom of  Figure~\ref{dbarhathaetj}.
\end{lem}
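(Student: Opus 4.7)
The plan is to proceed by descending induction on $i$, starting from the base case $i=t$. At the base, $F_t = L_t$ by definition, so $\bF_t$ is (after tensoring with the auxiliary line bundle $L$) a single copy of $\bL_t$; no surgery is performed and the assertion is trivial.

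For the inductive step, I would apply the mirror equivalence \eqref{hms2} to the exact sequence \eqref{ssdfasrgasrgsrg} tensored with $L$ to obtain the exact triangle
$$\Ext^1(\bF_{i+1},\bL_i)^*\otimes \bL_i\to \bF_i\to \bF_{i+1}$$
in $\mathcal{F}(\mathbb{T}_{t+2})$. Setting $d=\dim\Ext^1(F_{i+1},L_i)$, which equals $\rk F_i-\rk F_{i+1}$ by Lemma~\ref{sfvSFbafbarn}, we see that $\bF_i$ is the mapping cone of a morphism from $\bL_i^{\oplus d}$ to $\bF_{i+1}$ built from the canonical co-evaluation. The next step is to invoke the standard dictionary between mapping cones in the Fukaya category of a punctured surface and Lagrangian surgery: if two exact Lagrangians $\alpha,\beta$ meet transversely at points of Maslov index $0$, then the cone of any nonzero degree-$0$ morphism $\alpha\to\beta$ is represented by the Lagrangian obtained by smoothing the corresponding intersection, with surgery orientation determined by the grading (see \cite{LPol,Seidelbook}, and Abouzaid's interpretation of the surgery exact triangle).

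Then I would verify by inspection of the recursive picture (comparing Figures~\ref{asr.kjfwkrh} and \ref{zfmbvasfhbgS}) that $\bL_i$ meets $\bF_{i+1}$ transversely in exactly $d$ degree-zero points, all lying in the ``slot'' between punctures $P_i$ and $P_{i+1}$ where $\bL_i$ winds one more time than $\bL_{i+1}$. Simultaneously smoothing these $d$ intersections (using a morphism whose components at the $d$ intersection points form a basis of the evaluation space) produces precisely the picture at the top of Figure~\ref{dbarhathaetj}: $d$ parallel strands of $\bL_i$ each glued into $\bF_{i+1}$ by a surgery handle, which is then drawn schematically as the thick band in the bottom of the figure. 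Matching the brane data (bounding spin structure, trivial local system, and standard grading) under surgery is automatic in this setting because each $\bL_j$ carries such data and surgery respects it.

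The main obstacle is the intersection count: one must check that $\bL_i\cap\bF_{i+1}$ has exactly $\rk F_i-\rk F_{i+1}$ points of the correct degree. I would handle this by using the inductive ``band'' description of $\bF_{i+1}$ to reduce the count to counting intersections of $\bL_i$ with each $\bL_j$ for $j>i$, then summing the resulting geometric intersection numbers against the multiplicities coming from the previous surgeries and comparing with the continued-fraction identity $\rk F_i/\rk F_{i+1}=b_{i+1}-1/(b_{i+2}-\cdots)$. A secondary but manageable point is to ensure that the degree-zero part of $\hom(\bL_i,\bF_{i+1})$ has dimension exactly $d$ (so that higher-degree intersections do not contribute), which follows from the vanishing $\Ext^{\ge 2}(F_{i+1},L_i)=0$ on $E_{t+2}$ implied by the exceptionality of the collection \eqref{abadfhadhdt}.
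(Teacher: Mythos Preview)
Your approach is correct and essentially the same as the paper's: the entire proof in the paper is the single line ``This follows from \cite[Lemma~5.4]{Abo},'' which is precisely the surgery/cone dictionary you invoke under the heading ``Abouzaid's interpretation of the surgery exact triangle.'' The exact triangle you derive is already established in the paragraph immediately preceding the lemma, so the lemma itself is only the claim that surgery realizes the cone, and Abouzaid's result is quoted for that. Your additional programme of verifying the intersection count via the continued-fraction identity and checking the degree-zero condition from exceptionality of the collection is more than the paper supplies; it is not wrong, but the authors are content to let the figure and the citation carry that weight.
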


\begin{proof}
This follows from \cite[Lemma 5.4]{Abo}.
\end{proof}

To simplify the analysis, we now restrict to the case 
$t=3$ 
but the algorithm is the same for any $t$. 
We~start with $\bF_3=\bL_3$, draw $\rk\bF_2-\rk\bF_3=b_3-1$ copies of the ``blue'' Lagrangian 
$\bL_2$ and first construct and then simplify the Lagrangian $\bF_2$ 
using Lemma~\ref{argargarhaheht} as illustrated in Figure~\ref{sdvsVsgsrGB}.
\begin{figure}[hbtp]
\begin{center}
\includegraphics[width=0.45\textwidth]{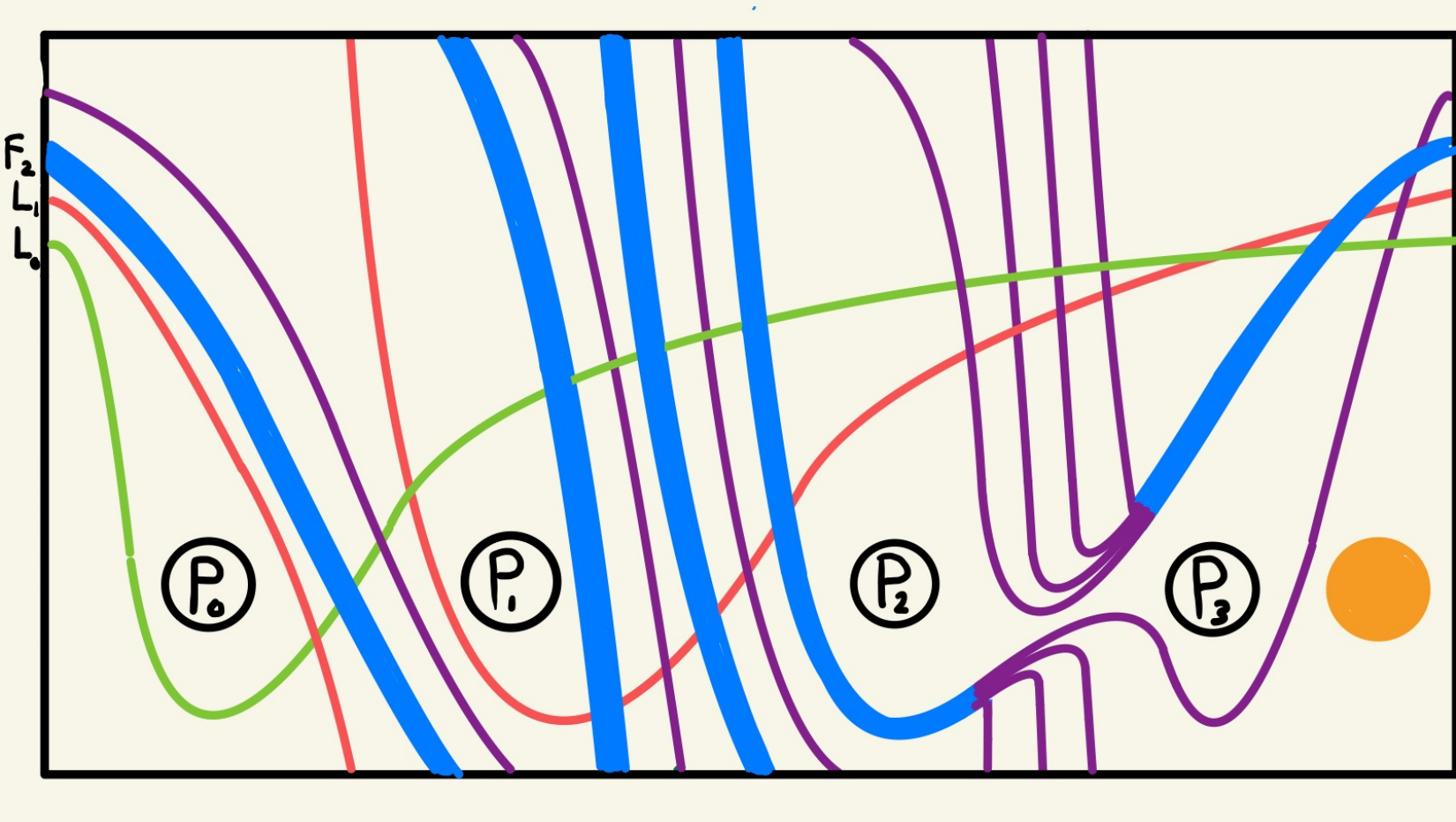}\qquad
\includegraphics[width=0.45\textwidth]{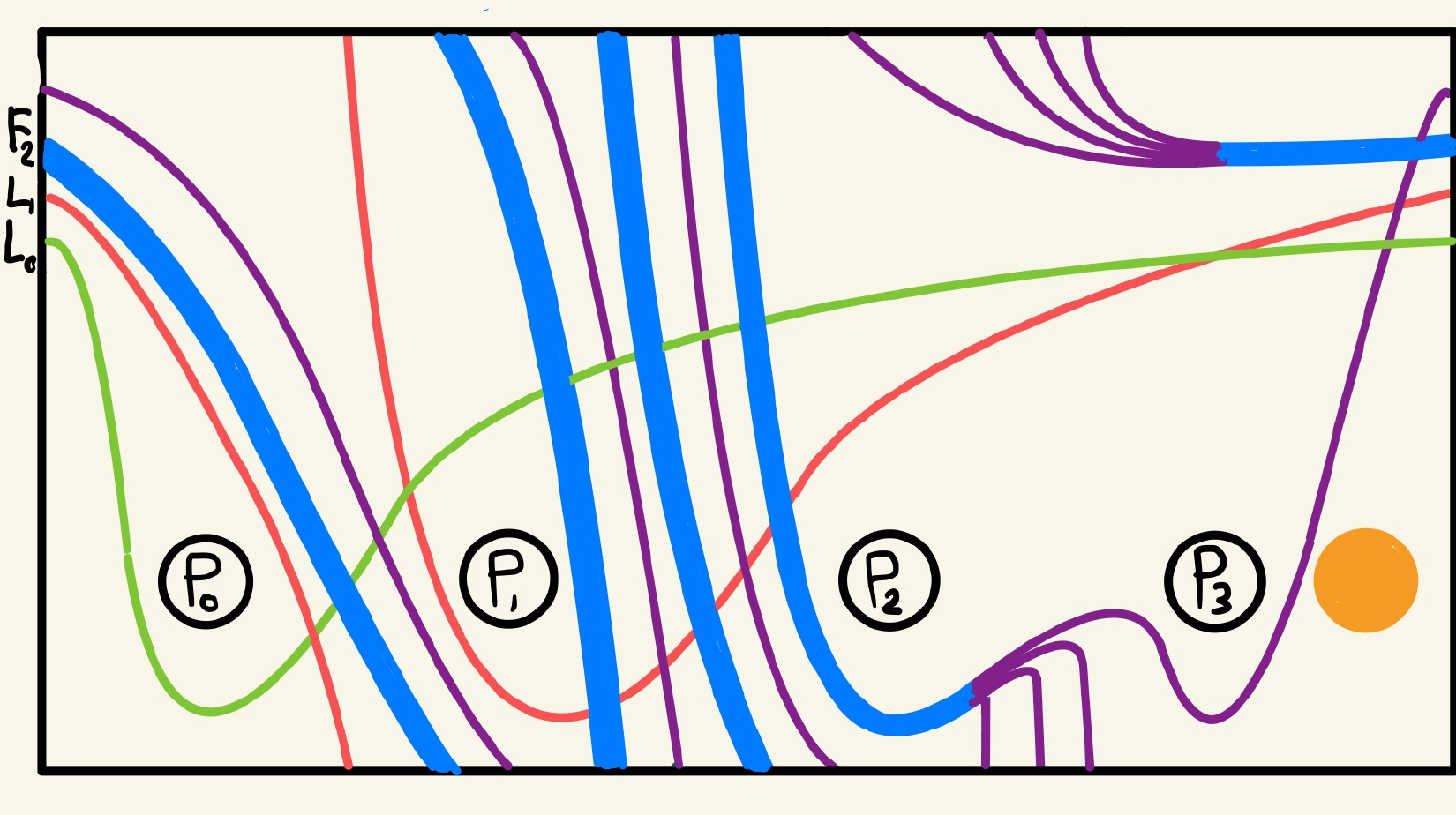}
\end{center}
\caption{The mirror Lagrangian $\bF_2$ of the vector bundle $F_2$.}\label{sdvsVsgsrGB}
\end{figure}
Next, we draw $\rk\bF_1-\rk\bF_2= b_2b_3-b_3-1$ copies of the ``red'' Lagrangian 
$\bL_1$ and 
construct (and simplify) the Lagrangian $\bF_1$ illustrated in Figure~\ref{egaerhaerh} (left side).
\begin{figure}[hbtp]
\begin{center}
\includegraphics[width=0.45\textwidth]{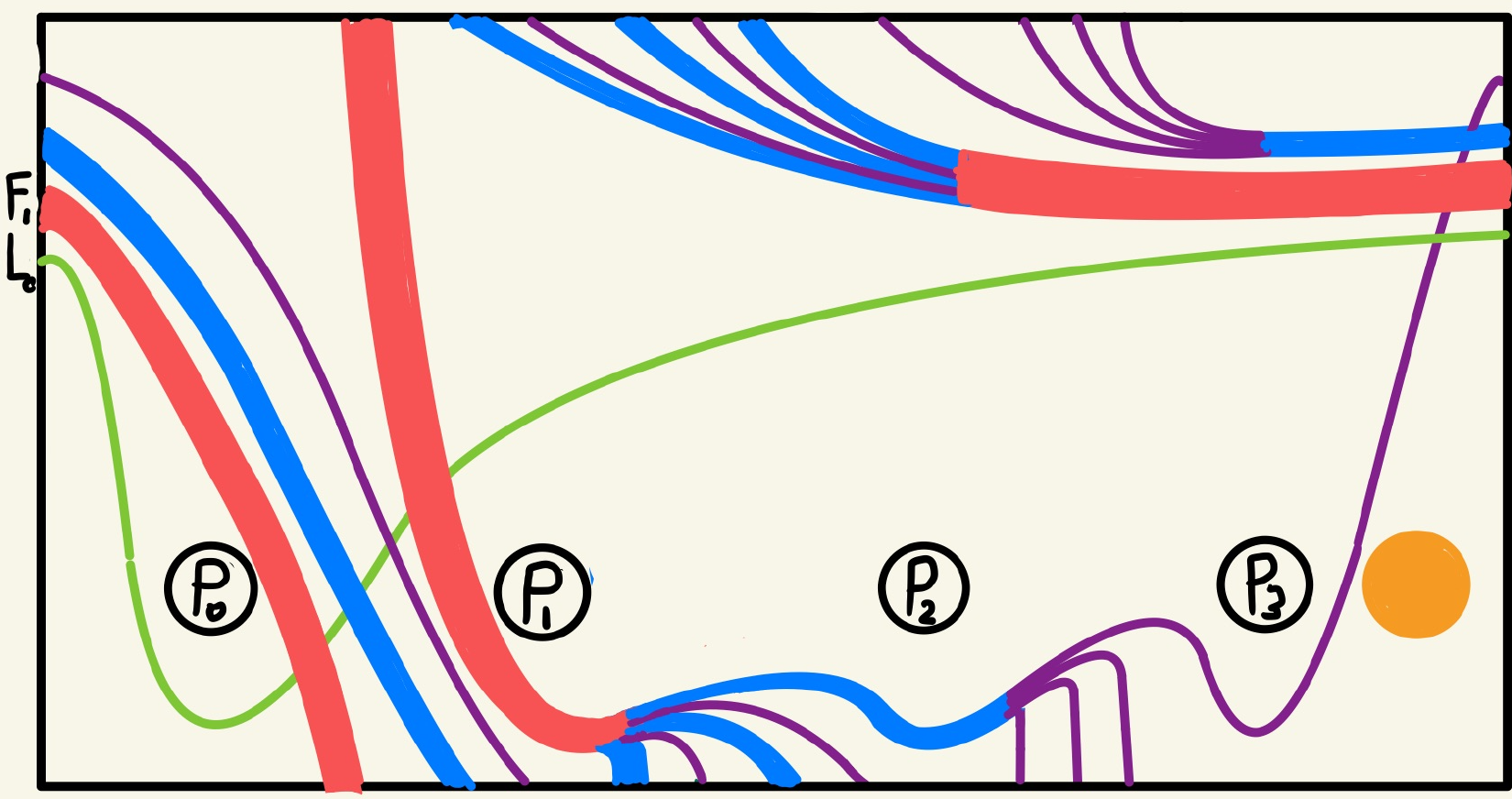}\quad\quad
\includegraphics[width=0.45\textwidth]{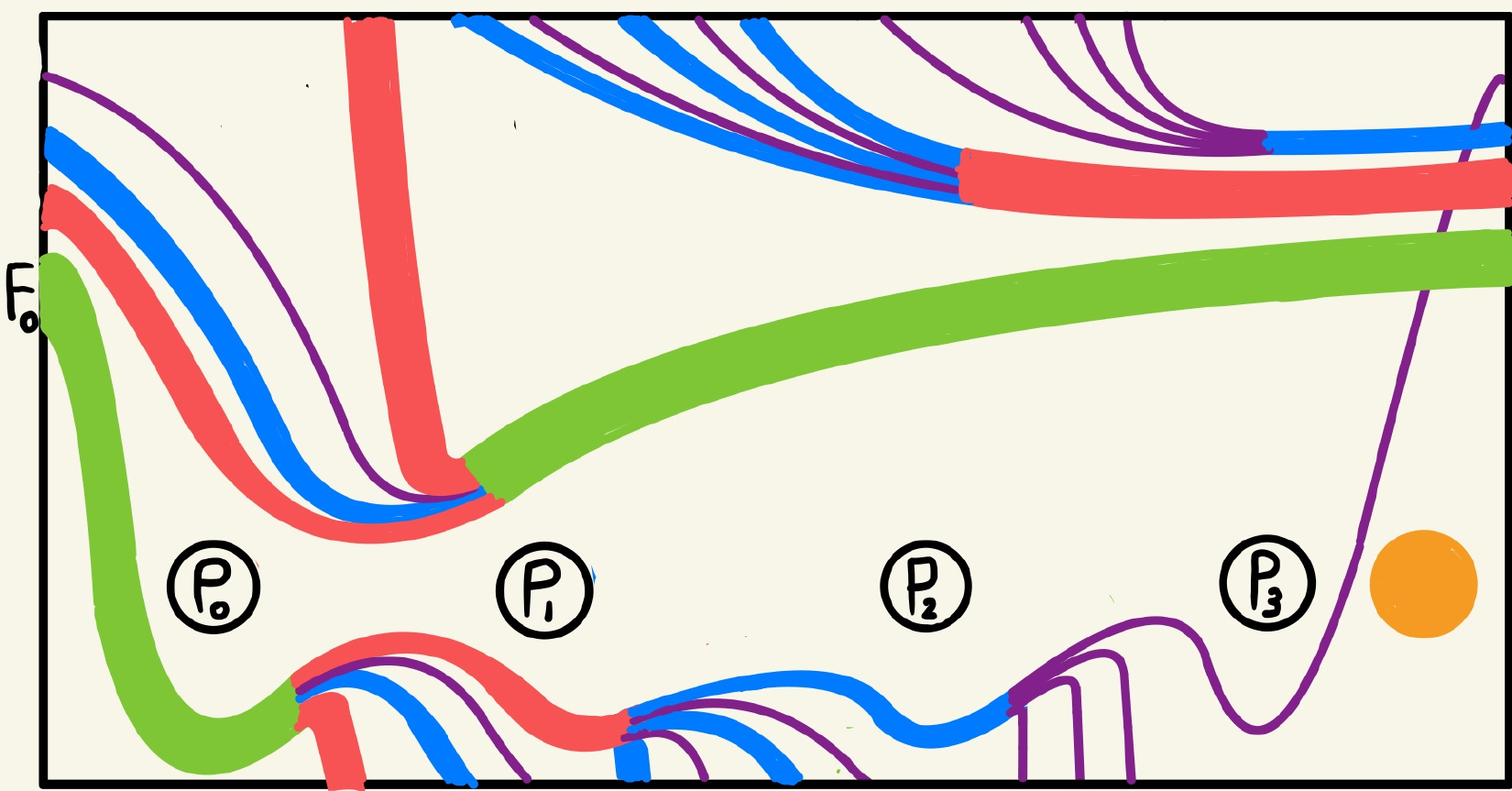}
\end{center}
\caption{The Lagrangians  $\bF_1$ (left) and $\bF_0$ (right.)}\label{egaerhaerh}
\end{figure}
Finally, we draw $\rk\bF_0-\rk\bF_1=b_1b_2b_3-b_2b_3-b_1-b_3+1$ copies of the ``green'' Lagrangian 
$\bL_0$ and  
construct (and simplify) the Lagrangian $\bF_0$ illustrated in Figure~\ref{egaerhaerh} (right side).


It remains to observe that the vector bundle $F_0|_{E_{t+2}}$ is a pull-back of the Kawamata vector bundle $F|_E$. Under mirror symmetry, this means that the Lagrangian $\bK_{r,a}$ on the two-punctured torus that corresponds to the Kawamata vector bundle $F|_E$ (tensored by $L$) is obtained from the Lagrangian $\bF_0$ by combining all non-orange punctures into one black puncture. Slightly simplifying Figure~\ref{egaerhaerh} shows that this Lagrangian $\bK_{r,a}$ is the same as the one from the right side of Figure~\ref{zfmbvasfhbgS}. 
\end{proof}

The following corollary is straightforward.

\begin{cor}
The complement of $\Bbb K_{r,a}$ in the torus is the union of rectangular (in other words, four sided)  regions, except for one hexagonal region (which contains an orange puncture) and two triangular regions (one of which contains a black puncture).\break
In Figure~\ref{asrgargqre} we draw the region formed by combining the hexagonal and triangular regions\footnote{
See Definition~\ref{sFBAFBADFNAD} for an explanation of the markings $q$ and $e$.}, the Gauss word
formed by following the Lagrangian until all
 self-intersection points are counted twice, and the universal cover of the torus with the preimage of $\bK_{r,a}$.
 \end{cor}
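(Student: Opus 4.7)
The plan is to combine a simple Euler characteristic count with direct inspection of the explicit picture of $\bK_{r,a}$ produced in the proof of Theorem~\ref{argargerhwetb}. Since $\bK_{r,a}$ is the image of an immersion $S^1\to\bT_2$ with $r-1$ transverse self-intersection points, its complement is the set of $2$-cells of a $4$-valent CW structure on the torus with $V = r-1$ vertices and $E = 2(r-1)$ edges (each vertex meets four edge-ends). The Euler characteristic of the torus forces the number of faces to be $F = E - V = r - 1$. If $f_k$ denotes the number of faces with $k$ sides, then $\sum_k k\, f_k = 2E = 4(r-1)$, so the profile ``one hexagon, two triangles, rest quadrilaterals'' is perfectly consistent: $6 + 6 + 4(r-4) = 4(r-1)$.

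Next, I would identify the non-rectangular regions directly from the explicit picture of $\bK_{r,a}$ obtained in Figures~\ref{egaerhaerh} and~\ref{zfmbvasfhbgS}. By construction, $\bK_{r,a}$ is assembled from parallel ``bands'' of strands glued by the local surgery template of Figure~\ref{dbarhathaetj}. Between two consecutive parallel strands of the same band, every bounded region is automatically a quadrilateral. The exceptional regions can therefore only appear where different bands interact, i.e.~at the sites of the surgery moves, or where the Lagrangian bounds a region containing a puncture. Inspecting the simplified picture in Figure~\ref{zfmbvasfhbgS}~(right) exhibits exactly three such spots: a hexagonal region containing the orange puncture, and two triangular regions, one of which contains the black puncture.

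To make the counting rigorous rather than merely pictorial, I would trace the regions inductively through the construction of $\bF_0,\ldots,\bF_t$. At each surgery step in Lemma~\ref{argargarhaheht}, the local model of Figure~\ref{dbarhathaetj} modifies the face structure in a controlled way; by bookkeeping how many triangular, quadrilateral, and hexagonal regions exist after producing each $\bF_i$, and combining with the known total face count $F = r-1$, one forces the final configuration to have exactly one hexagon, two triangles, and $r-4$ quadrilaterals. The placement of the orange and black punctures relative to the surgery sites then determines which of the special regions contains which puncture.

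The main obstacle, and indeed the only genuinely nontrivial part, is checking that the pictorial simplifications performed between Figures~\ref{sdvsVsgsrGB} and~\ref{egaerhaerh} preserve the combinatorial type of every region (apart from removing the intended bigons). This requires a local verification around each surgery site to ensure that no hidden small region is accidentally created or collapsed during these moves. Once this local check is in hand, the statement follows from the Euler identity $6 + 6 + 4(r-4) = 4(r-1)$ together with the inspection described above.
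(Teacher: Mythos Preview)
The paper gives no proof at all; it simply declares the corollary ``straightforward'' from the explicit picture of $\bK_{r,a}$ in Figure~\ref{zfmbvasfhbgS}. Your proposal is therefore strictly more detailed than what the paper offers. Your Euler-characteristic computation ($V=r-1$, $E=2(r-1)$, $F=r-1$, and $6+2\cdot 3+4(r-4)=4(r-1)$) is correct and is a genuinely useful consistency check that the paper does not bother to record; as you rightly note, it only shows the profile is \emph{possible}, not that it is forced. Your plan to identify the three exceptional regions by direct inspection is exactly what the paper implicitly relies on: the ``grid-like structure'' of $\bK_{r,a}$ (which the paper later invokes explicitly in the proof of Theorem~\ref{sGsgrsherheq}) makes all interior regions quadrilaterals, and the hexagon and the two triangles appear precisely where the grid bends around the two punctures. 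The inductive bookkeeping through the surgeries of Lemma~\ref{argargarhaheht} that you outline would make this rigorous, and the ``obstacle'' you flag --- that the pictorial simplifications between Figures~\ref{sdvsVsgsrGB} and~\ref{egaerhaerh} preserve the face combinatorics --- is something the paper simply takes for granted. In short, your argument is correct and already exceeds the paper's level of rigor.
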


\begin{figure}[htbp]
\begin{center}
\includegraphics[height=0.26\textheight]{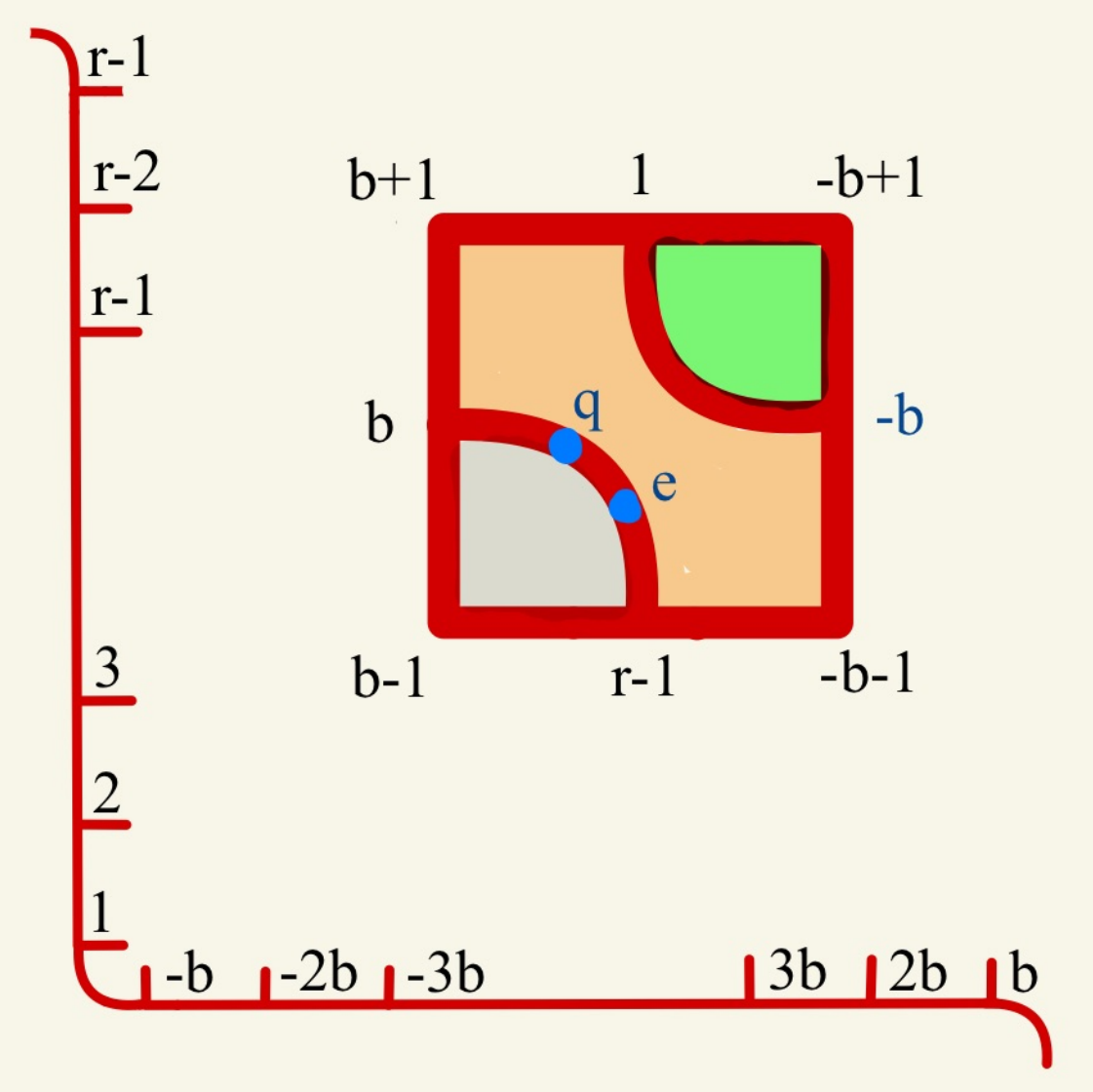}\qquad
\includegraphics[height=0.26\textheight]{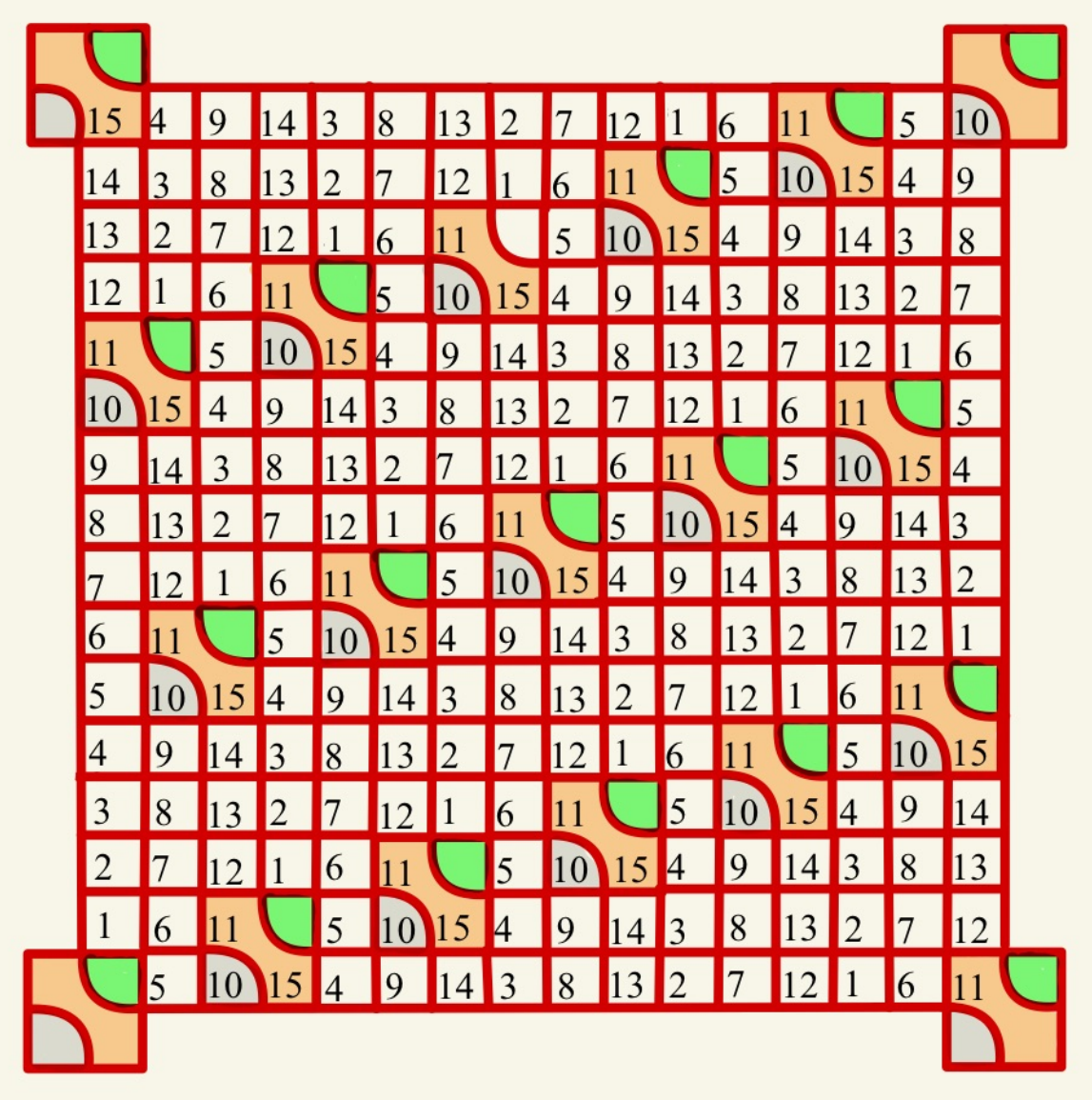}
\end{center}
\caption{The union of non-rectangular regions,
the Gauss word, and the universal cover  (for $r=16$, $a=3$, $b=11$.)}
\label{asrgargqre}
\end{figure}

\section{Deformations of  Lagrangians and their endomorphism rings}\label{sFBASBAHATH}

As in Section~\ref{sfbdfbdhzdtjts}, let $F_E$
be the restriction of the Kawamata 
vector bundle $F$ to the  divisor $E\cong E_2$ of the projective algebraic surface $W$ with 
a cyclic quotient singularity ${1\over r}(1,a)$.
In Theorem~\ref{zfmbvasfhbgS},
we found the mirror Lagrangian $\bK_{r,a}\in \cF(\bT_2)$ of~$F_E$.
We will study  the endomorphism algebra $R_{r,a}$ of $\bK_{r,a}$ and its deformations.

The motivation comes from the study of flat deformations $(\cE\subset\cW)$ of the pair $(E\subset W)$ over  a smooth curve germ $\Spec A$. Let $t\in A$ be a local parameter.\break
We~assume that a general fiber $\cW_t$ is a smooth projective surface but its anti-canonical divisor $\cE_t$ 
has one~node. From the  divisor $E\cong E_2$ of $W$ (see Figure~\ref{zxfbxzfbzfnzdf}), 
we obtain  the  divisor $\cE_t\cong E_1$ of $\cW_t$ by smoothening the black node $P\in E$ but retaining  the orange node~$Q\in E$. 

As~explained in \cite{TU22}, the Kawamata vector bundle $F$ on $W$ deforms  uniquely to a vector bundle $\cF$ on $\cW$ and  the Kalck--Karmazyn algebra $R_{r,a}=\End(F)$ deforms to a $A$-algebra $\cR=\End(\cF)$,
which is a free $A$-module of rank $r$. The $A$-algebra $\cR$ depends on the deformation of $(E\subset W)$.
Concretely, the~versal deformation space $\Def_{(E \subset W)}$ has several irreducible components, 
which are all smooth and classified in \cite{KSB} (Koll\'ar--Shepherd-Barron correspondence).\footnote{These results are usually stated for the versal deformation space $\Def_{W}$
of all deformations of the surface $W$, which generally do not induce a deformation of the anticanonical divisor $E \subset W$. But an extension of these results to deformations of the pair $(E \subset W)$ is  well-known, see e.g.~\cite[Lemma~3.2]{TU22}.}
If $(\cE\subset\cW)$ is a general deformation  
within a fixed irreducible component of $\Def_{(E \subset W)}$, then the general fiber $\cR_t$ of the family of algebras 
$\cR$ is
 a hereditary algebra by \cite{TU22}
 (equivalently, $\cR_t$ is Morita-equivalent
to a path algebra of a  quiver without relations). 

We would like to compute the $A$-algebra $\cR$ explicitly.
Our idea is to use the following formula, which can be proved in the same way as Lemma~\ref{sGSGsgsr}:
\begin{equation}
\cR=\End(\cF)\cong\End(\cF|_\cE).
\end{equation}
In this section we focus on computing the closed subscheme $\Def^0_{F_E/\cE}\subset\Def_{F_E/\cE}$
(see Definition~\ref{sRBASRGARGHA}) and the flat family of finite-dimensional algebras  $\End(\cV_p)$ 
over  it that provides a deformation of the Kalck--Karmazyn algebra $R_{r,a}=\End(F_E)$.

\begin{rmk}\label{sfasfbgadrh}
A minor nuisance is that an algebraic surface $\cE$ can have a singularity at 
the  node $P$ 
of the special fiber $E$
of type $A_{\ell-1}$,  $\ell\ge 1$.
We have a finite base change cartesian diagram 

\begin{equation}\begin{CD}
 \cE @>>> \sE \\
@VVV    @VVV\\
\Spec A @>>> \Spec B\\
\end{CD}
\end{equation}
where $B\subset A$ is a subring with local parameter $s=t^\ell$
and $\sE$ is a versal deformation of $P\in E$ (equisingular at $Q\in E$.)
The total space of $\sE$ is smooth at $P$.
Since $\Def^0_{F_E/\cE}\subset\Def_{F_E/\cE}$ is a base change of 
$\Def^0_{F_E/\sE}\subset\Def_{F_E/\sE}$, in this section we will do all calculations on $\sE$
and worry about the finite base change later.
\end{rmk}

\begin{rmk}
Note that irreducible components of $\Def^0_{F_E/\sE}$ are not necessarily smooth over $\Spec B$.
So $1$-parameter deformations of $R_{r,a}$ contained in these components are not necessarily parametrized by $s\in B$ but may require a finite base change (such as $t\in A$).
\end{rmk}

In the remainder of this section, we will describe 
the closed subscheme $\Def^0_{F_E/\sE}$ and 
 the family of algebras $\End(\cV_p)$ over it
using homological mirror symmetry for the family of genus one curves~$\sE$. The answer is given in Corollary~\ref{argaergqehqeth}.

Recall that that the mirror of $F_E$ is an immersed oriented Lagrangian $\bK_{r,a}$ 
on a symplectic torus $\bT_2$ with two punctures (orange and black)  equipped 
with brane data (bounding spin structure, trivial local system, and standard grading).\break 
In the previous section, we worked with the exact Fukaya category of $\bT_2$ as defined in \cite{Seidelbook}, which provides a mirror for $E_2$.
In this section, we re-interpret the black puncture as a divisor $\{s\} \subset \bT_1$ 
on a one-punctured torus. The computations will take place in the relative exact Fukaya category $\mathcal{F}(\bT_1, \{s\})$, which provides a mirror for the family of genus $1$ curves $\sE \to \Spec B$. Indeed, \cite{LPol} establishes mirror symmetry for the Tate family of curves $T_2 \to \mathrm{Spec}\, \mathbb{Z}[[t_1,t_2]]$, which is the total space of the versal formal deformations of the special fiber $E_2$ (see \cite[Section 2]{LPol}). 
The main result \cite[Theorem~A]{LPol} establishes a quasi-equivalence 
$\mathcal{F}(\mathbb{T}, \{s,o\} ) \simeq \mathrm{Perf}\, T_2$
over $\mathbb{Z}[[t_1,t_2]]$, 
where the left-hand-side is the  split-closed derived   Fukaya category of the compact torus $\mathbb{T}$ relative to compactification divisor given by 2 points $\{s,o\}$.  Note that $\mathbb{T}\setminus \{s, o \} = \mathbb{T}_2$. In the relative Fukaya category $t_1$ (resp. $t_2$) is a formal parameter keeping track of the intersection number of holomorphic polygons with $s$ (resp. $o$). The family of genus 1 curves $\sE \to \Spec B$ corresponds to the subfamily  $(t_2=0)$, where the curve $E_2$  deforms to a curve $E_1$ by smoothing a black node $P$ and retaining an orange node $Q$ in Figure~\ref{zxfbxzfbzfnzdf}. The techniques used in the proof of Theorem \cite[Theorem A]{LPol} apply directly in this case to give the 
 quasi-equivalence over $B$,
\begin{equation} \mathcal{F}(\mathbb{T}_1, \{s\}) \simeq \mathrm{Perf}\, \mathscr{E}. \end{equation}
Here $\mathbb{T}_1$ is once-punctured torus,  $D= \{s \}$ (formerly known as a black puncture) is a divisor with respect to which we study the relative Fukaya category and $\mathscr{E} \to \Spec B$ is the family of nodal curves where the special fiber is the $E_2$ and general fiber is $E_1$, 
the nodal rational curve. 
The $A_\infty$-category $ \mathcal{F}(\mathbb{T}_1, \{s\})$ is $B$-linear. 
The~$A_\infty$-operations are given by counting holomorphic polygons with boundaries on Lagrangians, but the contribution of each polygon $u$ comes with a weight $s^{\mathrm{mult}(u, \{s\})}$. (See \cite{LPol} for more background on relative Fukaya categories.) 

We can view the Kawamata Lagrangian $\bK:=\bK_{r,a}$ as an object of $\mathcal{F}(\bT_1, \{s\})$, 
which is a mirror of some deformation of the Kawamata vector bundle $F_E$
to a vector bundle on~$\sE$.
We start by computing the $A_\infty$-algebra $(\mathscr{A}_{\bK}, \{\mathfrak{m}_i\}_{i \geq 1})$ of endomorphisms of $\mathbb{K}$ as an object in $\mathcal{F}(\mathbb{T}_1,\{s\})$. 
We follow the sign conventions as given in \cite[Ch. 1]{Seidelbook}. Recall that an $A_\infty$-algebra $\mathscr{A}$ over a commutative ring $B$ is a $\f{Z}$-graded $B$-module with a collection of $B$-linear maps $\mathfrak{m}_i : \mathscr{A}^{\otimes i} \to \mathscr{A}[2-i]$ for $i \geq 1$, where the notation $\mathscr{A}[2-i]$ means that $\mathfrak{m}_{i}$ lowers the degree by $i - 2$. These~maps are required to satisfy the $A_\infty$-relations:
\[ \sum_{j,k} (-1)^{|a_1|+\ldots + |a_j|-j} \mathfrak{m}_{i-k+1}(a_i, \ldots, a_{j+k+1}, \mathfrak{m}_k (a_{j+k}, \ldots, a_{j+1}), a_j, \ldots, a_1) = 0. \]
The cohomology with respect to $\mathfrak{m}_1$ is an associative algebra with the product:
\[ a_2 a_1 = (-1)^{|a_1|} \mathfrak{m}_2(a_2, a_1). \]
The underlying complex of $(\mathscr{A}_{\bK}, \{\mathfrak{m}_i\}_{i \geq 1})$ is the Floer cochain complex
given as a $B$-module by
\[ CF(\bK, \bK) = \mathrm{hom}^0(\bK, \bK) \oplus \mathrm{hom}^1(\bK, \bK), \quad \hbox{\rm where} \]
\[ 
  \mathrm{hom}^0(\bK, \bK) = B w_0 \oplus \bigoplus_{i=1}^{r-1} B w_i \quad \hbox{\rm and} \quad
  \mathrm{hom}^1(\bK, \bK) = B \bar{w}_0 \oplus \bigoplus_{i=1}^{r-1} B \bar{w}_i.
\]

\begin{definition}\label{sFBAFBADFNAD}
For $i \neq 0$, we associate a pair of generators ${w_i , \bar{w}_i}$ with each self-intersection point of the Lagrangian
~$\bK$. 
The generators $e = w_0$ and $q = \bar{w}_0$, placed 
as illustrated in Figure~\ref{argargargar},
 correspond to the minimum and the maximum of a Morse function chosen on the domain of the immersion of the Lagrangian.
\end{definition}

\begin{thm}\label{sGsgrsherheq}
The $A_\infty$-algebra 
$(\mathscr{A}_{\bK_{r,a}},\{\mathfrak{m}_i\}_{i\geq 1})$
has the following products:
\begin{enumerate}
\item[(1)] For $i=0,\ldots, r-1$, 
\begin{align*}
\mathfrak{m}_2(w_i,w_0) &= w_i = \mathfrak{m}_2(w_0,w_i) \\
\mathfrak{m}_2(\bar{w}_i,w_0) &= \bar{w}_i = -\mathfrak{m}_2(w_0,\bar{w}_i) \\
\mathfrak{m}_2(\bar{w}_i,w_i) &= \bar{w}_0 = - \mathfrak{m}_2(w_i,\bar{w}_i) 
\end{align*}
\item[(2)] 
For each $i\neq 0$, 
  \begin{align*} \mathfrak{m}_3(\bar{w}_i, w_i, \bar{w}_i) &= -\bar{w}_i \\
    \mathfrak{m}_3(\bar{w}_i,w_i, \bar{w}_0) &= -\bar{w}_0 \\
    \mathfrak{m}_3(w_i,\bar{w}_i, \bar{w}_0) &= \bar{w}_0  
    \end{align*} 
\item[(3)] 
For each  sub-interval $(x,y)$ of the {\it Gauss word}
(see Figure~\ref{asrgargqre})
\[ w_{r-1},w_{r-2}, \ldots, w_1, w_{-b}, w_{-2b}, \ldots, w_{-(r-1)b}, \]
\begin{align*}
  \mathfrak{m}_3(y,\bar{x},x)= y,\ 
  \mathfrak{m}_3(\bar{x},x,\bar{y})= -\bar{y}\quad
  &\hbox{\rm 
  if both $x$ and $y$ are in $\{-b,-2b,\ldots, -(r-1)b\}$,}\\
 \mathfrak{m}_3(x, \bar{x}, \bar{y})= \bar{y},\ 
  \mathfrak{m}_3(y,x,\bar{x})= - y\quad
  &\hbox{\rm 
if  $x\in \{r-1,\ldots,1\}$ and $y\in \{-b,\ldots, -(r-1)b\}$,}\\
  \mathfrak{m}_3(\bar{y},x, \bar{x}) = -\bar{y},\ 
  \mathfrak{m}_3(x,\bar{x}, y)= -y\quad
  &\hbox{\rm 
if both $x$ and $y$ are in $\{r-1,r-2,\ldots,1\}$.}
\end{align*}
\item[(4)] 
Products that correspond to `visible'  polygons are
described in Figure~\ref{SfvsbfsbFDnD}.
\begin{figure}[hbtp]
\begin{center}
\includegraphics[width=\textwidth]{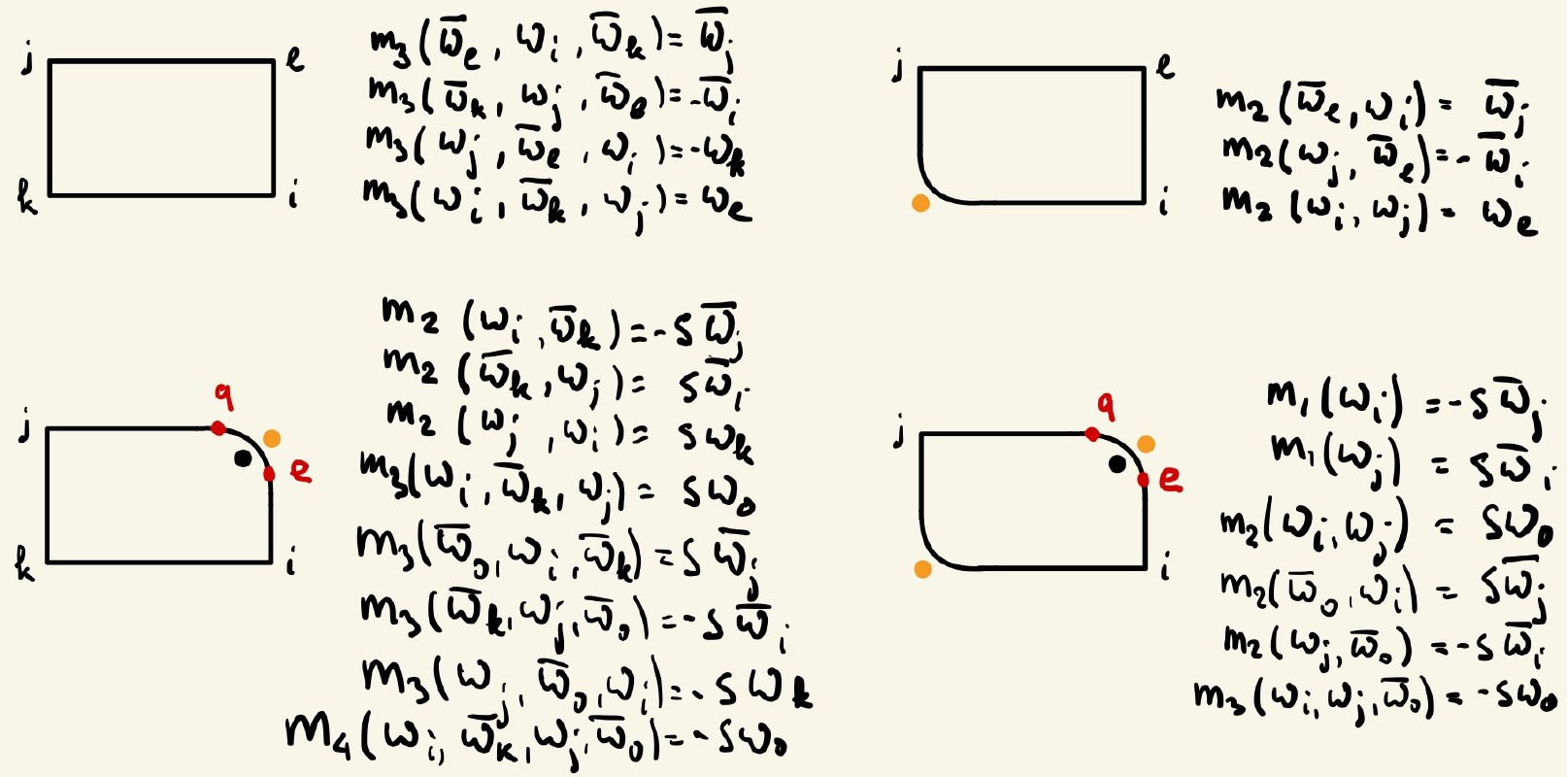}\qquad
\end{center}
\caption{Contributions to $\mathscr{A}_{\bK_{r,a}}$ from visible holomorphic polygons}
\label{SfvsbfsbFDnD}
\end{figure}
\end{enumerate}
\end{thm}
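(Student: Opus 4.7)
The plan is to compute $(\mathscr{A}_{\bK_{r,a}}, \{\mathfrak{m}_i\}_{i \geq 1})$ by enumerating rigid immersed holomorphic polygons in $(\bT_1, \{s\})$ whose boundary traces arcs of $\bK_{r,a}$ and whose convex corners are prescribed intersection generators, with each polygon $u$ contributing a weight $s^{\mathrm{mult}(u,\{s\})}$ and a sign dictated by the conventions of~\cite{Seidelbook}. Since $\bT_1$ is two-dimensional, the moduli space of such polygons is zero-dimensional once the combinatorial type is fixed, so the count reduces to combinatorics in the universal cover of the torus. The generating set is $\{w_i, \bar w_i\}_{i \in \bZ_r}$: the pair $(w_0, \bar w_0) = (e, q)$ comes from the Morse-Bott model on the $S^1$-domain of the immersion (a chosen minimum and maximum of a Morse function), while each remaining pair $(w_i, \bar w_i)$ is attached to the $i$-th self-intersection, with degrees forced by the bounding spin structure and standard grading.

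The products in (1) are immediate: $e$ is a strict unit in the Morse-Bott model, and $\mathfrak{m}_2(\bar w_i, w_i) = \pm \bar w_0$ is the Poincar\'e-duality pairing on Floer cochains of an immersed Lagrangian; the signs follow from the convention $a_2a_1 = (-1)^{|a_1|}\mathfrak{m}_2(a_2,a_1)$. The products in (2) are the universal ``teardrop'' contributions at any transverse self-intersection of an immersed Lagrangian (see the immersed-Lagrangian formalism invoked in~\cite{Abo}): at each self-intersection point there are exactly two constant triangles with the appropriate corner types, producing the three listed degree-$3$ compositions.

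For (3) and (4), I pass to the universal cover depicted in Figure~\ref{asrgargqre}. A rigid polygon lifts to an embedded polygonal region whose corners map to specified self-intersections. The ``strip'' polygons along sub-intervals of the Gauss word correspond to the long thin parallelograms between two adjacent parallel strands of the lift of $\bK_{r,a}$: each contributes exactly two convex corners at its endpoints, with intermediate corners concave, yielding Maslov index $1$. The splitting of the Gauss word into the two halves $\{w_{-kb}\}$ and $\{w_{k}\}$ reflects the two directions of strands visible in Figure~\ref{fbadhdthtsthstr}; the sign structure in each case is determined by which half the corners $x, y$ lie in. Part~(4) then consists of enumerating the remaining rigid polygons, which are precisely the non-strip immersed polygons whose ``shape'' is already visible as a complementary region in the torus (up to translation by $\Gamma$); these are listed with their $s$-weights in Figure~\ref{SfvsbfsbFDnD}.

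The hard part will be exhaustiveness: verifying that no further rigid polygon contributes. For this I would use the classification of complementary regions of $\bK_{r,a}$ in $\bT_1$ established in the corollary after Theorem~\ref{argargerhwetb}, namely that all but finitely many regions are rectangular. Rectangular regions contribute only to $\mathfrak{m}_2$ and to the strip $\mathfrak{m}_3$'s of~(3); the non-rectangular regions (one hexagon with the orange puncture and two triangles, one containing $\{s\}$) account for (4). Any hypothetical extra polygon would have to wrap nontrivially around a cycle of $\bT_1$ in a way incompatible with the lattice $\Gamma$ and would then either violate convexity at one of its corners or force an intersection with the orange puncture, which is forbidden in $\mathcal{F}(\bT_1,\{s\})$ since our $B$-linear relative theory only records intersections with $\{s\}$ and not with the orange puncture. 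The combinatorial sign verification, while lengthy, is forced by the $A_\infty$-relations already imposed by (1) and (2), so the main content remains the polygon enumeration.
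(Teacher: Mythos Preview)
Your proposal has a genuine conceptual gap: you treat all four families of products as arising from honest immersed polygons in $\bT_1$, but the paper's proof hinges on separating the contributions into two fundamentally different kinds. Contributions (1)--(3) come from the \emph{hidden} $A_\infty$-algebra $\mathscr{A}_0$ of the Weinstein neighborhood of $\bK_{r,a}$ (a plumbing), and the relevant moduli spaces of constant maps are \emph{not} regular for polygons with more than three corners. To compute (2) and (3) correctly one must take successive Hamiltonian push-offs $L, L', L'', L'''$ and count genuine rectangles between them, then identify $CF(L,L')$, $CF(L',L'')$, etc.\ with $CF(L,L)$. Your description of (2) as ``constant triangles'' is already off (these are $\mathfrak{m}_3$ products, hence four-cornered), and your picture for (3) as ``long thin parallelograms between two adjacent parallel strands of the lift'' is not what is happening: parallel strands of the lift do not intersect, so there are no corners there before perturbation. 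The thin rectangles only materialize after push-off, and their combinatorics is governed by the Gauss word precisely because the Gauss word records adjacency along the immersed curve, which is what the push-off construction sees.

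Your exhaustiveness argument also inverts the roles. The rectangular complementary regions (and their concatenations) are exactly what feed the \emph{visible} polygons of type~(4), not the strip products of~(3); the paper's key structural observation is that the grid-like shape of $\bK_{r,a}$ forces every visible polygon avoiding the orange puncture to be a rectangle or a degeneration thereof (triangle or bigon), and these are enumerated in the figure for~(4). So the decomposition you need is not ``rectangular regions give (3), exceptional regions give (4)'' but rather ``hidden algebra gives (1)--(3), visible polygons give~(4)''. Without the push-off computation your argument for (2) and (3) does not go through, and your claimed sign determination ``forced by the $A_\infty$-relations'' is not a substitute: the $A_\infty$-relations constrain but do not determine these products.
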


\begin{proof} Let $\bK:=\bK_{r,a}$.
The $A_\infty$ structure of $\mathscr{A}_\bK$ is defined via counts of holomorphic polygons with boundary on~$\bK$. We stick to the conventions laid out in \cite[Section 7]{Seidelgenus2}. When working with Fukaya categories of surfaces, the holomorphic curve contributions come in two flavors. There are ``visible'' immersed polygons with boundary on the Lagrangians, and ``virtual'' polygons which only become visible after successive perturbations of the Lagrangians. Before perturbing, the corresponding moduli spaces are in general not regular around constant maps for polygons with more than three edges (see \cite{LPerutz} for an illustration), hence to count these contributions correctly one has to use virtual fundamental chains or else perturb. A concrete way to deal with this issue in the case of surfaces is by taking successive push-offs of the Lagrangian using a small Hamiltonian perturbation. Fortunately, in this paper, other than the bigons and triangles that contribute to the differential $\mathfrak{m}_1$ and the product $\mathfrak{m}_2$ for which regularity can be arranged (even before perturbing, see \cite[Section 7]{Seidelgenus2}), we will only deal with holomorphic rectangles to compute the contributions to $\mathfrak{m}_3$ and the perturbations by push-offs still remain manageable.
Taking only the virtual contributions into account, one gets a model for the Fukaya category of the Weinstein neighborhood of the immersed Lagrangian (a plumbing) which can be described with an $A_\infty$-algebra with $\mathfrak{m}_i \neq 0$ only for $i=2,3$. We call this algebra {\it a hidden $A_\infty$  algebra $\mathscr{A}_0$}. The~contributions (1), (2), (3) to the $A_\infty$  algebra $\mathscr{A}_\bK$ come from this hidden algebra.


\begin{figure}[hbtp]
\begin{center}
  \begin{tikzpicture}

    \tikzset{->-/.style={decoration={markings, mark=at position #1 with {\arrow{>}}},postaction={decorate}}}

    \node[] (t1) at (0,0) {};
    \node[] (t2) at (0,4) {};
    \node[] (t3) at (10,4) {};
    \node[] (t4) at (10,0) {};

    \node[] (L0left) at (0,1) {};
    \node[] (L0mid)  at (5,2) {};
    \node[] (L0right) at (10,3) {};

    \node[] (L1left) at (0,3) {};
    \node[] (L1mid)  at (5,2) {};
    \node[] (L1right) at (10,1) {};

    \node[] (x1) at (4.5,2) {\small $x$}; 
    \node[] (x2) at (5, 2.3) {\small $\bar{x}$};

    \fill[black] (6.7,2.58) circle (2pt);    
    \fill[black] (8.2,2.88) circle (2pt);    

    \node[] (q) at (6.7,2.8) {\small $q$};
    \node[] (e) at (8.2,3.1) {\small $e$};
    
     \draw[gray!90, thick] (t1.center) -- (t2.center) -- (t3.center) -- (t4.center) -- (t1.center);

    \draw[fill=black!70!black, thick] (1,2) circle (0.2cm);
    \node (t2) at (0.6,1.7) {$s$};
    \draw[fill=orange,thick] (5,3) circle (0.2cm);

    \draw[thick] (L0left.center) edge[bend right=10] (L0mid.center);
    \draw[thick] (L0mid.center) edge[bend left=10] (L0right.center);

    \draw[thick, ->-=.5] (L1left.center) to[bend left=10] (L1mid.center);
    \draw[thick] (L1mid.center) edge[bend right=10] (L1right.center);

  \end{tikzpicture}
\end{center}
\caption{Kawamata Lagrangian of ${1\over2}(1,1)$.}\label{srgarga}
\end{figure}

We will only compute these contributions 
in  the simplest example of ${1\over2}(1,1)$, since the general case is  similar.
The Kawamata Lagrangian is illustrated in Figure~\ref{srgarga}, where we denote $w_1$ by~ $x$.
We have 
$$\mathrm{hom}^0(L, L) = \mathbb{Z} e \oplus \mathbb{Z} x\quad\hbox{\rm and}\quad
\mathrm{hom}^1(L, L) = \mathbb{Z} q \oplus \mathbb{Z} \bar{x}.$$
Following the statement of Theorem~\ref{sGsgrsherheq},
in addition to $m_2$ products from (1), we need to find three $m_3$ products from (2) and
two $m_3$ products from (3) (the Gauss word is $\{1,1\}$ and only the second case of (3) is present).
These triple products are
\begin{align*}
   \mathfrak{m}_3(x, x, \bar{x}) &= -x \\
   \mathfrak{m}_3(x, \bar{x}, \bar{x}) &= \bar{x} = - \mathfrak{m}_3(\bar{x}, x, \bar{x}) \\
   \mathfrak{m}_3(x, \bar{x}, q) &= q = - \mathfrak{m}_3(\bar{x}, x, q)
\end{align*}
They can be checked by perturbing the Lagrangian by taking three push-offs. The next figure  shows the rectangle that gives  the triple product  $\mathfrak{m}_3(x, x, \bar{x}) = -x$.  
The~reader is invited to find rectangles that give  other triple products listed above.


\definecolor{darkgreen}{rgb}{0.0, 0.5, 0.0}

\begin{center}
  \begin{tikzpicture}

      \tikzset{->-/.style={decoration={markings, mark=at position #1 with {\arrow{>}}},postaction={decorate}}}

    \node[] (t1) at (0,0) {};
    \node[] (t2) at (0,4) {};
    \node[] (t3) at (10,4) {};
    \node[] (t4) at (10,0) {};

    \node[] (L0left) at (0,1) {};
    \node[] (L0mid)  at (5,2) {};
    \node[] (L0right) at (10,3) {};

    \node[] (L1left) at (0,3) {};
    \node[] (L1mid)  at (5,2) {};
    \node[] (L1right) at (10,1) {};

    \node[] (K0left) at (0,0.7) {};
    \node[] (K0mid)  at (5,1.7) {};
    \node[] (K0right) at (10,2.7) {};

    \node[] (K1left) at (0,2.7) {};
    \node[] (K1mid)  at (5,1.7) {};
    \node[] (K1right) at (10,0.7) {};

    \node[] (N0left) at (0,0.4) {};
    \node[] (N0mid)  at (5,1.4) {};
    \node[] (N0right) at (10,2.4) {};

    \node[] (N1left) at (0,2.4) {};
    \node[] (N1mid)  at (5,1.4) {};
    \node[] (N1right) at (10,0.4) {};

    \node[] (M0left) at (0,0.1) {};
    \node[] (M0mid)  at (5,1.1) {};
    \node[] (M0right) at (10,2.1) {};

    \node[] (M1left) at (0,2.1) {};
    \node[] (M1mid)  at (5,1.1) {};
    \node[] (M1right) at (10,0.1) {};
    

    \node[] (q) at (6.7,2.8) {};
    \node[] (e) at (8.2,3.1) {};

    \node[] (qp) at (6.7,2.58) {};
    \node[] (ep) at (8.2,2.88) {};

    \node[] (qq) at (7.2,2.68) {};
    \node[] (eq) at (7.8,2.78) {};

    \node[] (Qq) at (7.35,2.88) {};
    \node[] (Eq) at (7.55,2.98) {};
    

     \draw[gray!90, thick] (t1.center) -- (t2.center) -- (t3.center) -- (t4.center) -- (t1.center);

     \draw[fill=gray!50] (4.6,1.56) .. controls +(20:1) and +(-165:1) .. (5.45, 1.80)  .. controls +(-23:1) and +(-177:1) .. (10, 0.98)  .. controls +(-90:1) and +(90:1) .. (10,0.42) .. controls +(190:1) and +(-32:1) .. cycle;
     
    \draw[fill=gray!50] (4.6,1.24) .. controls +(1600:1) and +(-20:1) .. (3.7, 1.54)  .. controls +(205:1) and +(0:1) .. (0, 1)  .. controls +(-90:1) and +(90:1) .. (0,0.4) .. controls +(0:1) and +(210:1) .. cycle;
      
    \draw[fill=black!70!black, thick] (1,1.7) circle (0.15cm);
    \node (t2) at (0.7,1.4) {$s$};
    \draw[fill=orange,thick] (5,3) circle (0.15cm);
    
    \draw[thick] (L0left.center) to[bend right=10] (L0mid.center);
    \draw[thick] (L0mid.center) edge[bend left=10] (L0right.center);
    \draw[thick,->-=.5] (L1left.center) to[bend left=10] (L1mid.center);
    \draw[thick] (L1mid.center) edge[bend right=10] (L1right.center);

    \draw[blue, thick] (K0left.center) edge[bend right=10] (K0mid.center);
    \draw[blue, thick] (K0mid.center) edge[bend right=15] (qp.center);
    \draw[blue, thick] (qp.center) edge[bend left=45] (ep.center);
    \draw[blue, thick] (ep.center) edge[bend right=10] (K0right.center);
    \draw[blue, thick, ->-=.5] (K1left.center) to[bend left=10] (K1mid.center);
    \draw[blue, thick] (K1mid.center) edge[bend right=10] (K1right.center);

    \draw[red, thick] (N0left.center) edge[bend right=10] (N0mid.center);
    \draw[red, thick] (N0mid.center) .. controls +(30:1) and +(-100:1) .. (qq.center);
    \draw[red,thick] (qq.center) .. controls +(100:1) and +(110:1) .. (eq.center);
    \draw[red, thick] (eq.center) .. controls +(-50:1) and +(180:1) .. (N0right.center);
    \draw[red, thick, ->-=.5] (N1left.center) to[bend left=10] (N1mid.center);
    \draw[red, thick] (N1mid.center) edge[bend right=10] (N1right.center);

    \draw[darkgreen, thick] (M0left.center) edge[bend right=10] (M0mid.center);
    \draw[darkgreen, thick] (M0mid.center) .. controls +(15:1) and +(-80:1) .. (Qq.center);
    \draw[darkgreen, thick] (Qq.center) .. controls +(100:1) and +(110:1) .. (Eq.center);
    \draw[darkgreen, thick] (Eq.center) .. controls +(-70:1) and +(180:1) .. (M0right.center);
    \draw[darkgreen, thick, ->-=.5] (M1left.center) to[bend left=10] (M1mid.center);
    \draw[darkgreen, thick] (M1mid.center) edge[bend right=10] (M1right.center);

     \fill[black] (5.45,1.81) circle (2pt);    
    \fill[black] (4.6,1.55) circle (2pt);    
    \fill[black] (4.6,1.25) circle (2pt);
    \fill[black] (3.7,1.54) circle (2pt);

    \node[] (v1) at (11,1) {};
    \node[] (v2) at (11,2) {} ;
    \node[] (v3) at (12,2) {};
    \node[] (v4) at (12,1) {};

    \draw[thick,darkgreen, ->-=0.6] (v1.center) -- (v2.center);
    \draw[thick,red, ->-=0.6] (v3.center) -- (v2.center);
    \draw[thick,blue, ->-=0.6] (v3.center) -- (v4.center);
    \draw[thick, ->-=-.5] (v4.center) -- (v1.center);

    \node[shift={(0,-.2)}] at (v1) {\small $x$}; 
    \node[shift={(0,.2)}] at (v2) {\small $x$};
    \node[shift={(0,.2)}] at (v3) {\small $x$};
    \node[shift={(0,-.22)}] at (v4) {\small $\bar{x}$};
    
  \end{tikzpicture}
\end{center}

    The contributions via perturbation are computed in the following manner. Let $L$, ${\color{blue} L'}$, ${\color{red}L''}$ and ${\color{darkgreen}L'''}$ are the original Lagrangian and its push-offs. Then, treating these Lagrangians as separate, we compute the triangle that contributes to the product
    \[ \mathfrak{m}_3: CF({\color{red} L''}, {\color{darkgreen} L'''}) \otimes CF({\color{blue} L'},{\color{red} L''}) \otimes CF(L, {\color{blue} L'} ) \to CF(L, {\color{darkgreen} L'''}) \]
   
    This means looking for rectangles (in the case of $\mathfrak{m}_3$) whose boundary traces the Lagrangians  $L$, ${\color{blue} L'}$, ${\color{red}L''}$ and ${\color{darkgreen} L'''}$ in the counter-clockwise order. The corner between $L, {\color{darkgreen} L'''}$ is treated as an output and all the others are input. Once we orient our Lagrangians (which we always do in the way indicated), an intersection point $p \in CF(L,K)$ corresponds to a degree 0 generator if the intersection number $L \cdot K = -1$ and a degree 1 generator if the intersection number $L\cdot K = +1$. The sign contribution of a polygon is determined according to whether the orientation of the Lagrangians in its boundary matches with the counter-clockwise orientation of the boundary of the polygon, see \cite[Section 7]{Seidelgenus2} for a detailed explanation. Finally, to get the product defined on $CF(L,L)$, we identify $CF(L,L)$ with  $CF(L, {\color{blue} L'} )$, $CF({\color{blue} L'},{\color{red} L''})$, $CF({\color{red} L''}, {\color{darkgreen} L'''})$ and  $CF(L, {\color{darkgreen} L'''})$.
    In~general, these identifications might be non-trivial to compute but in the case of surfaces the are straightforward.

Finally, we analyze contributions to $\mathscr{A}_\bK$ given by equations
Theorem~\ref{sGsgrsherheq}~(4). They come from the visible holomorphic polygons, for which the corresponding moduli spaces are regular. We illustrate some visible polygons in Figure~\ref{argargargar}.
\begin{figure}[hbtp]
\begin{center}
\includegraphics[width=0.6\textwidth]{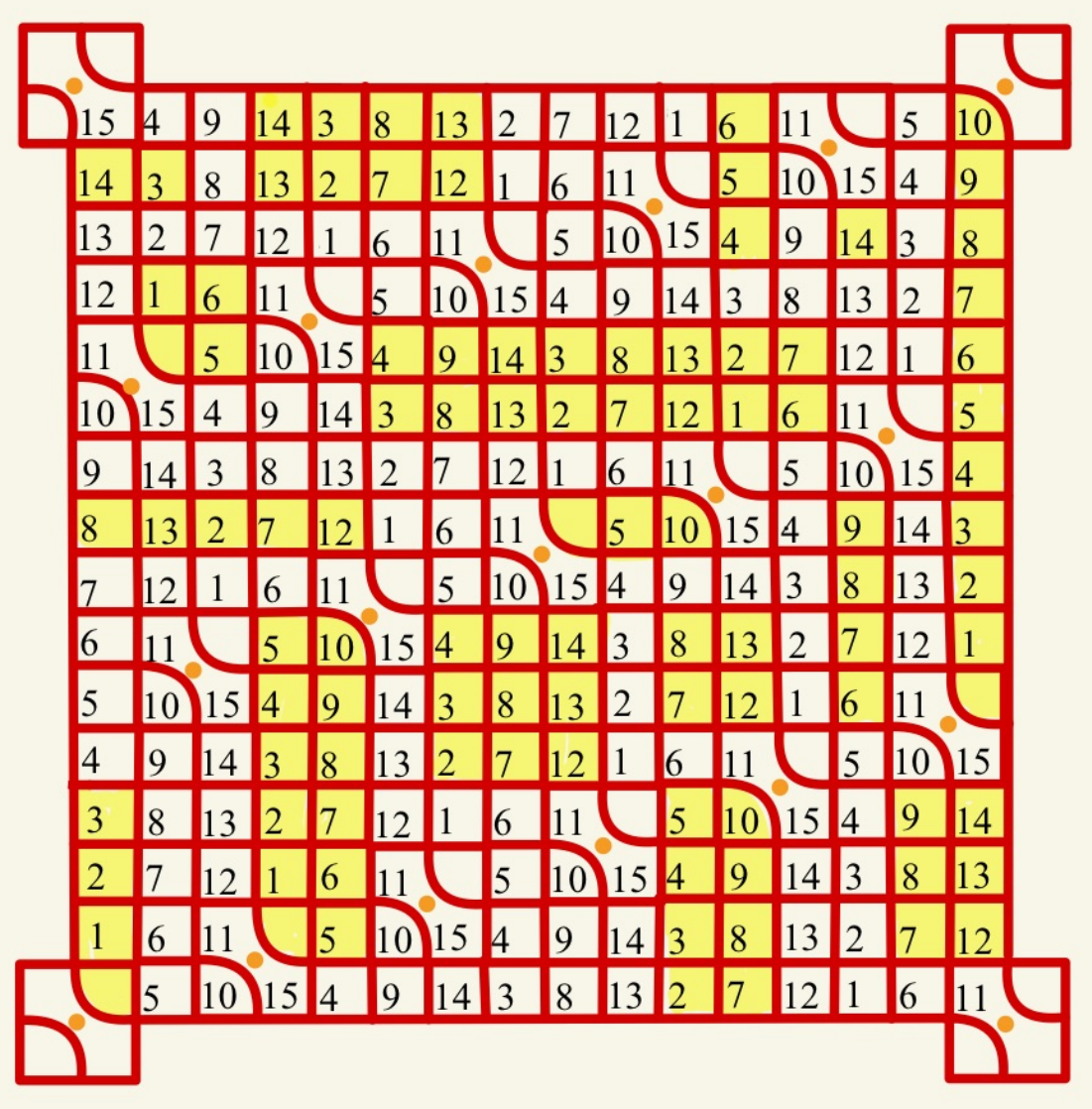}
\end{center}
\caption{Some visible holomorphic polygons}
\label{argargargar}
\end{figure}

We can view these contributions as providing an $A_\infty$ deformation of the hidden $A_\infty$-algebra $\mathscr{A}_0$ to $\mathscr{A}_\bK$. A~special feature of our Lagrangian $\bK$ is its grid-like structure, illustrated in Figure~\ref{argargargar}. It~implies that all visible polygons with boundary on $\bK$ that do not contain the orange puncture (but may contain the black puncture) are either {\it rectangles} or degenerations of rectangles ({\it bigons} or {\it triangles}), where the missing vertices of a rectangle correspond to the curved sections of the Kawamata Lagrangian passing close to the orange puncture. These polygons are given in Figure~\ref{SfvsbfsbFDnD}. 
In the ${1\over2}(1,1)$ example, only the bigon from the bottom right corner of Figure~\ref{SfvsbfsbFDnD} shows up. Since $i = j = 1$ in this case, the contributions to $m_1$ cancel each other out, leaving the products
$$ \mathfrak{m}_2(x, x) = s \cdot e = -\mathfrak{m}_3(x, x, q) \quad \hbox{\rm and} \quad
  \mathfrak{m}_2(q, x) = s \cdot \bar{x} = -\mathfrak{m}_2(x, q) $$
Computing these products for general $(r,a)$ is a routine calculation providing several cases summarized in Figure~\ref{SfvsbfsbFDnD}.
\end{proof}

The Kawamata Lagrangian $\bK=\bK_{r,a}$ 
is the mirror of one possible deformation of the vector bundle $F_E$ to the family of genus $1$ curves $\sE$.
To compute the mirrors of all possible deformations,
we use the formalism of bounding cochains.

Let $\mathfrak{b} \in CF^1(\bK,\bK)$.
By Fukaya--Oh--Ohta--Ono \cite{FOOO}, the new products 
 \[ \mathfrak{m}^{\mathfrak{b}}_i (x_i,x_{i-1},\ldots, x_1)= \sum_{j \geq i} \mathfrak{m}_j (\mathfrak{b},\ldots, \mathfrak{b},x_i,\mathfrak{b},\ldots, \mathfrak{b},x_{i-1},\mathfrak{b},\ldots,\ldots,\mathfrak{b},x_1,\mathfrak{b},\ldots, \mathfrak{b}) \] 
give a deformed $A_\infty$-algebra if the {\it bounding cochain} $\mathfrak{b}$ satisfies the Maurer--Cartan equation
   \[ \mathfrak{m}_1(\mathfrak{b}) + \mathfrak{m}_2(\mathfrak{b},\mathfrak{b}) + \ldots = 0.\]
  In our case, this equation is automatic because $CF(\bK,\bK)$ has no generators in degree 2. 
Thus, we have the following immediate corollary of Theorem~\ref{sGsgrsherheq}.

\begin{cor}\label{argaergqehqeth}
Fix a bounding cochain $\mathfrak{b}= \sum\limits_{i \in \bZ_r} t_i \bar{w}_i \in CF^1(\bK,\bK)$.
Contributions to the differentials $dw_i=m_1^\mathfrak{b}(w_i)$ and to the products $w_iw_j=m_2^\mathfrak{b}(w_i,w_j)$
in the relative Fukaya category of $\mathcal{F}(\bT_1, \{s\})$ 
are given in Figure~\ref{sFGSFGADHADTEH}, where 
$s$ needs to be replaced by $s(1-t_0)$
(in practice, we will assume that $t_0=0$, so this doesn't matter.)

\begin{figure}[hbtp]
\begin{center}
\includegraphics[height=0.22\textheight]{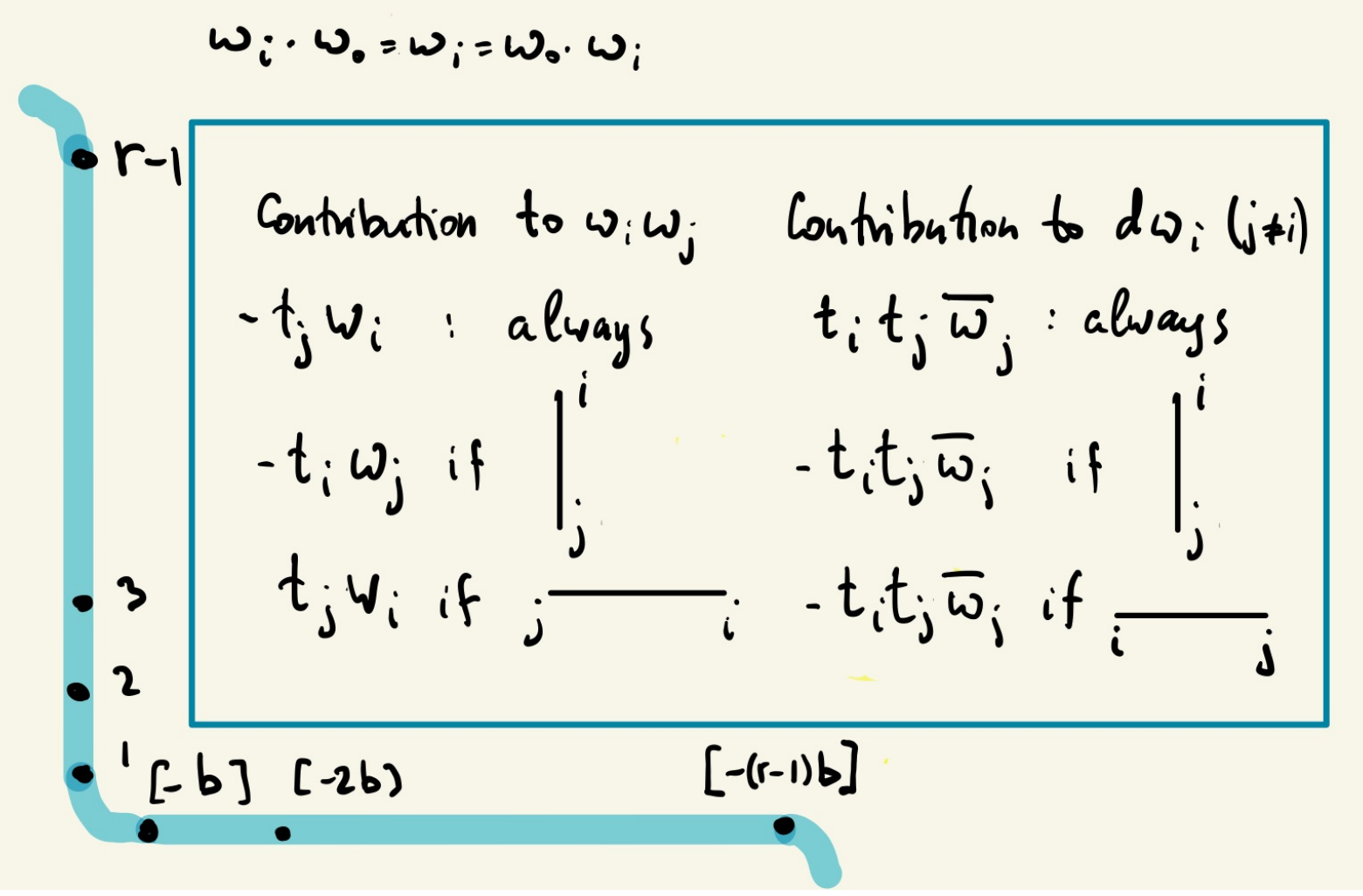}\qquad
\includegraphics[height=0.22\textheight]{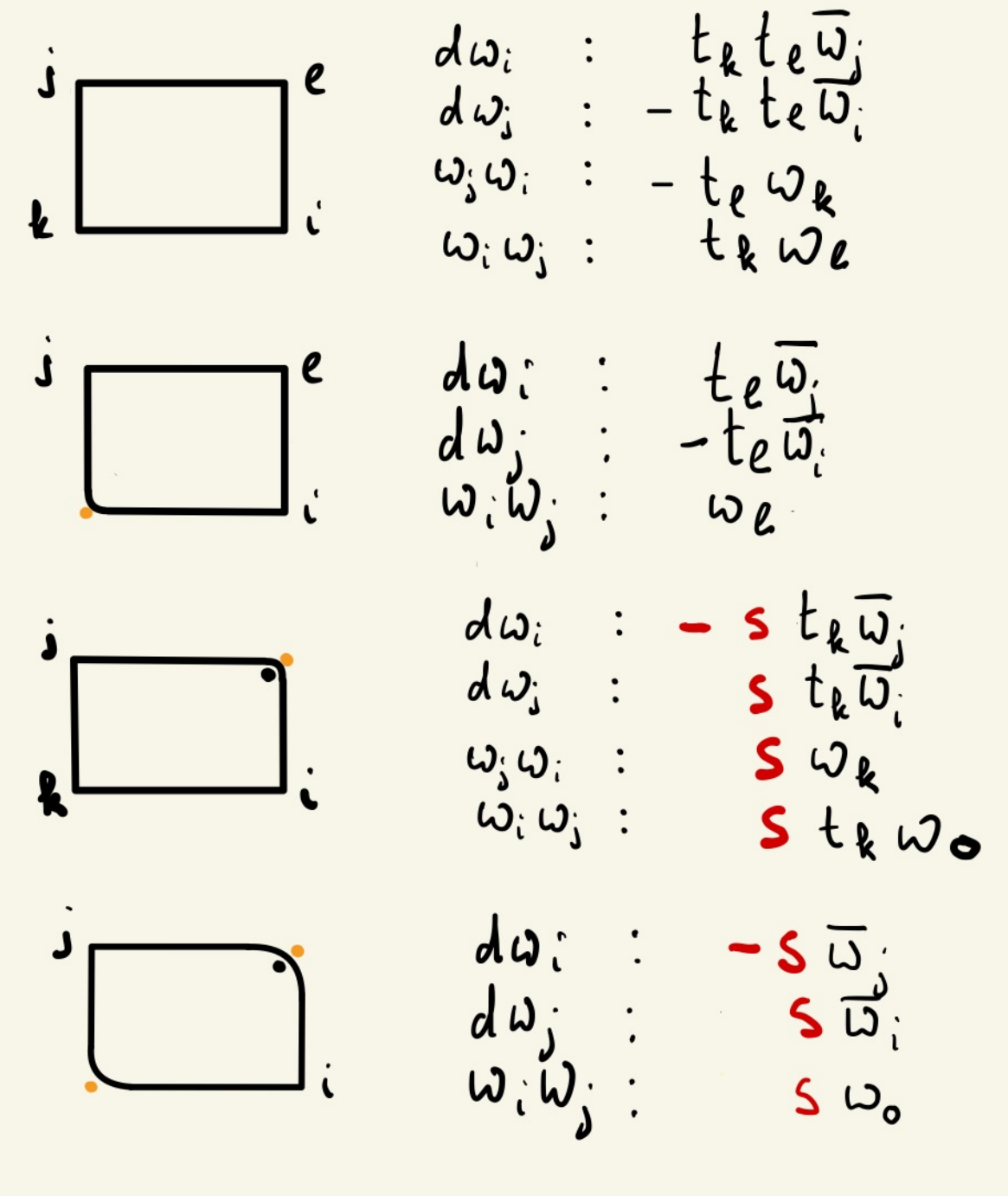}
\end{center}
\caption{Contributions from  virtual (left) and visible (right) polygons}\label{sFGSFGADHADTEH}
\end{figure}


Write $d w_i=\sum m_{ij}\bar w_j$ for $i,j=1,\ldots,r-1$.
The matrix $\mathfrak{D}=(m_{ij})$ is skew-symmetric. 
The subscheme $\Def^0_{F_E/\sE}\subset\Def_{F_E/\sE}$ of Definition~\ref{sRBASRGARGHA}
is cut out by the ideal in $B[t_0,\ldots,t_{r-1}]$ generated by the matrix entries of $\mathfrak{D}$.
Over~this~subscheme,  
\[ \cR=\cR_{\mathfrak{b}} = (\mathrm{hom}^0(\bK,\bK), 
\mathfrak{m}_2^\mathfrak{b}).\]
gives a flat deformation
 of the Kalck-Karmazyn algebra $R=\End(F_E)\cong \End(\bK)$. 
 \end{cor}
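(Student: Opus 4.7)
My plan is to derive each assertion as a direct consequence of Theorem~\ref{sGsgrsherheq} combined with the bounding-cochain formalism of Fukaya--Oh--Ohta--Ono. First I would obtain the deformed products by substituting $\mathfrak{b}=\sum_i t_i\bar w_i$ into
\[ \mathfrak{m}_k^{\mathfrak{b}}(x_k,\ldots,x_1)=\sum_{j_0,\ldots,j_k\geq 0}\mathfrak{m}_{k+j_0+\cdots+j_k}(\underbrace{\mathfrak{b},\ldots,\mathfrak{b}}_{j_k},x_k,\ldots,x_1,\underbrace{\mathfrak{b},\ldots,\mathfrak{b}}_{j_0}) \]
and expand using the explicit products of Theorem~\ref{sGsgrsherheq}. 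The virtual triple products in parts~(2)--(3) accept exactly one $\bar w_j$ insertion to contribute to $\mathfrak{m}_1^{\mathfrak{b}}$ and exactly two insertions to contribute to the corrections of $\mathfrak{m}_2^{\mathfrak{b}}$; after a routine sign bookkeeping these yield the left half of Figure~\ref{sFGSFGADHADTEH}.

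For the visible polygons of Theorem~\ref{sGsgrsherheq}~(4), the key observation I would use is that each polygon carries a corner near the orange puncture at which an arbitrary number of copies of $q=\bar w_0$ can be inserted without changing the combinatorial type of the polygon. Summing the geometric series $\sum_{k\geq 0} t_0^k = (1-t_0)^{-1}$ over such insertions effects precisely the substitution $s\mapsto s(1-t_0)$ in the visible contributions, yielding the right half of Figure~\ref{sFGSFGADHADTEH}. Next, the skew-symmetry of $\mathfrak{D}$ will follow from the cyclic Calabi--Yau structure on $\mathscr{A}_{\bK}$, which exists because $\bK$ is a compact exact Lagrangian in a symplectic surface: the Poincar\'e pairing $\langle w_i,\bar w_j\rangle=\delta_{ij}$ combined with cyclic symmetry of the deformed operation gives
\[ m_{ij}=\langle\mathfrak{m}_1^{\mathfrak{b}}(w_i),w_j\rangle=-\langle w_i,\mathfrak{m}_1^{\mathfrak{b}}(w_j)\rangle=-m_{ji}, \]
the sign being forced by $|w_i|=|w_j|=0$ on a two-dimensional surface.

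Finally, to identify $\Def^0_{F_E/\sE}$ and to establish flatness, I would apply the families version of the mirror equivalence $\mathcal{F}(\bT_1,\{s\})\simeq\Perf(\sE)$ from~\cite{LPol} over $\Spec B[t_0,\ldots,t_{r-1}]$. This identifies the deformed Floer complex $(\mathrm{hom}^{\bullet}(\bK,\bK),\mathfrak{m}_1^{\mathfrak{b}})$ with $\rhom(\cV_p,\cV_p)$ fibrewise, and hence $\dim_k\End(\cV_p)=r-\rk(\mathfrak{D}(p))$. Maximality of this dimension therefore holds precisely on the vanishing locus of the entries of $\mathfrak{D}$; over that locus $\mathfrak{m}_1^{\mathfrak{b}}=0$, so $(\mathrm{hom}^0(\bK,\bK),\mathfrak{m}_2^{\mathfrak{b}})$ is free of rank $r$ with an automatically associative product (the $A_\infty$ relation modulo the vanishing differential), providing the desired flat deformation of $R_{r,a}$.

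The main obstacle I anticipate is the sign bookkeeping, both in the cyclic-symmetry argument and in verifying that the iterated insertions of $q=\bar w_0$ really sum to the factor $(1-t_0)^{-1}$ with compatible Koszul signs. The combinatorics of which visible polygon contributes to which entry of $\mathfrak{D}$ is however fully dictated by the grid-like structure of $\bK$ already exploited in Theorem~\ref{sGsgrsherheq}, so no further enumeration of holomorphic polygons is needed.
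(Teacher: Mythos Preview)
Your approach matches the paper's: the corollary is stated there as an immediate consequence of Theorem~\ref{sGsgrsherheq} together with the FOOO bounding-cochain formalism, and your sketch simply unpacks what ``immediate'' means.  The skew-symmetry via the cyclic pairing and the identification of $\Def^0_{F_E/\sE}$ with the vanishing locus of $\mathfrak{D}$ are exactly the intended arguments.

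One point deserves correction.  Your mechanism for the factor $(1-t_0)$ is internally inconsistent: you write that $\sum_{k\ge 0} t_0^k=(1-t_0)^{-1}$ and then assert this effects $s\mapsto s(1-t_0)$.  In fact no geometric series is involved.  The $A_\infty$-algebra $\mathscr{A}_{\bK}$ computed in Theorem~\ref{sGsgrsherheq} has only $\mathfrak{m}_2$ and $\mathfrak{m}_3$ in its hidden part, and the visible polygons are (possibly degenerate) rectangles, so at most one additional copy of $\bar w_0$ can be inserted at the curved corner.  For instance, in the model case one has $\mathfrak{m}_2(x,x)=s\,e$ and $\mathfrak{m}_3(x,x,q)=-s\,e$, whence $\mathfrak{m}_2^{\mathfrak{b}}(x,x)=s\,e - t_0 s\,e = s(1-t_0)\,e$ directly.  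So the substitution $s\mapsto s(1-t_0)$ is a single-insertion phenomenon, not a resummed series; once you adjust this, the sign bookkeeping you flagged becomes routine.
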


\begin{cor}\label{GSRGHSRHRH}
The Kalck--Karmazyn algebra $R_{r,a}$ has multiplication given by \eqref{sDBASBASBRQ}.\break
One can  also write a closed expression for this product. 
For every $i\in\bZ_r$, we define $[i]\in\bZ$ such that 
$0 \leq [i] < r$ and $i \equiv [i] \mod r$. 
We  define a   function $m(j) = \min\limits_{k=1,\ldots,[-aj]} [kb]$ for $j \in \bZ_r^*$
and set $m(0) = r$.
Then
$R_{r,a}$ has basis $w_i$ for $i\in\bZ_r$ and product
\begin{equation}\label{dghzdghdtha}
w_jw_i=\begin{cases}
w_{j+i} & \hbox{\rm if $m(j)>[i]$}\cr
0 &  \hbox{\rm otherwise.}
\end{cases}
\end{equation}
\end{cor}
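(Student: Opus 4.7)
The plan is to deduce Corollary~\ref{GSRGHSRHRH} from Corollary~\ref{argaergqehqeth} by specialising to the undeformed situation (bounding cochain $\mathfrak{b}=0$ and compactification parameter $s=0$) and then repackaging the resulting combinatorics first into the Young-diagram criterion of Corollary~\ref{aethdthat} (which gives \eqref{sDBASBASBRQ}) and then into the closed expression \eqref{dghzdghdtha}.

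First I would identify the surviving contributions: at $\mathfrak{b}=0$ and $s=0$ the only contributions to $\mathfrak{m}_2^{\mathfrak{b}}(w_j,w_i)$ are the Morse products of Theorem~\ref{sGsgrsherheq}(1) together with the visible holomorphic polygons on $\bK:=\bK_{r,a}$ that avoid both punctures. By the grid-like structure of $\bK$ (see Figure~\ref{argargargar}), each such polygon is a rectangle or a degenerate bigon/triangle with two input corners at the degree-$0$ generators $w_j, w_i$, an output corner at $w_{j+i}$, and a fourth corner at the identity $e=w_0$. Lifted to the universal cover of $\bT_2$, each such polygon is a genuine $\bZ^2$-rectangle whose vertices at non-orange lattice points coincide with self-intersections of $\bK$, and the local label at $(I,J)$ is $\gamma(I,J) = J - bI \bmod r$.

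Next I would make the generator--box dictionary explicit. The box at $(I,0)$ carries the label $-bI \bmod r$, so it equals $j$ exactly when $I = [-aj]$ (using $ab \equiv 1 \bmod r$); analogously the box at $(0,J)$ is labelled $i$ iff $J = [i]$. The smallest lattice rectangle containing these two boxes is $\{(x,y) : 0 \le x \le [-aj],\ 0 \le y \le [i]\}$, and a polygon contributing to $\mathfrak{m}_2(w_j, w_i)$ at $s=0$ exists iff this rectangle is contained in the maximal Young diagram $\Lambda$ that avoids $\Gamma \setminus \{0\}$ in its interior. This is the criterion of Corollary~\ref{aethdthat} and yields~\eqref{sDBASBASBRQ}.

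Finally I would rewrite the containment as the closed condition. A sub-rectangle based at the origin lies in $\Lambda$ iff its own interior avoids $\Gamma$, i.e.\ there is no $(k, \ell) \in \Gamma$ with $1 \le k \le [-aj]$ and $1 \le \ell \le [i]$. Since $\gcd(b, r) = 1$, for each $1 \le k < r$ the smallest positive $\ell$ with $(k, \ell) \in \Gamma$ equals $[kb]$; so the condition becomes $m(j) := \min_{k=1,\ldots,[-aj]}[kb] > [i]$, with the empty-minimum convention $m(0) = r$ matching the identity element $w_0 = e$ of Theorem~\ref{sGsgrsherheq}(1). The main obstacle is the passage from the Fukaya-category enumeration of Figure~\ref{SfvsbfsbFDnD} to this clean lattice picture: one must match the degenerate bigon/triangle cases to rectangles touching the axes in the universal cover and verify that all $A_\infty$ signs conspire to give the unsigned coefficient $+1$ in front of $w_{j+i}$ whenever the rectangle fits in $\Lambda$, and zero otherwise.
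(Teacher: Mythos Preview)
Your approach is essentially the same as the paper's: specialise Corollary~\ref{argaergqehqeth} to $\mathfrak{b}=0$, $s=0$, observe that only the visible triangles survive, identify these with lattice rectangles $[0,[-aj]]\times[0,[i]]$ in the universal cover that avoid orange dots except at the origin, and translate this to $m(j)>[i]$. Your derivation of the closed expression is in fact more detailed than the paper's, which simply asserts the equivalence. One small inaccuracy: the fourth corner of the lattice rectangle at $(0,0)$ is an orange point of $\Gamma$, not the Morse identity $e=w_0$; on the torus the contributing polygon is a genuine triangle with three corners $w_j$, $w_i$, $w_{j+i}$, and the ``missing'' corner is where the Lagrangian bends near the orange puncture (cf.\ the second row on the right of Figure~\ref{sFGSFGADHADTEH}). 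This does not affect your argument.
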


\begin{proof}
When $s=t_i=0$ for all $i$, all differentials in Corollary~\ref{argaergqehqeth} trivially vanish and the only
contributions to the products $w_iw_j$ come from visible triangles (see the second row on the right side of Figure~\ref{sFGSFGADHADTEH}.) But visible triangles
precisely correspond to rectangles in the first quadrant with vertices
$$\begin{matrix}
(0,[i])& \dots & ([-aj],[i])\cr
\vdots &&\vdots\cr
(0,0) &\dots & ([-aj],0)\cr
\end{matrix}$$
does not contain any orange dots except for $(0,0)$.
This condition, appearing in~\eqref{sDBASBASBRQ},
is also equivalent to the inequality
$m(j)>[i]$.
\end{proof}

\begin{figure}[hbtp]
\begin{center}
\includegraphics[height=0.3\textheight]{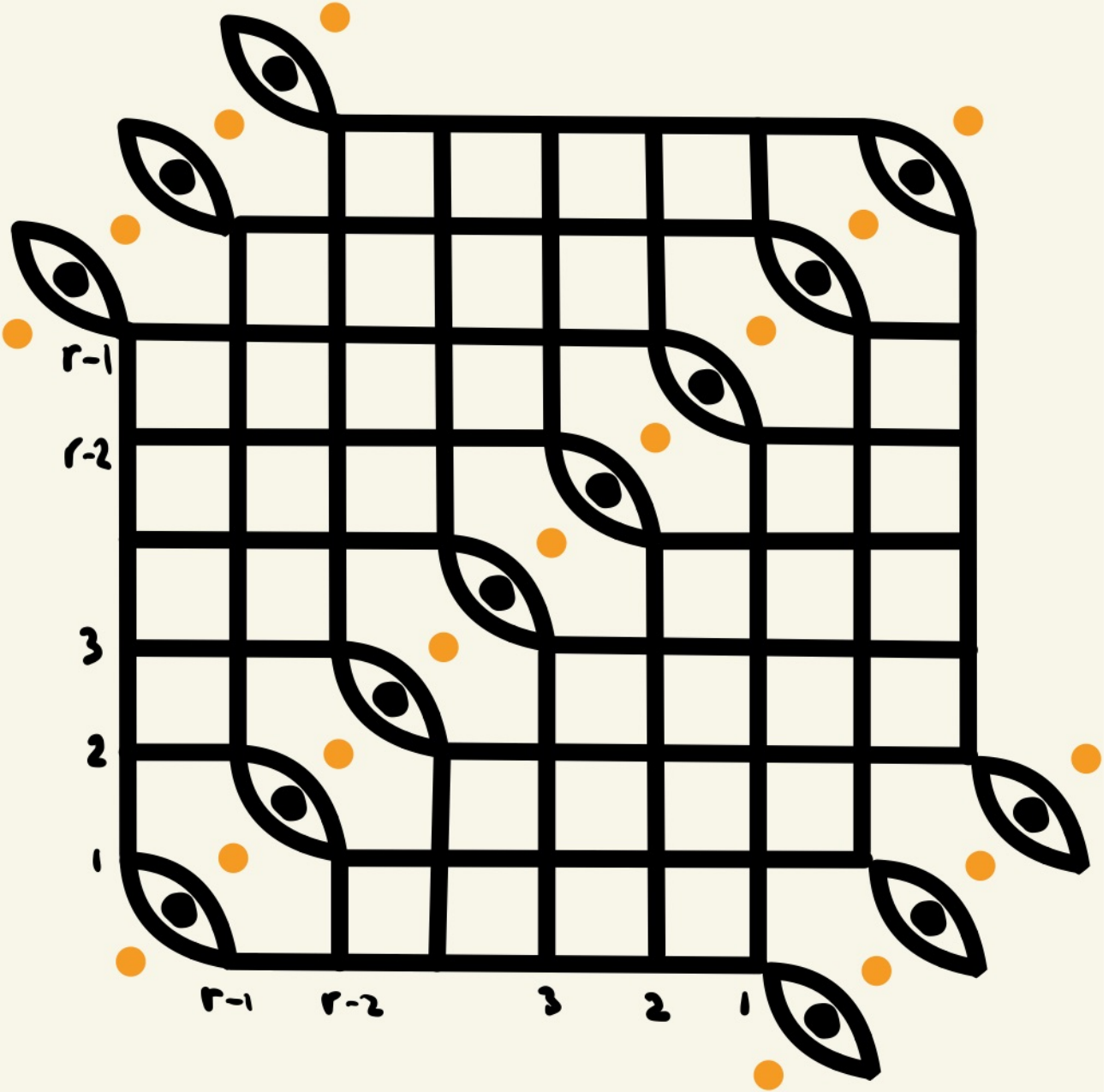}
\end{center}
\caption{Kawamata Lagrangian for ${1\over r}(1,1)$}\label{sdvasfvasfb}
\end{figure}

In the remainder of this section, we will apply Corollary~\ref{argaergqehqeth} in some examples, starting with
the Kawamata Lagrangian that corresponds to 
the cone over a rational normal curve (cyclic quotient singularity~${1\over r}(1,1)$.)

Since $a=b=1$, the Gauss word is
$\{r-1,r-2,\ldots,2,1,r-1,r-2,\ldots,2,1\}$.\break
The Lagrangian  is illustrated in Figure~\ref{sdvasfvasfb},
c.f.~Example~\ref{akjbcjaksbdakjs}.
Take a bounding cochain  $\mathfrak{b} = \sum\limits_{i \in \bZ_r^*} t_i \bar{w}_i$.
The hidden algebra $\mathscr{A}_0$ gives  contributions
$$\hbox{\rm to $dw_i$:\quad $t_it_j\bar w_j$ if $i<j$ and $-t_it_j\bar w_j$ if $i>j$}$$
$$\hbox{\rm to $w_iw_j$:\quad  $0$ if $i<j$,
$-t_jw_i-t_iw_j$ if $i>j$ and $-t_iw_i$ if $i=j$.}$$
There are two types of visible polygons (the top and the bottom row
on the right of Figure~\ref{sFGSFGADHADTEH}).
The first type is given by $1\le i<k<j\le r-1$ (and then $l=i+j-k$).
The contributions are
$$\hbox{\rm
to $dw_i$:\quad $t_kt_l\bar w_j$;\quad
to $dw_j$:\quad $-t_kt_l\bar w_i$;\quad
to $w_iw_j$:\quad 
$t_kw_l$; \quad
and 
to $w_jw_i$:\quad 
$-t_lw_k$.}$$
Finally, there is only one polygon (=bigon) of the second type, contributing
$$\hbox{\rm
to $dw_{r-1}$:\quad $-s\bar w_1$;\quad 
to $dw_1$:\quad $s\bar w_{r-1}$;\quad 
to $w_{r-1}w_1$:\quad $sw_0$.}$$

\begin{cor}
For the singularity ${1\over r}(1,1)$,
the skew-symmetric matrix $\mathfrak{D}$ of 
Corollary~\ref{argaergqehqeth} has entries  (for $i<j$) given by 
$$m_{ij}=\sum_{k=i}^{j-1}t_kt_{i+j-k}$$
 except that $m_{1,r-1}$ (if $r>2$) has an additional term $s$.
 \end{cor}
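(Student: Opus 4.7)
The plan is to extract the corollary directly by bookkeeping the contributions already enumerated in the paragraph immediately preceding it; no new polygon counts are needed. By Corollary~\ref{argaergqehqeth}, the entry $m_{ij}$ is by definition the coefficient of $\bar w_j$ in $dw_i=\mathfrak{m}_1^{\mathfrak{b}}(w_i)$, so the task is just to collect every term proportional to $\bar w_j$ arising from the bounding-cochain expansion of $\mathfrak{m}_1^{\mathfrak{b}}$.

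Fix $1\le i<j\le r-1$. First I would isolate the \emph{hidden algebra} contribution: from the display ``to $dw_i$: $t_it_j\bar w_j$ if $i<j$'' we immediately read off the coefficient $t_it_j$, which is exactly the $k=i$ summand $t_kt_{i+j-k}$ of the claimed sum. Second, I would isolate the contribution from the visible rectangles of the first type. These are parameterized by triples $1\le i'<k<j'\le r-1$ with $l=i'+j'-k$, and each contributes $t_kt_l\bar w_{j'}$ to $dw_{i'}$. Fixing $i'=i$ and $j'=j$ and letting $k$ vary over $i<k<j$, the surviving contributions are precisely $\sum_{k=i+1}^{j-1}t_kt_{i+j-k}$. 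Concatenating with step one gives the full sum $\sum_{k=i}^{j-1}t_kt_{i+j-k}$.

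Third, I would account for the visible bigon of the second type, which is the unique polygon of this flavor for the singularity $\tfrac{1}{r}(1,1)$. Its contribution is $s\bar w_{r-1}$ to $dw_1$, so it adds an extra $s$ to $m_{1,r-1}$ (and no other $m_{ij}$) whenever $r>2$. This explains the exceptional term in the statement. Skew-symmetry of $\mathfrak{D}$ would follow from running the same enumeration on $dw_j$: the hidden term becomes $-t_jt_i\bar w_i$ and the rectangle contributions flip sign to $-t_kt_l\bar w_i$, so $m_{ji}=-m_{ij}$ automatically.

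The only subtle point, which is the main (minor) obstacle, is matching indexing conventions: one must notice that the $k=i$ endpoint of the sum is supplied by the hidden algebra while the interior $i<k<j$ terms come from visible rectangles, so that the two qualitatively different sources glue into a single clean formula. Everything else is a direct transcription from the enumeration in the proof of Theorem~\ref{sGsgrsherheq} and Corollary~\ref{argaergqehqeth}, with no further geometric input required.
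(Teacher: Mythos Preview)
Your proposal is correct and takes essentially the same approach as the paper. The paper states this corollary without a separate proof, since the formula is read off directly from the enumeration of hidden-algebra, visible-rectangle, and bigon contributions in the paragraphs immediately preceding it; your bookkeeping matches that enumeration line for line, including the observation that the $k=i$ summand comes from the hidden algebra while the $i<k<j$ summands come from the visible rectangles.
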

 
Legtus analyze the vanishing locus of the matrix $\mathfrak{D}$.
There are a few cases depending on~$r$.

\begin{example}[$r=2$] 
In this case $\mathfrak{D}=0$, i.e.~there are no obstructions to deformations of the Kalck--Karmazyn algebra
$R$ over $k[t_1,s]$.
The multiplication is given by 
$w_1^2=-t_1w_1+s$.
So the deformation of the Kalck--Karmazyn algebra $R=k[w_1]/(w_1^2)$ is given by 
$k[w_1]/ (w_1^2 + t_1 w_1 - s)$.
\end{example}

\begin{example}[$r\ge 3$, first component]
One of the solutions of the matrix equation $\mathfrak{D}=0$ is to take
$t_2=t_3=\ldots=t_{r-2}=0$, $s=-t_1t_{r-1}$. This is clearly the only possibility if $r=3$.
We also claim that this is the only possibility when $r\ge5$ (although in this case the ideal 
generated by entries of $\mathfrak{D}$ is not reduced). Indeed, the entries right above the diagonal are
$t_1t_2,\ldots,t_{r-2}t_{r-1}$, so some of the variables have to vanish.
On the other hand, the entries of the form $m_{i,i+2}$ are
$t_it_{i+2}+t_{i+1}^2$. So if $t_i$ (or $t_{i+2}$) vanishes, then so does $t_{i+1}$.
This forces $t_2=t_3=\ldots=t_{r-2}=0$.

Over $k[t_1,t_{r-1},s] / (t_1t_{r-1}+s)\cong k[t_1,t_{r-1}]$, the deformed algebra is given by
  \[
k\langle w_1,\ldots, w_{r-1},t_1,t_{r-1} \rangle /
\left\langle
\begin{aligned}
    &w_1^2 + t_1 w_1, w_2^2, w_3^2, \ldots, w_{r-2}^2, w_{r-1}^2 + t_{r-1} w_{r-1}\\
    &w_i w_j \text{ for } i < j, w_i w_j \text{ for } r-1 > i > j > 1\\
    &w_i w_1 + t_1 w_i, w_{r-1} w_i + t_{r-1} w_i \text{ for } 1 < i < r-1\\
    &w_{r-1} w_1 + t_{r-1} w_1 + t_1 w_{r-1} + t_1 t_{r-1}
\end{aligned}
\right\rangle
\]
Generically (when $t_1, t_{r-1}\ne 0$), this gives a deformation of the Kalck-Karmazyn algebra $R_{r,1} = k \langle w_1,\ldots, w_{r-1} \rangle / (w_1,\ldots, w_{r-1})^2$ to the path algebra of the $(r-2)$-Kronecker quiver
\begin{tikzcd}[ampersand replacement=\&]
e_1 \arrow[r, "a_1", shift left=3] \arrow[r, "\ldots", shift right=0] \arrow[r, "a_{r-2}", shift right=4] \& e_2
\end{tikzcd}
via the isomorphism $w_1 \to a_1 - t_1 e_1, w_2 \to a_2, \ldots, w_{r-2}\to a_{r-2}, w_{r-1} \to - t_{r-1} e_2$. 
\end{example}

\begin{example}[$r=4$, second component]\label{ssGwrgwrg} 
If $r=4$, there is another possibility for vanishing of the matrix $\mathfrak{D}$, namely
$t_1=t_3=s+t_2^2=0$. This gives a deformation of the Kalck--Karmazyn algebra
$R_{4,1} = k \langle w_1, w_2, w_3\rangle / (w_1, w_2, w_3)^2$
over $k[t_2,s]/ (s+t_2^2)\cong k[t_2]$
to the algebra
  \[
k[t_2] \langle w_1,w_2,w_3 \rangle / 
\left\langle
\begin{aligned}
    &w_1w_2, w_2w_3, w_1^2, w_3^2, w_2^2 + t_2 w_2, \\
    &w_1w_3 - t_2 w_2, w_3w_1 - t_2 w_2 + t_2^2, \\
    &w_3w_2 + t_2 w_3, w_2w_1 + t_2 w_1
\end{aligned}
\right\rangle
\]
For $t_2\ne0$, this algebra is isomorphic to a $2\times2$ matrix algebra $\Mat_2(k)$.
\end{example}

In accordance with Conjecture~\ref{svSfgasgasrg}, we see that.
at least in the case of ${1\over r}(1,1)$, all deformations of the Kalck--Karamazyn algebra $R = \End(F_E)$ over $\Def^0_{F_E/\sE}$ are induced by deformations of the algebraic surface~$W$. According to \cite{Pi74}, the versal deformation space of $W$ is irreducible for $r \ne 4$ (although non-reduced for $r > 4$) and corresponds to Artin deformations of $W$ (deformations induced by a deformation of the resolution of singularities of $W$), while for $r = 4$ there is an additional component that corresponds to $\Q$-Gorenstein deformations. According to \cite{TU22}, general Artin deformations of $W$ give deformations of the Kalck--Karamazyn algebra to the path algebra of the Kronecker quiver, while $\Q$-Gorenstein deformations lead to deformations to $\Mat_2(k)$. So~Example~\ref{ssGwrgwrg} gives an explicit presentation for this deformation. In the next section, we will generalize this calculation to arbitrary $\Q$-Gorenstein deformations of Wahl singularities.

\begin{example} 
We wrote a computer code \cite{LTcode} implementation of Corollary~\ref{argaergqehqeth}.
For example, let $r=15$ and $a=4$.
This is the first case when  the versal deformation space $\Def_{(E\subset W)}$ 
of a cyclic quotient singularity
has three irreducible components,
as can be verified by the computer program \cite{zuniga}. 
In accordance with Conjecture~\ref{svSfgasgasrg}, $\Def^0_{F_E/\sE}$
also has three irreducible components given by the  ideals

$I_1=({t}_{13},{t}_{12},{t}_{11},{t}_{10},{t}_{9},{t}_{6},{t}_{
      5},{t}_{4},{t}_{3},{t}_{2},{t}_{1} {t}_{14}+s,{t}_{7}^{2}-{t}_{14},{t}_{1} {t}_{7}-{t}_{8})$,

$I_2=({t}_{13},{t}_{10},{t}_{9},{t}_{8},{t}_{7},{t}_{6},{t}_{5
      },{t}_{2},{t}_{1} {t}_{14}+s,{t}_{3}
      {t}_{11}-{t}_{14},{t}_{1} {t}_{11}-{t}_{12},{t}_{1} {t}_{3}-{t}_{4})$,
      
 $I_3=({t}_{14},{t}_{12},{t}_{10},{t}_{9},{t}_{8},{t}_{7},{t}_{6
      },{t}_{5},{t}_{3},{t}_{1},{t}_{2} {t}_{13}+s,{t}_{2}
      {t}_{11}-{t}_{13},{t}_{2}^{2}-{t}_{4})$.
\end{example}
      
\begin{example}       
Another singularity with $3$ irreducible components is ${1\over 19}(1,7)$, which was  analyzed in \cite{KSB}.
$\Def^0_{F_E/\sE}$ has $3$ irreducible components given by the  ideals

$I_1= ({t}_{4},{t}_{15},{t}_{6},{t}_{13},{t}_{16},{t}_{3},{t}_{1
      },{t}_{18},{t}_{8},{t}_{11},{t}_{5}
      {t}_{7}-{t}_{12},-{t}_{7}^{2}+{t}_{14},{t}_{7} {t}_{12}+s,$
      
      \ \hfill${t}_{2}
      {t}_{5}-{t}_{7},-{t}_{5}
      {t}_{12}+{t}_{17},-{t}_{2}
      {t}_{7}+{t}_{9},-{t}_{5}^{2}+{t}_{10})$
      
$I_2= ({t}_{4},{t}_{15},{t}_{7},{t}_{12},{t}_{16},{t}_{3},{t}_{2
      },{t}_{17},{t}_{1}
      {t}_{13}-{t}_{14},{t}_{1} {t}_{5}-{t}_{6},-{t}_{5}
      {t}_{13}+{t}_{18},$

      \ \hfill${t}_{6} {t}_{13}+s,{t}_{5}
      {t}_{8}-{t}_{13},-{t}_{5} {t}_{6}+{t}_{11},-{t}_{1} {t}_{8}+{t}_{9},-{t}_{5}^{2}+{t}_{10})$
      
$I_3= ({t}_{4},{t}_{15},{t}_{7},{t}_{12},{t}_{6},{t}_{13},{t}_{5
      },{t}_{14},{t}_{3} {t}_{16}+s,{t}_{2}
      {t}_{16}-{t}_{18},{t}_{1} {t}_{2}-{t}_{3},$

      \ \hfill$-{t}_{3} {t}_{8}+{t}_{11},-{t}_{1}
      {t}_{16}+{t}_{17},-{t}_{1}
      {t}_{8}+{t}_{9},-{t}_{2} {t}_{8}+{t}_{10})$.      
\end{example}

\section{ $\Q$-Gorenstein deformation of the Kalck--Karmazyn algebra}\label{adfbdfbsdns}

We fix coprime integers $0<q<n$. A cyclic quotient singularity ${1\over n^2}(1,nq-1)$
is called a {\it Wahl singularity}.
It can also be  described as
$ (xy=z^n)\subset {1\over n}(1,-1,q)$.
A special feature of the Wahl singularity is that it
admits 
 a $1$-dimensional versal $\Q$-Gorenstein\footnote{Recall that a flat deformation of an  algebraic surface over a smooth base is called $\Q$-Gorenstein
if the relative canonical divisor is $\Q$-Cartier.
}
 deformation space, namely
\begin{equation}\label{sFSGSrh}
(xy=z^n+t)\subset {1\over n}(1,-1,q)\times\bA^1_t.
\end{equation}
We compactify the Wahl singularity to a projective surface $W$ as in Section~\ref{sfbdfbdhzdtjts}
and let $\cW$ be the corresponding projective $\Q$-Gorenstein
deformation. After a finite base change, the total space of the deformation carries a 
torsion-free sheaf $\cH$ introduced by Hacking \cite{H13} (we use a version from  \cite{K21})
such that its restriction to a general fiber $\cW_t$ is an exceptional vector bundle.
It was proved by Kawamata \cite{K21}
that the restriction of the Kawamata vector bundle $\cF$  to $\cW_t$ splits as 
\begin{equation}\label{dfdfgadhadh}
\cF_t\cong \cH_t^{\oplus n}
\end{equation}
and so the Kalck--Karmazyn algebra 
$R=\End(F)$
deforms to the matrix algebra
$$\cR_t=\End(\cF_t)\cong\Mat_n(k).$$
By Tsen's theorem,  the flat
$k[t]$-algebra $\cR=\End(\cF)$ 
is an order over $\Spec k[t]$, i.e.~$\cR\otimes_{k[t]} k(t)\cong\Mat_n(k(t))$.
We will use  machinery of Kawamata Lagrangians 
to write down an explicit embedding $\cR\hookrightarrow \Mat_n(k[t])$ of $k[t]$-algebras.

The rest of this section is occupied by the proof of Theorem~\ref{wFGSRGARGARE}.
Our plan is as follows.
The anticanonical divisor $\cE\subset \cW$ is obtained (locally) by setting $z=0$ in \eqref{sFSGSrh}.
It~has an $A_{n-1}$ singularity at the node $P$.
It follows that $\cE$ is obtained from the versal family $\sE$ (see Remark~\ref{sfasfbgadrh}) 
by the base change $s=t^n$. Let $\bK\in \mathcal{F}(\mathbb{T}_1, \{s\})$ be the Kawamata Lagrangian.
In Lemma~\ref{EFwegwrGWR}, we will introduce an ad hoc bounding cochain~$\mathfrak{b}\in CF^1(\bK,\bK)$
and a flat deformation
$\cR_{\mathfrak{b}} = (\mathrm{hom}^0(\bK,\bK), 
\mathfrak{m}_2^\mathfrak{b})$
 of the Kalck-Karmazyn algebra $R=R_{n^2,nq-1}$ over $\bA^1_t$.
 In Lemma~\ref{afdgzdfhadrh}, we will 
 embed $\cR_{\mathfrak{b}}$ into $\Mat_n(k[t])$ and check formulas of Theorem~\ref{wFGSRGARGARE}.
 Finally, in Lemma~\ref{dfvdfbadfbdnadt}, we will show that 
$\cR_{\mathfrak{b}}$ is isomorphic to 
the endomorphism algebra
$\cR=\End(\cF)$
of the Kawamata vector bundle completing the proof of Theorem~\ref{wFGSRGARGARE}.

\begin{lem}\label{EFwegwrGWR}
Consider the locus of bounding cochains  
$\mathfrak{b} = \sum\limits_{i \in \bZ_{n^2}\setminus\{0\}} t_i \bar{w}_i\in CF^1(\bK,\bK)$
given by the following formulas:
$$s=t_n^n,\quad t_{nr}=t_n^r\ \hbox{\rm  for}\ r=1,\ldots,n-1\ \hbox{\rm  and}\ t_i=0\ \hbox{\rm  for}\ i\not\equiv0\mod n.$$
We denote $t_n$  by $t$.
This locus of bounding cochains (isomorphic to $\bA^1_t$)
 is in $\Def^0_{F_E/\sE}$.
In~particular, the algebra
$$\cR_\bb=(\mathrm{hom}^0(\bK,\bK), 
\mathfrak{m}_2^\mathfrak{b})$$
gives a flat deformation of the Kalck-Karmazyn algebra~$R$ over $\bA^1_t$.
\end{lem}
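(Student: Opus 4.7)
The plan is to verify, via Corollary~\ref{argaergqehqeth}, that the skew-symmetric matrix $\mathfrak{D}$ of coefficients $m_{ij}$ in $\mathfrak{m}_1^\bb(w_i) = \sum_j m_{ij} \bar{w}_j$ vanishes identically after the substitutions $s = t^n$, $t_{nr} = t^r$ for $r = 1,\ldots,n-1$, and $t_i = 0$ for $i \not\equiv 0 \pmod n$. Once $\mathfrak{D} \equiv 0$ is established on $\bA^1_t$, the locus sits inside $\Def^0_{F_E/\sE}$ by definition, and Corollary~\ref{argaergqehqeth} immediately gives that $\cR_\bb = (\mathrm{hom}^0(\bK, \bK), \mathfrak{m}_2^\bb)$ is an associative $k[t]$-algebra, flat of rank $n^2$, deforming $R$ over $\bA^1_t$.

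To compute $m_{ij}$, I would enumerate the polygons encoded in Figure~\ref{sFGSFGADHADTEH}. These split into two classes: virtual contributions from the triple products of the hidden $A_\infty$-algebra $\mathscr{A}_0$ recorded in Theorem~\ref{sGsgrsherheq}(2)--(3), obtained by inserting $\bb$ in two of the three slots around $w_i$; and visible contributions from immersed bigons, triangles, and rectangles on $\bT_1$ with boundary on $\bK$ whose non-$w_i$ corners are at self-intersections labeled by the support of $\bb$, namely at $n, 2n, \ldots, (n-1)n \in \bZ_{n^2}$. Terms weighted by positive powers of $s$ arise solely from visible polygons crossing the black puncture, and after $s = t^n$ they become monomials in $t$ that must be matched against the rest.

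The key combinatorial observation is that for the Wahl singularity ${1\over n^2}(1, nq-1)$ we have $b = n(n-q)-1 \equiv -1 \pmod n$, so the boxes of the Young diagram from Corollary~\ref{aethdthat} carrying labels divisible by $n$ are precisely those on the anti-diagonals $i + j \equiv 0 \pmod n$. This periodicity forces every polygon contributing to $m_{ij}$ under our substitutions to produce a monomial in $t$ of a single predictable degree determined by $j - i$, so all contributions to $m_{ij}$ collect into the form $c_{ij} t^{N(i,j)}$ for a combinatorial integer $c_{ij}$ that we need to show equals zero. The hard part is verifying $c_{ij} = 0$; I expect this will be done by a case analysis on the residue of $i \pmod n$ and on the position of the box labeled $j$ in the Young diagram, in each case exhibiting an explicit sign-reversing bijection on the contributing polygons using the orientation conventions from Theorem~\ref{sGsgrsherheq}(2)--(3) together with the catalogue of visible polygons in Figure~\ref{SfvsbfsbFDnD}.
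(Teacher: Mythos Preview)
Your plan matches the paper's approach: split contributions to $\mathfrak{m}_1^{\mathfrak b}$ into those from the hidden algebra $\mathscr{A}_0$ and those from visible polygons, then exploit the anti-diagonal pattern of the indices divisible by $n$ (your observation $b\equiv -1\pmod n$ is equivalent to the paper's observation that the subword of the Gauss word on indices divisible by $n$ is the palindrome $n(n{-}1),\ldots,2n,n,n,2n,\ldots,n(n{-}1)$).  The hidden contributions cancel immediately from this palindromic structure, with no further case analysis.

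Where your sketch diverges from the paper is in the visible part.  Your heuristic that all contributions to a given $m_{ij}$ carry ``a single predictable $t$-degree determined by $j-i$'' is not what the paper proves and would need its own argument.  Instead, the paper gives a direct involution on contributing rectangles: a permitted $x\times y$ rectangle (one whose NE and SW corners are green or orange and which contains no other orange points) is paired with the $x\times y$ rectangle obtained by reflecting across the anti-diagonal through its SW corner.  One checks (i) the two rectangles have the same output $\bar w_j$ (equation~\eqref{wrgargaerg}), (ii) permissibility is preserved under the pairing, via an anti-diagonal sliding argument for orange points, and (iii) the two contributions carry the same power of $t$ with opposite signs; this last step is the identity~\eqref{sDVSFBASFBAFD}, which is reduced by a shortening argument ($x\mapsto x-n$, $y\mapsto y-n$) to the base case $x+y=n$.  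So your expected ``sign-reversing bijection'' is exactly right, but the paper's geometric pairing replaces the residue case analysis you propose and handles the $t$-degree matching simultaneously rather than assuming it up front.
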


\begin{figure}[hbtp]
\begin{center}
\includegraphics[height=0.25\textheight]{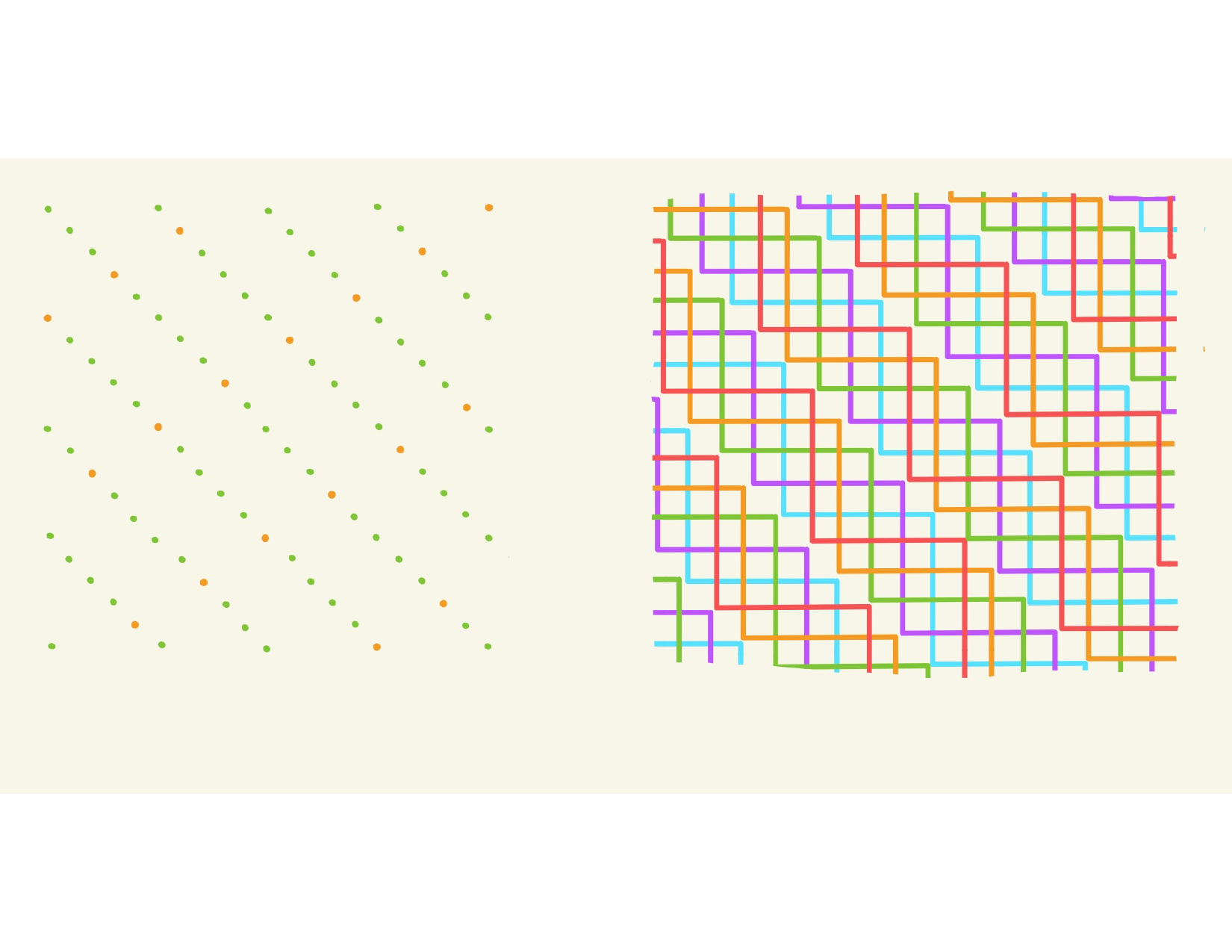}\qquad
\includegraphics[height=0.25\textheight]{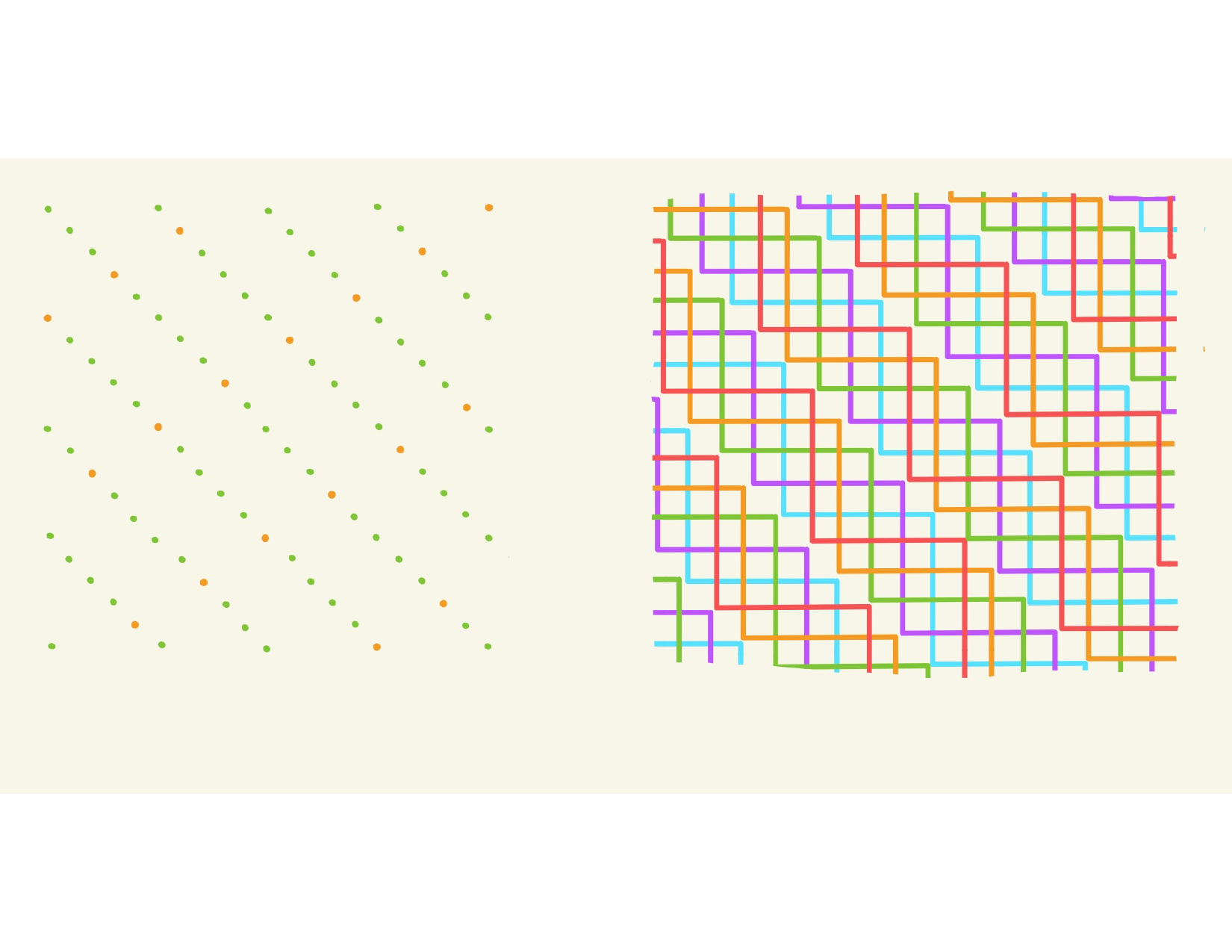}
\end{center}
\caption{$\Q$-Gorenstein deformation of the Kawamata Lagrangian}\label{afbzfbdfnzdgnadtjn}
\end{figure}

\begin{rmk}
The bounding cochain $\mathfrak{b}$ is illustrated on the left side of Figure~\ref{afbzfbdfnzdgnadtjn}, where green dots indicate self-intersection points $w_i$ of the Lagrangian $\bK$ (which is not shown) such that $t_i \ne 0$. The~right side of Figure~\ref{afbzfbdfnzdgnadtjn} explains our interest in these self-intersection points: a naive surgery deformation of $\bK$ as in Figure~\ref{dbarhathaetj} splits $\bK$ into $n$ isotopic Lagrangians, which we will later identify with mirrors of the Hacking vector bundle restricted to the general fiber of the family~$\sE$.
\end{rmk}

\begin{proof}
Since $a = nq - 1$, its inverse $b \equiv -nq - 1 \mod n^2$. The subword of the Gauss word formed by indices divisible by $n$ is
$n(n - 1), \ldots, 2n, n, n, 2n, \ldots, n(n - 1)$. It~follows that contributions to the differential $dw_i$ coming from the hidden algebra $\A_0$ (the left side of Figure~\ref{sFGSFGADHADTEH}) cancel each other out. It remains to show that contributions coming from the visible polygons (the right side of Figure~\ref{sFGSFGADHADTEH}) also cancel each other out. We interpret these polygons as lattice rectangles in $\bZ^2$. The contribution to the differential $dw_\alpha$ from a lattice rectangle is trivial unless both NE and SW corners of the rectangle are green or orange. In other words, these corners should belong to colored anti-diagonals from the left side of Figure~\ref{afbzfbdfnzdgnadtjn}. 
Furthermore, apart from these two corners, the lattice rectangles should not contain any orange points.
We claim that these {\it permitted rectangles} come in pairs as illustrated in Figure~\ref{s.DMNVBs,fbsjdhfb} (left and middle).
\begin{figure}[hbtp]
\begin{center}
\includegraphics[height=0.15\textheight]{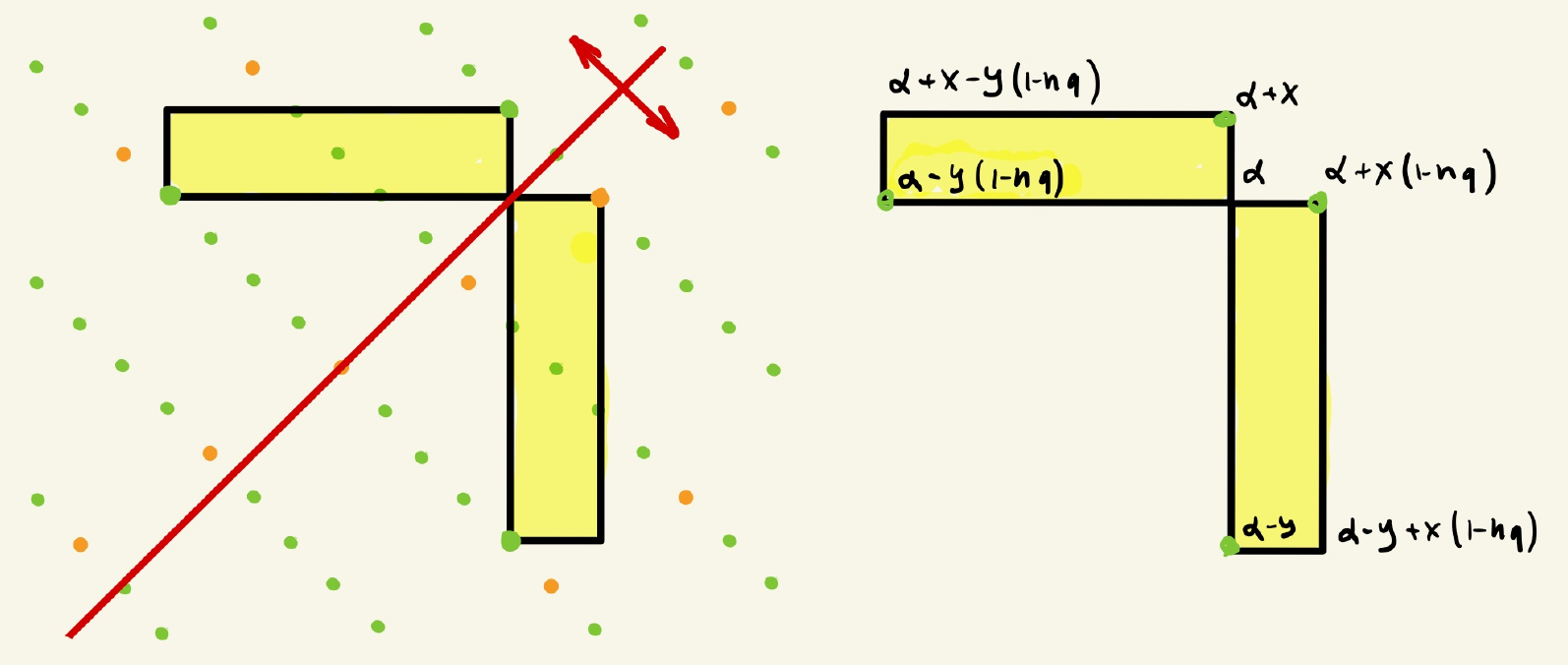}\ 
\includegraphics[height=0.15\textheight]{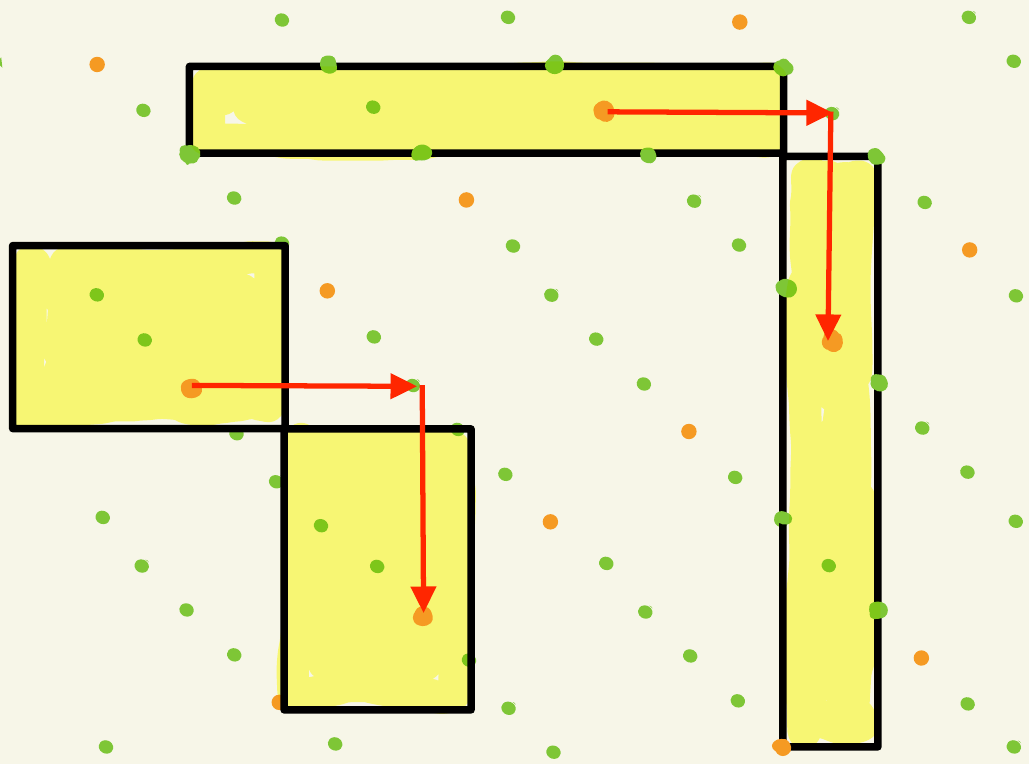}
\end{center}
\caption{Permitted rectangles come in pairs}\label{s.DMNVBs,fbsjdhfb}.
\end{figure}
Here both rectangles have shape $x\times y$.
Indeed, if the NE and SW corners of the top rectangle are green or orange then
$\alpha\equiv y\equiv -x \mod n$, which implies that these corners of the bottom rectangle are green or orange, and vice versa.
Furthermore, 
\begin{equation}\label{wrgargaerg}
\alpha+x-y(1-nq)\equiv \alpha-y+x(1-nq)\mod n^2,
\end{equation} 
 i.e. $w_{\alpha+x-y(1-nq)}=w_{\alpha-y+x(1-nq)}$. 
So both rectangles contribute  $\bar w_{\alpha+x-y(1-nq)}$ to the differential $dw_\alpha$ with coefficients that will be determined below.

Next, we claim that if one of the rectangles is not permitted then the other one is not permitted as well.
In other words, if one of the rectangles
contains orange dots (away from the NE and SW corners) then the other one does as well, 
as illustrated on the right side of Figure~\ref{s.DMNVBs,fbsjdhfb}.
Indeed, suppose the top rectangle contains an orange dot. We slide this point anti-diagonally (in the SE direction)
until it hits the bottom rectangle. We claim that one of the dots on this anti-diagonal within the bottom-right rectangle is orange.
In order to find this orange dot, 
we decompose the SE translation as illustrated on the right side of Figure~\ref{s.DMNVBs,fbsjdhfb}.
Namely, we first move to the right, hopping from one anti-diagonal of green/orange dots
to the next, until we get the point that can be moved down into the  bottom-right rectangle. This point will be orange.

Finally, we have to check that if both rectangles in the pair are permitted then the contributions to the differential $dw_\alpha$ given by the NW corner of the top rectangle and the SE corner of the bottom rectangle cancel each other out. Concretely, we need to check that
\begin{equation}\label{sDVSFBASFBAFD}
[\alpha-y]+[\alpha+x-nqx]+n^2\ (\hbox{\rm where we only add $n^2$ if}\  [\alpha+x-nqx]=0)
\end{equation}
$$=[\alpha-y+nqy]+[\alpha+x]+n^2\ (\hbox{\rm  where we only add $n^2$ if}\  [\alpha+x]=0)$$
Note that, if $y>n$, the condition \eqref{sDVSFBASFBAFD} is invariant under the change $y\mapsto y-n$, which corresponds to 
shortening the rectangles. Indeed, this obviously preserves permissibility of the rectangles.
Furthermore, the left hand side of \eqref{sDVSFBASFBAFD} increases by $n$
(note that $[\alpha-y+n]\ne0$ since otherwise the bottom rectangle is not permissible),
and the same is true for the right hand side.
We also claim that, if $x>n$, we can shorten the rectangles in the other direction, $x\mapsto x-n$.
Again, this obviously preserves permissibility of the rectangles.
We claim that both sides of \eqref{sDVSFBASFBAFD}  decrease by $n$ under this operation.
Indeed, neither $[\alpha+(x-n)]$ nor $[\alpha+(x-n)-nq(x-n)]$ is equal to $0$ by permissibility of the rectangles. 
Furthermore, if either $[\alpha+x]$ nor $[\alpha+x-nqx]$ is equal to $0$ then the formula works because 
$n^2$ is added to the formula to compensate.

By the above, we can assume that $x,y\le n$. Since 
$\alpha\equiv y\equiv -x \mod n$, $x+y=n$ or $2n$. The second case is, however,
impossible because then $x=y=n$ and every $n\times n$ square with green or orange vertices contains orange 
along the anti-diagonal, which is not permitted. So $0<x<n$ and $y=n-x$.
We rewrite \eqref{sDVSFBASFBAFD} as follows:
\begin{equation}\label{szs,fjbvadfhbav}
[\alpha-n+x]+[\alpha+x-nqx]+n^2\ (\hbox{\rm where we only add $n^2$ if}\  [\alpha+x-nqx]=0)
\end{equation}
$$=[\alpha-n+x-nqx]+[\alpha+x]+n^2\ (\hbox{\rm  where we only add $n^2$ if}\  [\alpha+x]=0)$$
But this is clear:
$$
[\alpha-n+x]=[\alpha+x]-n+n^2\ (\hbox{\rm  where we only add $n^2$ if}\  [\alpha+x]=0),$$
and
$$
[\alpha-n+x-nqx]=[\alpha+x-nqx]-n+n^2\ (\hbox{\rm where we only add $n^2$ if}\  [\alpha+x-nqx]=0).$$
This completes the proof.
\end{proof}


\begin{figure}[hbtp]
\begin{center}
\includegraphics[width=0.45\textwidth]{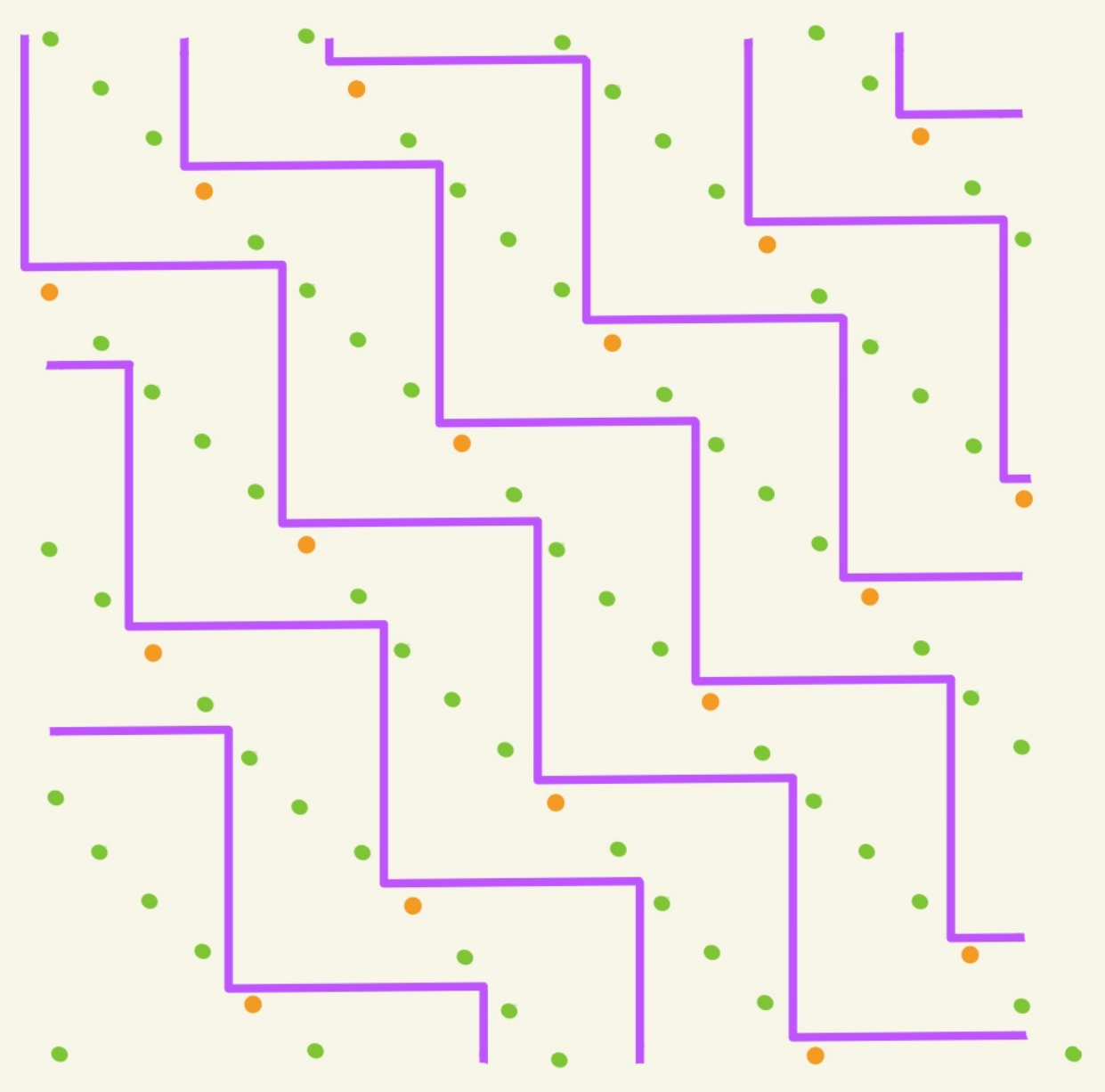}\qquad
\includegraphics[width=0.45\textwidth]{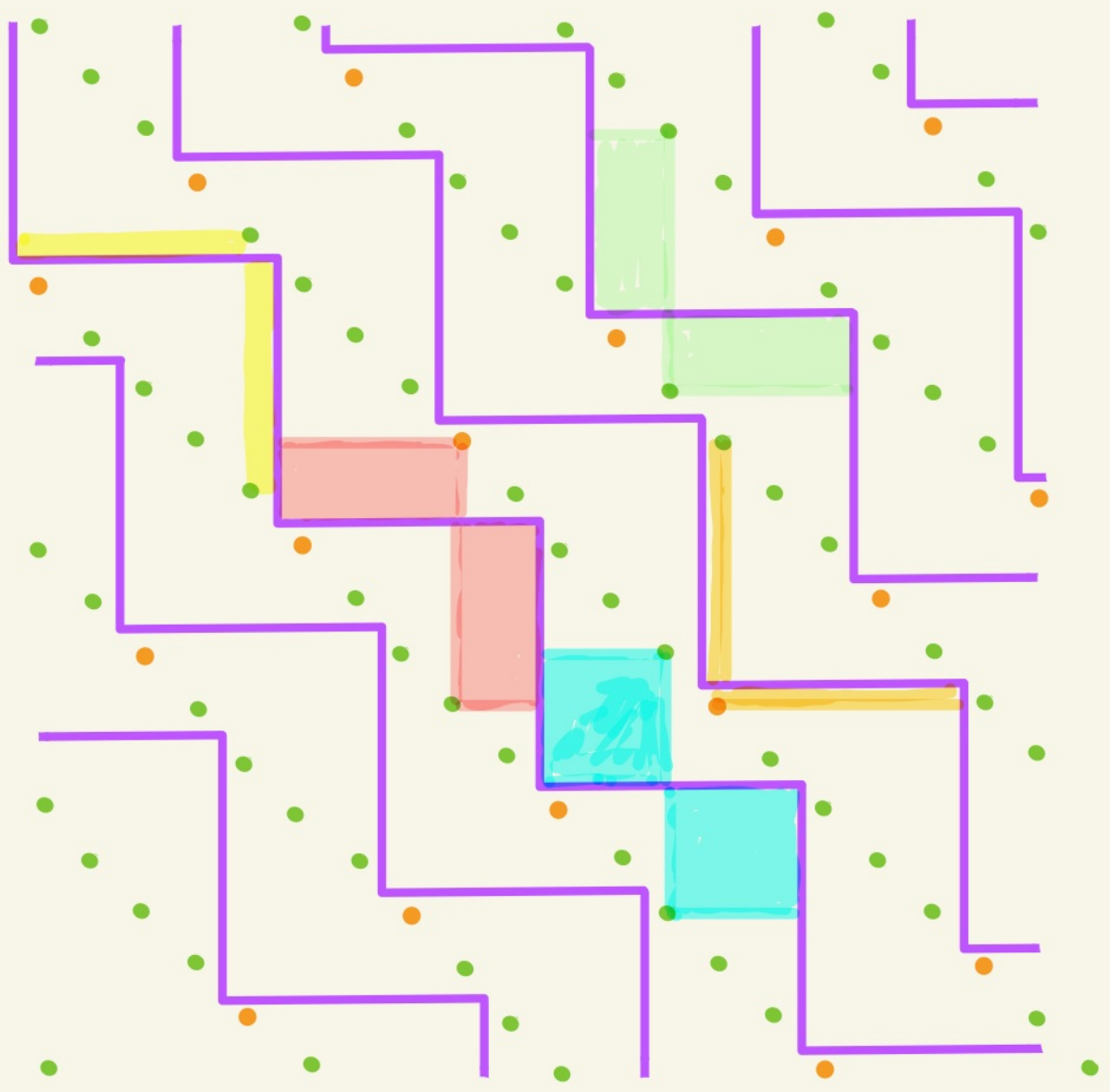}
\end{center}
\caption{The Hacking Lagrangian $\bH_{n,q}$ (left) and 
Lemma~\ref{wrgargaergh} (right).}\label{argargahaeha}\label{sgsFGsgsgsR}
\end{figure}

The relative Fukaya category 
$\mathcal{F}(\mathbb{T}_1, \{s \})$ is s $k[s]$-linear triangulated category.
Its~base change $\mathcal{F}(\mathbb{T}_1, \{s\}) \otimes_{k[s]} k[s^{\pm 1}]$
corresponds under mirror symmetry to the category of perfect complexes on the complement of the special fiber of the family  $\sE \to \bA^1_s$. 
Motivated by Figure~\ref{afbzfbdfnzdgnadtjn}, we  define an object $\bH\in\mathcal{F}(\mathbb{T}_1, \{s\}) \otimes_{k[s]} k[s^{\pm 1}]$. We will show in Lemma \ref{sVsgsrGwrgh} that its specialization to any $s\ne0$
is a mirror of the Hacking vector bundle $\mathcal{H}|_{\sE_s}$ (up to tensoring with a degree $0$ line bundle).

\begin{definition}
Let the {\it Hacking Lagrangian} $\bH := \bH_{n,q}$ be a Lagrangian illustrated (on the universal cover of the torus) in Figure~\ref{argargahaeha}  and equipped with a local system over $k[s^{\pm 1}]$ whose monodromy is given by $s^{-1}$. Note that $\bH$ has  one ``vertical'' and  one ``horizontal'' segment.
When doing computations for a Lagrangian endowed with a local system, one trivializes the local system on 
outside of a specified marked point, 
which we put near the orange dot where $\bH$ bends. 
Holomorphic curve counts are twisted by the monodoromy of the local system whenever the boundary of the holomorphic polygon passes through this marked point. 
 \end{definition}

\begin{lem}\label{sVsgsrGwrgh}
For $s\ne0$, the specialization $\bH_s$ of the Hacking Lagrangian
is the mirror of the restriction of the Hacking vector bundle $\cH|_{\sE_s}$
tensored with a degree $0$ line bundle.
\end{lem}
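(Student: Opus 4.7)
The plan is to identify $\bH_s$ with $\cH|_{\sE_s}$ (up to a degree $0$ twist) by combining a surgery decomposition of the Kawamata Lagrangian with Kawamata's algebraic splitting \eqref{dfdfgadhadh} and uniqueness of exceptional objects on the nodal cubic $E_1$. The surgery picture on the right of Figure~\ref{afbzfbdfnzdgnadtjn} strongly suggests that resolving $\bK$ at each of the $n-1$ green self-intersection points from Lemma~\ref{EFwegwrGWR} produces $n$ pairwise isotopic immersed curves, each isotopic to~$\bH$, so the first task is to promote this geometric picture to an equivalence in the split-closed derived relative Fukaya category.

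First, I would make this decomposition rigorous in $\mathcal{F}(\bT_1,\{s\})\otimes_{k[s]}k[s^{\pm 1}]$, by showing that $(\bK,\mathfrak{b})$ is quasi-isomorphic to a direct sum $\bigoplus_{i=1}^{n}\bH(L_i)$, where $L_1,\ldots,L_n$ are rank-one local systems. This is the Floer-theoretic incarnation of Lagrangian surgery at the green dots. The ansatz $s=t_n^n$ and $t_{nr}=t_n^r$ in Lemma~\ref{EFwegwrGWR} is designed precisely so that, after surgery, each summand carries a local system of monodromy $s^{-1}$ around the natural loop based near the orange puncture where $\bH$ bends, matching the brane data defining $\bH$. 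The proof of Lemma~\ref{EFwegwrGWR} already checks the Maurer--Cartan/obstruction cancellations that are needed for this splitting; what remains is to read off from the surgery that the summands are isotopic to $\bH$ and to identify the induced local systems.

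Second, I would invoke the algebraic decomposition $\cF|_{\sE_s}\cong(\cH|_{\sE_s})^{\oplus n}$ (a restriction of \eqref{dfdfgadhadh} to the anticanonical divisor after the base change $s=t^n$ of Remark~\ref{sfasfbgadrh}). Since $(\bK,\mathfrak{b})$ is by construction the mirror of a deformation of $F|_E$ over $\bA^1_t$, and since by Kawamata's theorem this deformation is $\cF|_\sE$, HMS forces the two decompositions to match summand-by-summand up to tensoring with degree $0$ line bundles. Unique matching at the level of individual summands is then obtained from the classification of simple/exceptional perfect complexes on the nodal cubic $E_1$: such objects are rigid up to twist by a degree $0$ line bundle, which under HMS corresponds exactly to altering the local system on $\bH$ by a topologically trivial character.

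The main obstacle will be making the surgery decomposition $(\bK,\mathfrak{b})\simeq\bigoplus_{i=1}^{n}\bH(L_i)$ precise, in particular tracking signs and local-system contributions through the bigons and rectangles that cross the green dots, so that the $A_\infty$-module structure on each summand reproduces exactly the monodromy $s^{-1}$ defining $\bH$. A useful cross-check (or shortcut) is to verify directly that $\bH_s$ is simple in $\mathcal{F}(\bT_1,\{s\})\otimes k[s^{\pm 1}]$ by computing its Floer endomorphism algebra from the self-intersections between its horizontal and vertical strands: the local system of monodromy $s^{-1}$ is exactly what is needed to make all degree $1$ generators exact under $\mathfrak{m}_1$, so that $\End(\bH_s)\cong k$. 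Combining simplicity with matching rank and slope (easily extracted from the homology class of $\bH$ on $\bT_1$) against $\cH|_{\sE_s}$ then gives the desired identification directly via the uniqueness statement above.
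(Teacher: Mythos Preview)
Your main strategy has a circularity problem. In the second step you assert that ``by Kawamata's theorem this deformation is $\cF|_\sE$'', i.e.\ that the bounding cochain $\mathfrak{b}$ of Lemma~\ref{EFwegwrGWR} produces precisely the mirror of the restricted Kawamata bundle. But this is exactly the content of Lemma~\ref{dfvdfbadfbdnadt}, which is proved \emph{after} Lemma~\ref{sVsgsrGwrgh} and explicitly invokes Lemma~\ref{sVsgsrGwrgh} in its proof. Kawamata's splitting \eqref{dfdfgadhadh} only tells you that $\cF_t\cong\cH_t^{\oplus n}$ on $\cW_t$; it says nothing about which of the many deformations of $F|_E$ over $\sE$ the pair $(\bK,\mathfrak{b})$ corresponds to. So the surgery decomposition $(\bK,\mathfrak{b})\simeq\bigoplus\bH(L_i)$, even if made rigorous, cannot be matched against $(\cH|_{\sE_s})^{\oplus n}$ without already knowing the lemma you are trying to prove.

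Your ``cross-check or shortcut'' at the end is in fact the correct argument, and it is essentially what the paper does: compute directly that the mirror of $\bH_s$ is a vector bundle of rank~$n$ (via $\RHom$ with skyscrapers, or equivalently the homology class on $\bT_1$), of degree~$q$ (via $R\Gamma$ and Riemann--Roch), and simple; then observe that $\cH|_{\sE_s}$ has the same invariants by \eqref{dfdfgadhadh}, and conclude by the classification of stable bundles on the nodal cubic \cite{Bu}. Two small corrections to your sketch: on a curve of arithmetic genus~$1$ these bundles are \emph{simple} (and hence stable), not exceptional, since $\Ext^1(\cH_s,\cH_s)\cong k$ by Serre duality; correspondingly, the Floer cohomology of $\bH_s$ should be $k\oplus k[-1]$, not~$k$, so you do not want all degree~$1$ generators to become exact.
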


\begin{proof}
Let $\cH_s \in \Perf(\sE_s)$ be an object that corresponds to $\bH_s$ under homological mirror symmetry. Using Figure~\ref{figurehms}, we compute $\RHom(\cH_s, \cO_q) = k^n$ for every closed point $q \in \sE_s$. 
So~$\cH_s$ is a vector bundle of rank $n$. A surgery illustrated on the right side of Figure~\ref{s.DMNVBs,fbsjdhfb} shows that $c_1(\cH_s) = \frac{1}{n} c_1(\cF|_{\sE_s})$. By \eqref{dfdfgadhadh}, it follows that $\cH_s$ and $\cH|_{\sE_s}$ are vector bundles of the same rank and degree on the $1$-nodal cubic curve $\sE_s$. In fact, using Figure~\ref{figurehms} and homological mirror symmetry, we see that $R\Gamma(\sE_s, \cH_s) = k^q$ and $R\Gamma(\sE_s, \cF|_{\sE_s}) = k^{nq}$. In particular, $\deg \cH_s = \deg \cH|_{\sE_s} = q$ by Riemann-Roch. Since both $\cH|_{\sE_s}$ and $\cH_s$ are simple vector bundles (here we use \eqref{dfdfgadhadh} for $\cH|_{\sE_s}$ and homological mirror symmetry for $\cH_s$), they are both stable and isomorphic up to tensoring with a degree $0$ line bundle \cite{Bu}.
\end{proof}

Next, we work with the category
$\mathcal{F}(\mathbb{T}_1, \{s\}) \otimes_{k[s]} k[t]$
where $s=t^n$. We view the Kawamata Lagrangian $\bK$ endowed with a bounding cochain $\bb$
of Lemma~\ref{EFwegwrGWR} as an object $\bK_\bb$ of this category.

\begin{lem}\label{wrgargaergh}
We can pullback both $\bK_\bb$ and the Hacking Lagrangian $\bH$ to the category
$\mathcal{F}(\mathbb{T}_1, \{s\}) \otimes_{k[s]} k[t^{\pm1}]$. Then 
$\Hom(\bK_{\mathfrak b},\bH)\cong k[t,t^{-1}]^{\oplus n}$. 
\end{lem}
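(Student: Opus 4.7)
\emph{Plan.} After passing to the generic fiber (i.e.\ inverting $t$, or equivalently $s=t^n$), the category $\mathcal{F}(\bT_1,\{s\})\otimes_{k[s]}k[t^{\pm 1}]$ is $k[t^{\pm 1}]$-linear, and $CF(\bK_\bb,\bH)$ is a finitely generated complex of free $k[t^{\pm 1}]$-modules. The goal is to identify its cohomology with $k[t^{\pm 1}]^{\oplus n}$, concentrated in degree $0$. The strategy is a direct symplectic computation that refines the permitted-rectangle combinatorics used in the proof of Lemma~\ref{EFwegwrGWR}.

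The first step is to enumerate the generators of $CF(\bK,\bH)$ by listing the transverse intersections of $\bK$ and $\bH$ in $\bT_1\setminus\{s\}$, read off from Figure~\ref{sgsFGsgsgsR}. Because $\bH$ consists of a single horizontal and a single vertical segment in the universal cover, while $\bK$ has the grid-like structure established in Theorem~\ref{argargerhwetb}, the intersections split into two families (one per segment of $\bH$) of easily enumerated size, with the grading and spin conventions placing every generator in degree $0$ or $1$.

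Next I would compute the deformed differential $\mathfrak{m}_1^\bb$. Each term $\mathfrak{m}_{k+1}(\bb,\ldots,\bb,-)$ counts $(k+2)$-gons with $k$ corners at the ``green'' self-intersections of $\bK$ where $\bb$ is supported, one corner at a generator of $CF(\bK,\bH)$, and one output corner, weighted by $s=t^n$ for each crossing of the puncture and by the monodromy $s^{-1}=t^{-n}$ of the local system on $\bH$ each time the boundary passes the marked point on $\bH$. The plan is to pair these polygons in the same style as Lemma~\ref{EFwegwrGWR}, so that all but a controlled family of contributions cancel; the surviving terms then assemble into an explicit $k[t^{\pm 1}]$-matrix whose cokernel is free of rank $n$ and whose kernel vanishes.

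The main obstacle is this polygon bookkeeping. Unlike the self-Hom computation, where the pairing argument caused every contribution to cancel, here one boundary lies on $\bH$, so the pairings are obstructed in a controlled way by the $t^{-n}$-twist from the local system and by bigons that sweep across the puncture $s$; the surviving differential must become an isomorphism on a codimension-$n$ subspace precisely after inverting $t$. As a sanity check, the answer $k[t^{\pm 1}]^{\oplus n}$ is consistent with the mirror-symmetric prediction: by Lemma~\ref{sVsgsrGwrgh} each specialization $\bH|_{t=t_0}$ is mirror to the stable bundle $\cH|_{\sE_{t_0^n}}$, and by Kawamata's splitting~\eqref{dfdfgadhadh} the mirror of $\bK_\bb|_{t=t_0}$ should be $\cH^{\oplus n}$ up to a degree-$0$ twist, giving $\Hom(\cH^{\oplus n},\cH)=\End(\cH)^{\oplus n}=k^n$ fiberwise; constancy of fiber dimension over the PID $\Spec k[t^{\pm 1}]$ would then force the global Hom to be free of rank $n$.
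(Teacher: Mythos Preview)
Your overall strategy---enumerate the intersection points of $\bK$ and $\bH$ and control $\mathfrak{m}_1^\bb$ via the rectangle-pairing combinatorics of Lemma~\ref{EFwegwrGWR}---is exactly the paper's approach. But your expectation about the \emph{outcome} of that bookkeeping is wrong, and this is a genuine gap.

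The paper shows that the deformed differential on $CF(\bK_\bb,\bH)$ is \emph{identically zero}. The $2n$ intersection points ($n$ on the vertical segment of $\bH$, $n$ on the horizontal) split as $n$ in degree~$0$ and $n$ in degree~$1$; once the differential vanishes, $\Hom^0=k[t^{\pm 1}]^{\oplus n}$ is immediate. The point you have backwards is the role of the local system: the monodromy $s^{-1}$ on $\bH$ is precisely what \emph{makes} the pairing argument go through, not what obstructs it. In each pair of permitted rectangles (right side of Figure~\ref{sgsFGsgsgsR}), the boundary of one rectangle passes the marked point on $\bH$ and picks up a factor of $s^{-1}$; this contributes exactly the $-n$ term needed for the analogue of~\eqref{sDVSFBASFBAFD} to hold, and the cancellation condition reduces to the elementary identity $[-nqy]=-n+[n-nqy]$ (with the usual $+n^2$ correction when $[n-nqy]=0$). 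There are no surviving terms, no bigons through $s$ to worry about separately, and no nontrivial matrix to analyze.

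Your proposed endgame is also dimensionally inconsistent: with $n$ generators in each degree, a differential with trivial kernel would force $\Hom^0=\ker d=0$, not rank~$n$. Finally, note that your mirror-symmetric sanity check is circular as stated: the identification of the mirror of $\bK_\bb$ with $\cH^{\oplus n}$ on fibers is Lemma~\ref{dfvdfbadfbdnadt}, which is proved \emph{using} the present lemma.
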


\begin{proof}
In Figure~\ref{afbzfbdfnzdgnadtjn}, the Kawamata Lagrangian (which is not shown) follows the grid (which includes orange and green points), while the Hacking Lagrangian goes halfway between the lines of the grid.
It follows that the Kawamata and Hacking Lagrangians intersect in $n$ points along the vertical segment of the Hacking Lagrangian and $n$ points along its horizontal segment. This will show that $\Hom(\bK_{\mathfrak b}, \bH) \cong k[t, t^{-1}]^{\oplus n}$ if we can prove that the differentials in the $A_\infty$-module $CF(\bK_\bb,\bH)$ vanish. The proof is the same as the proof of Lemma~\ref{EFwegwrGWR}: contributions to each differential come in pairs of permitted rectangles (illustrated on the right side of Figure~\ref{sgsFGsgsgsR}), which cancel each other out.
Concretely, in the notation of the proof of Lemma~\ref{EFwegwrGWR}
(see Figure~\ref{s.DMNVBs,fbsjdhfb}), the permitted rectangles have parameters $\alpha = y - nqy$ and $x + y = n$. Instead of \eqref{sDVSFBASFBAFD}, the cancellation condition becomes
$$[\alpha - y] = -n + [\alpha + x] + n^2\ (\hbox{\rm where we only add $n^2$ if}\ [\alpha + x] = 0.)$$
Two of the terms in the formula  \eqref{sDVSFBASFBAFD} do not appear in our case because the Hacking Lagrangian bends where the Kawamata Lagrangian intersects itself.
Also, a new term of $-n$ appears because the local system of the Hacking Lagrangian
contributes to the top rectangle of each pair. After simplification, the condition becomes
$[-nqy] = -n + [n - nqy] + n^2$, where we only add $n^2$ if $[n - nqy] = 0$. This~identity  is straightforward.
\end{proof}

\begin{lem}\label{afdgzdfhadrh}
Action of $\cR=\End(\bK_{\mathfrak b})$ on $\Hom(\bK_{\mathfrak b},\bH)\cong k[t,t^{-1}]^{\oplus n}$
gives a $k[t]$-linear map $\cR\to\Mat_n(k[t])$ 
that sends an element
$\sum\limits_{k\in\bZ_{n^2}} a_kw_k\in\cR$ 
to a matrix $A\in\Mat_n(k[t])$ with the matrix entries given in Theorem~\ref{wFGSRGARGARE}.
\end{lem}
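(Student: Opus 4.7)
The plan is a direct computation: fix a $k[t]$-basis of $\Hom(\bK_{\mathfrak b}, \bH)$, then enumerate the holomorphic polygons contributing to the $\cR$-action on each basis element and check that the resulting matrix coefficients match the formulas of Theorem~\ref{wFGSRGARGARE}.

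First I would fix notation. By Lemma~\ref{wrgargaergh}, $\bK$ and $\bH$ meet in $n$ points on the horizontal segment of $\bH$ and $n$ points on its vertical segment; call the horizontal ones $h_1,\ldots,h_n$ and use them as a $k[t]$-basis. The remaining $n$ vertical intersection points equal $t^{c_i} h_{\sigma(i)}$ for shifts $c_i$ and a permutation $\sigma$ that I would read off from the small triangles going through the bend of $\bH$ near the orange puncture; the monodromy $s^{-1}$ of the local system absorbs exactly one factor of $s=t^n$ per such triangle. I index the $h_i$ so that $[inq]$ is the $y$-coordinate of the lift of $h_i$ in the universal cover (relative to a fixed origin aligned with the grid structure of $\bK$ from Figure~\ref{afbzfbdfnzdgnadtjn}).

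Second, for each basis element $w_k$ of $\cR$ and each $h_j$, I would enumerate the rectangles contributing to $\mathfrak{m}_2^{\mathfrak b}(w_k,h_j)$. As in the proofs of Lemmas~\ref{EFwegwrGWR} and~\ref{wrgargaergh}, the grid-like structure of $\bK$ forces every contributing polygon to be a lattice rectangle in the universal cover with one corner at $h_j$, one at an output $h_i$, a designated corner at the $w_k$-insertion, and all other corners at green self-intersections of $\bK$, each producing a single factor of $t$ (since $t_{rn}=t^r$ and the $\mathfrak b$-insertion formula collapses powers of $\mathfrak b$ to one factor of $t$ per green corner). A rectangle is permitted exactly when its interior is free of orange points and it does not cross the bend of $\bH$. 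Rectangles wrapping past the puncture $\{s\}$ pick up an extra $s=t^n$, and those wrapping past the vertical segment of $\bH$ convert their output, via the relation $h_{\sigma(i)}^{\mathrm{vert}}=t^{c_i}h_{\sigma(i)}$, into a positive power of $t$ times a basis vector.

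Third I match with the theorem. The condition $rn\le [inq]$ counts how many times a rectangle of shape ``width $\times$ height'' must wrap to reach the row of $h_i$; the inequality
$[inq]+i<rn+\min_{k=i+1,\ldots,j}([knq]+k)$
is precisely the permitted-rectangle condition preventing another $h_k$ or any orange dot from lying inside. The splitting of the matrix entry $A_{ij}$ into the three regimes $i<j$, $i>j$, $i=j$ corresponds to rectangles that travel to the right, to the left, or close up on themselves; the two sums in the $i<j$ case distinguish rectangles in a fixed fundamental domain from those wrapping once through $\{s\}$ (the latter providing the ``max'' conditions and the exponent shift $r-1$). The $i=j$ case absorbs contributions from the hidden triple products $\mathfrak{m}_3(\mathfrak b,w_k,h_j)$ and from bigons that close around $\{s\}$, and gives the diagonal subtractions $-\sum t^r a_{rn}$.

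The principal difficulty is bookkeeping. One needs to verify that signs agree with the orientation conventions of Theorem~\ref{sGsgrsherheq} (the minus sign on the $i>j$ sum arising from the same skew-symmetry that drove the cancellation in Lemma~\ref{wrgargaergh}) and that no term acquires a negative power of $t$, so that the map lands in $\Mat_n(k[t])$ rather than in $\Mat_n(k[t,t^{-1}])$. Positivity follows because each green corner contributes $t$, each traversal of $\{s\}$ contributes $t^n$, and the horizontal/vertical shifts $c_i$ are bounded by $n$ and enter with the correct sign. Given the explicit combinatorial description, the verification reduces to exhibiting a bijection between the permitted rectangles enumerated above and the index sets appearing in the three sums of Theorem~\ref{wFGSRGARGARE}; this is routine once the bijection on a single wrapping class is set up.
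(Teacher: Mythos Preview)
Your approach --- fix a basis and enumerate contributing rectangles --- is exactly the paper's. Two specifics in your plan, however, would not survive the computation.

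First, the basis and the spurious relation. Of the $2n$ intersection points of $\bK$ with $\bH$, only $n$ lie in Floer degree~$0$; the paper takes those (the ``red dots'' of Figure~\ref{sRGrwgrherqhq}, which from the rectangle descriptions lie along the \emph{vertical} segment of~$\bH$) as the basis $e_1,\dots,e_n$. Your claim that the remaining $n$ points ``equal $t^{c_i}h_{\sigma(i)}$'' via small triangles near the bend cannot hold: either those points sit in degree~$1$ (so no such degree-$0$ identity exists), or else they sit in degree~$0$ and Lemma~\ref{wrgargaergh} says all differentials in $CF(\bK_\bb,\bH)$ vanish, leaving no mechanism to produce the relation. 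Simply use the $n$ degree-$0$ generators directly and discard the horizontal points.

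Second, the geometry behind the two sums for $i<j$. These are not distinguished by whether the rectangle passes through~$\{s\}$; rectangles of either kind may cover~$\{s\}$ (that, together with green corners, is precisely what produces powers of~$t$). The actual dichotomy is directional. One family has its \emph{east} side on the vertical segment of $\bH$ (with $e_i,e_j$ at NE, SE, the $w$-input at the NW corner, and a green/orange point at SW). The other family runs from $e_i$ to the right to a green/orange corner, drops down to the $w$-input, and returns left to $e_j$ on the \emph{next} lift of $\bH$'s vertical segment in the universal cover; it is this family that crosses the marked point of the local system on~$\bH$ and picks up the monodromy factor~$s^{-1}$, which is what accounts for the exponent shift $t^{r-1}$ and the index shift by $-n$ in the second sum of Theorem~\ref{wFGSRGARGARE}. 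For $i>j$ only the single ``west side on~$\bH$'' family appears (NE corner green/orange, SE corner the $w$-input), and the diagonal $i=j$ comes entirely from invisible polygons as in Corollary~\ref{argaergqehqeth} --- there are no ``bigons closing around~$\{s\}$'' in that case.
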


\begin{proof}
We study the product $\Hom(\bK_{\bb},\bH)\otimes \Hom(\bK_\bb,\bK_\bb)\to \Hom(\bK_\bb,\bH)$
using the  basis $\{e_1,\ldots,e_n\}$ of $\Hom(\bK_{\bb},\bH)$ given by red dots in Figure~\ref{sRGrwgrherqhq} (where only the Hacking Lagrangian 
$\bH$ is shown). The corresponding matrix $A\in\Mat_n(k[t])$ is computed using the  holomorphic polygons illustrated in Figure~\ref{sRGrwgrherqhq}.

\begin{figure}[hbtp]
\begin{center}
\includegraphics[height=0.22\textheight]{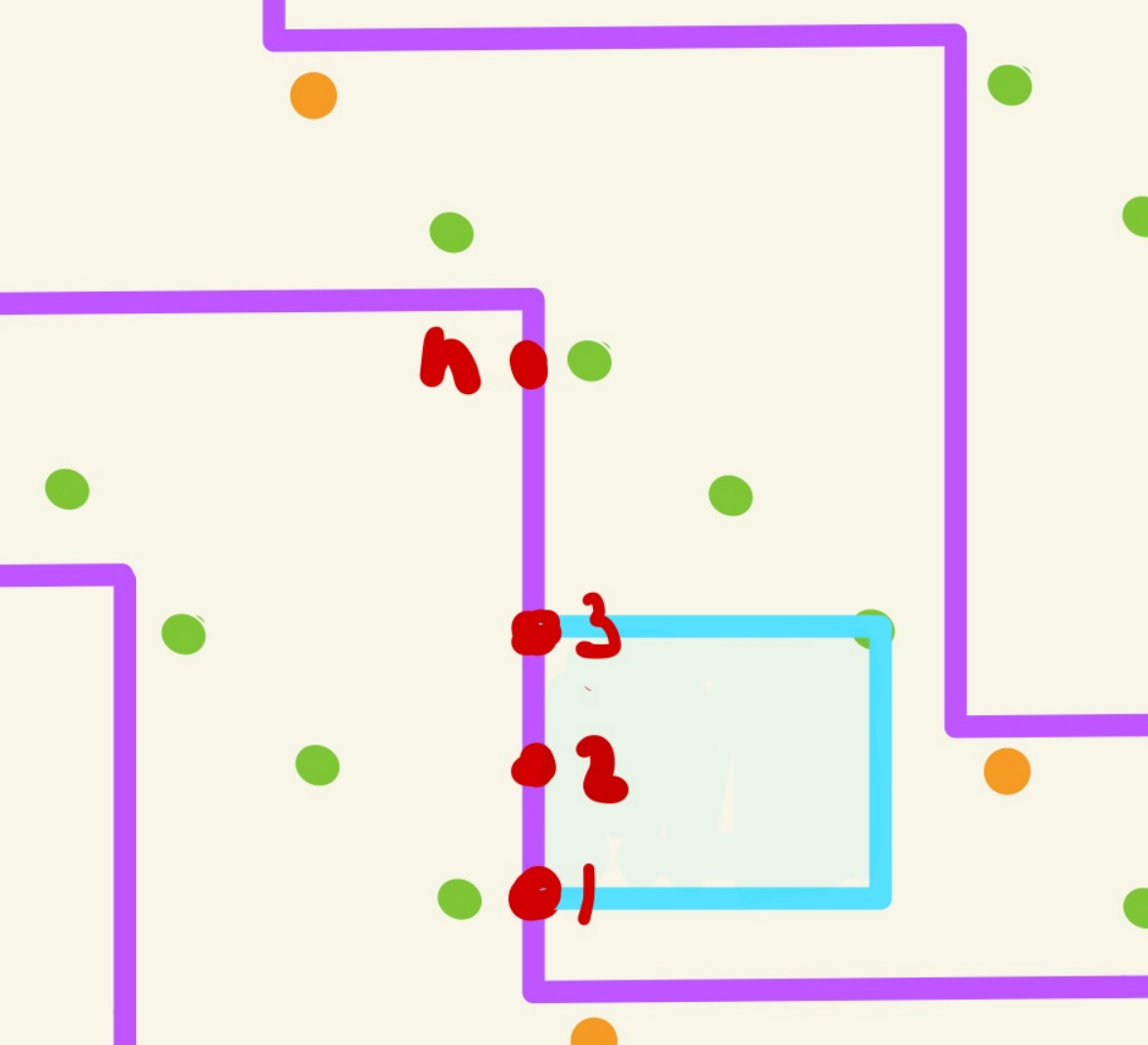}\qquad
\includegraphics[height=0.22\textheight]{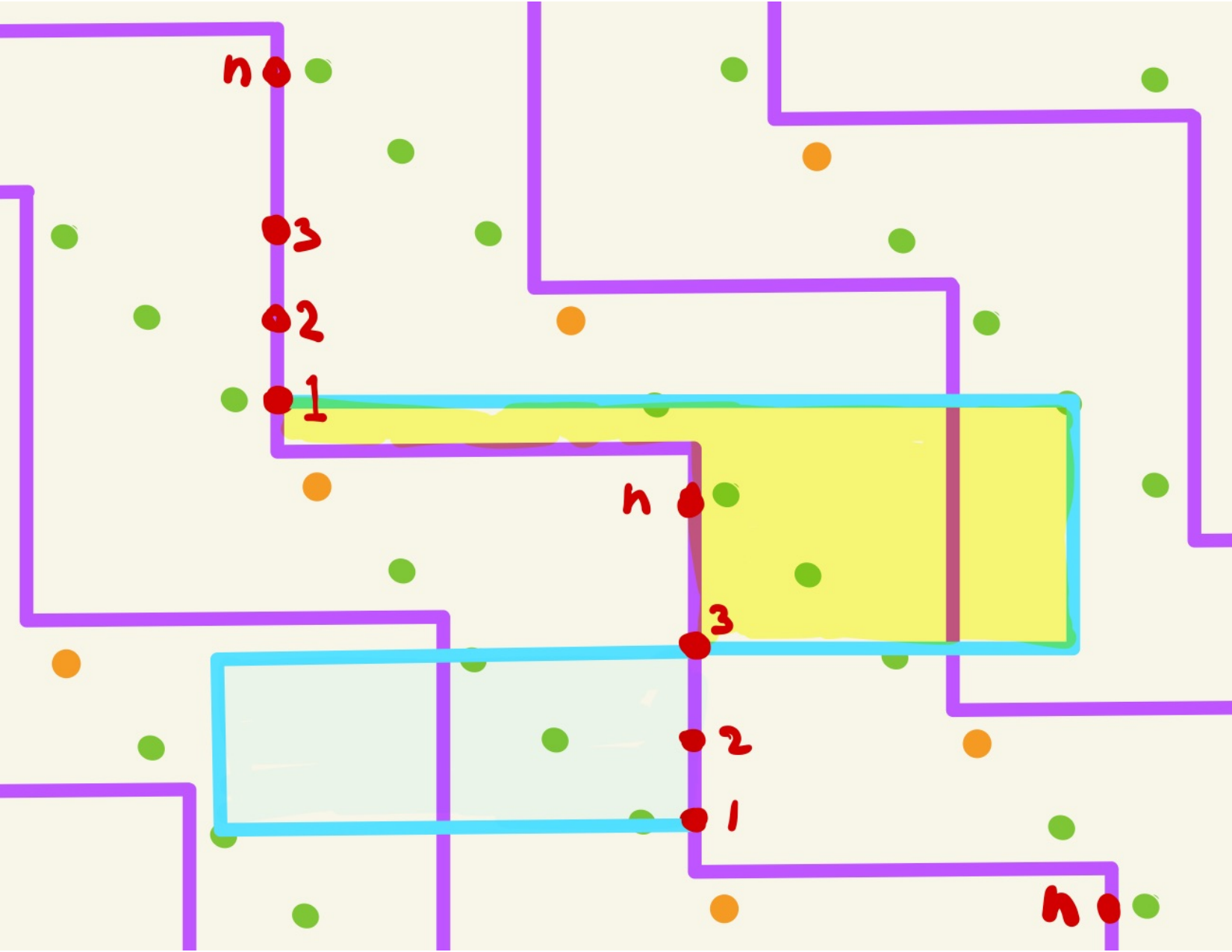}
\end{center}
\caption{Matrix entries $A_{ij}$ for $i>j$ (left) and $i<j$ (right)}\label{sRGrwgrherqhq}.
\end{figure}

When $i>j$, there is only one possibility: the polygon is a rectangle (possibly degenerated into a triangle) 
with the west side given by points $e_i$ and $e_j$, the NE corner either green or orange,
and no other orange points. 
The SE corner  is a basis element $w_k$ of $\Hom(\bK_\bb,\bK_\bb)$.
This is illustrated on the left  of Figure~\ref{sRGrwgrherqhq} (for $A_{31}$).

When $i<j$, there are two possibilities, which explains why the formulas in Theorem~\ref{wFGSRGARGARE}
are more complicated in this case.
 The first possibility, analogous to the case of $i>j$, is to take 
a rectangle (possibly degenerated into a triangle) 
with the east side given by points $e_i$ and $e_j$, the SW corner either green or orange,
and no other orange points.
The NW corner  gives a basis element $w_k$ of $\Hom(\bK_\bb,\bK_\bb)$.
This is illustrated by a blue rectangle on the right  of Figure~\ref{sRGrwgrherqhq} (for $A_{13}$).

The second possibility is to take a rectangle that starts at the point $e_i$, goes to the right along the Kawamata Lagrangian until a green (or orange point), goes down to a self-intersection point $w_k$ and then goes to the left to the point $e_j$. On the universal cover $e_j$
and $e_i$ are located on consecutive vertical parts of the Hacking Lagrangian.
This is illustrated by a yellow rectangle on the right side of Figure~\ref{sRGrwgrherqhq} (for $A_{13}$).
As always, the rectangle should not contain any orange points except, possibly, the NE corner.
The calculation of this composition includes a contribution from the local system on the Hacking Lagrangian.

Finally, contributions to $A_{ij}$ for $i=j$ are given by invisible polygons. The calculation here is entirely analogous to the proof of Corollary~\ref{argaergqehqeth}.
\end{proof}

It remains to prove the following lemma:

\begin{lem}\label{dfvdfbadfbdnadt}
Under homological mirror symmetry, 
the Lagrangian $\bK_\mathfrak{b}$ corresponds to the restriction $\cF|_\cE$ of the Kawamata vector bundle $\cF$
on the $\Q$-Gorenstein deformation $\cW$ of $W$ to the family of genus $1$ curves $\cE\subset\cW$
(up to tensoring with a line bundle).
\end{lem}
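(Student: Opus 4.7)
The plan is to identify $\bK_\bb$ with $\cF|_\cE$ (up to a line bundle twist) under the family version of HMS, i.e., under the equivalence $\mathcal{F}(\mathbb{T}_1,\{s\})\otimes_{k[s]}k[t]\simeq\Perf(\cE)$ where $s=t^n$. We will compare the two objects on the special fiber $t=0$ and on the generic fiber, and then upgrade the fiberwise matching to a family isomorphism.

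\emph{Special fiber.} The bounding cochain $\bb$ of Lemma~\ref{EFwegwrGWR} vanishes at $t=0$, since every nonzero coefficient $t_{nr}=t^r$ is divisible by $t$. Hence $\bK_\bb|_{t=0}=\bK$, which by the definition of the Kawamata Lagrangian corresponds under HMS to $F|_E$. On the other hand $\cF|_\cE|_{t=0}=F|_E$ by construction of $\cF$ as a deformation of $F$, so the two sides match at $t=0$.

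\emph{Generic fiber.} Lemma~\ref{wrgargaergh} gives $\Hom(\bK_\bb,\bH)\cong k[t^{\pm1}]^{\oplus n}$, and Lemma~\ref{sVsgsrGwrgh} identifies the specialization $\bH_s$ with $\cH|_{\sE_s}$ up to a degree $0$ twist. Evaluation then produces a morphism between rank $n^2$ vector bundles on the nodal cubic $\sE_s$: from the mirror $\cF_\bb|_{\sE_s}$ of $\bK_\bb|_{\sE_s}$ to $\cH|_{\sE_s}^{\oplus n}$ (up to a degree $0$ twist). Their first Chern classes agree, since $c_1$ is preserved under deformations by bounding cochains and on the other side by Kawamata's splitting \eqref{dfdfgadhadh}. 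The classification of simple vector bundles on a nodal cubic already invoked in Lemma~\ref{sVsgsrGwrgh} then forces this morphism to be an isomorphism up to a further degree $0$ twist, so the generic fibers match.

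\emph{Gluing.} Both $\cF_\bb$ and $\cF|_\cE$ are now flat deformations of $F|_E$ on $\cE\to\bA^1_t$ whose fibers agree up to degree $0$ twist. To glue to a family isomorphism, we match their endomorphism algebras: $\End(\cF_\bb)$, described explicitly in Lemma~\ref{afdgzdfhadrh}, and $\End(\cF|_\cE)$ are both orders in $\Mat_n(k(t))$ specializing to $R_{n^2,nq-1}$, and Kawamata's construction makes $\End(\cF|_\cE)$ the unique such matrix order deforming $R_{n^2,nq-1}$ in the $\Q$-Gorenstein direction. A Morita-theoretic argument for orders over $k[t]$ then shows the two vector bundles differ only by a line bundle twist on $\cE$, which is pinned to be trivial on the special fiber by the matching at $t=0$. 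The main obstacle is precisely this last step, since it requires transferring a fiberwise isomorphism up to twist into a family identification; this can be handled either by the Morita argument above, or alternatively by showing that the $\Q$-Gorenstein component of $\Def^0_{F_E/\sE}$ is rigid as a deformation of $R_{n^2,nq-1}$.
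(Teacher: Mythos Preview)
Your special-fiber step is fine, and you correctly identify the relevant inputs (Lemmas~\ref{wrgargaergh} and~\ref{sVsgsrGwrgh}) for the generic fiber. But the generic-fiber argument has a real gap. The mirror $\cF_\bb|_{\sE_s}$ of $\bK_\bb$ is \emph{not} a simple vector bundle: its endomorphism algebra has dimension $n^2$, so the Burban classification of stable bundles on the nodal cubic used in Lemma~\ref{sVsgsrGwrgh} does not apply to it. More concretely, knowing that $\dim\Hom(\cK'_t,\cH_t)=n$ and that the ranks and first Chern classes match does not force the evaluation map $\cK'_t\to\cH_t^{\oplus n}$ to be an isomorphism; a priori all $n$ morphisms could factor through a proper direct summand of $\cH_t^{\oplus n}$. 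The paper closes exactly this gap: it shows (by an explicit Fukaya computation with a perturbed copy $\bH'$ of the Hacking Lagrangian) that the composition pairing
\[
\Hom(\cK'_t,\cH_t)\otimes\Hom(\cH_t,\cK'_t)\longrightarrow\End(\cH_t)=k
\]
is non-degenerate, and then a short linear-algebra lemma (Lemma~\ref{svsFgasgr}) converts this into the isomorphism $\cK'_t\cong\cH_t^{\oplus n}$. Without that non-degeneracy check your argument does not go through.

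Your gluing step is also problematic. Invoking ``the unique such matrix order deforming $R_{n^2,nq-1}$ in the $\Q$-Gorenstein direction'' is circular: identifying $\cR_\bb$ with $\End(\cF|_\cE)$ is precisely what this lemma is supposed to establish, so you cannot assume that uniqueness as input. Once the fiberwise isomorphism $\cK'_t\cong\cH_t^{\oplus n}$ (up to a degree-$0$ twist) is actually proved for all $t\ne 0$, together with the agreement at $t=0$, the family statement ``up to tensoring with a line bundle on $\cE$'' follows without any Morita argument, and this is the route the paper takes.
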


\begin{proof}
Let $\cK=\cF|_\cE$ and let 
$\cK'\in\Perf(\cE)$ be an object that corresponds to $\bK_\mathfrak{b}$ under homological mirror symmetry.
Both $\cK'$ and $\cK$ are deformations of the same vector bundle $F|_E$ on the special fiber $E\cong E_2$ of $\cE$.
In particular, $\cK'$ and $\cK$ are vector bundles of the same rank $n^2$ and degree~$nq$. It suffices to show that 
$\cK'_t$ and $\cK_t$ are isomorphic up to tensoring with a line bundle for every $t\ne0$.
As in Lemma~\ref{sVsgsrGwrgh}, let $\cH_t$ be the vector bundle that corresponds to the Lagrangian 
$\bH$ under mirror symmetry.
By~Lemma~\ref{sVsgsrGwrgh} and \eqref{dfdfgadhadh}, it suffices to prove that 
$\cK'_t\cong\cH_t^{\oplus n}$. By~Lemma~\ref{wrgargaergh} and mirror symmetry, we have 
$\Hom(\cK'_t,\cH_t)\cong k^{\oplus n}$. 
It follows that $\Hom(\cH_t, \cK'_t)\cong\Ext^1(\cK'_t,\cH_t)^*\cong k^{\oplus n}$
since $\cK'_t$ and $\cH_t$ have the same slope.
By~Lemma~\ref{svsFgasgr}, it suffices to prove that 
the bilinear pairing
$$
\Hom(\cK'_t,\cH_t)\otimes \Hom(\cH_t,\cK'_t)\to k, \quad e\otimes f\mapsto e\circ f\in\Hom(\cH_t,\cH_t)=k
$$
is non-degenerate.
Let $e_1,\ldots,e_n\in\Hom(\bK_\mathfrak{b},\bH)$ be the basis used in the proof of Lemma~\ref{afdgzdfhadrh}
(represented by  red dots in Figure~\ref{adfbafgasf}).
Let $\bH'$ is a small perturbation of the Hacking Lagrangian $\bH$
 and let 
$f_1,\ldots,f_n\in\Hom(\bH', \bK_\mathfrak{b})$ be a basis
represented by  blue dots in Figure~\ref{adfbafgasf}.
As this picture illustrates, these bases are dual under the bilinear pairing
since the only rectangles contributing to the composition are the obvious black rectangles illustrated in this picture.
\end{proof}

\begin{figure}[hbtp]
\begin{center}
\includegraphics[width=0.5\textwidth]{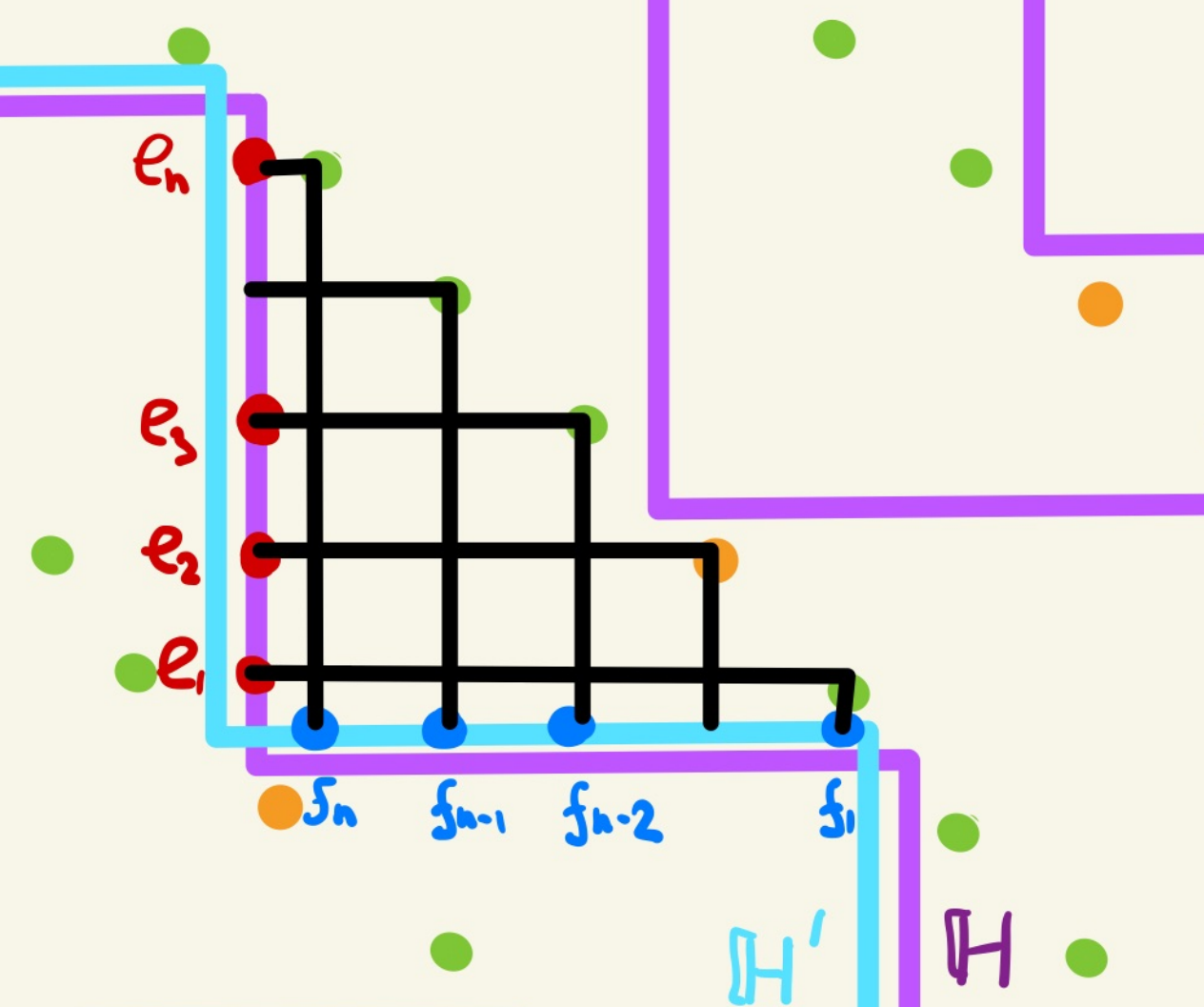}
\end{center}
\caption{Dual bases in $\Hom(\bK,\bH)$ and $\Hom(\bH',\bK)$}\label{adfbafgasf}.
\end{figure}

\begin{lem}\label{svsFgasgr}
Let $H$ be a simple vector bundle on a $k$-scheme $X$. Let $K$ be a vector bundle such that 
$\rk K=n\rk H$ for some integer $n$. Then $K$ is isomorphic to $H^{\oplus n}$ if and only if
$\Hom(K,H)$ and $\Hom(H,K)$ are $n$-dimensional vector spaces and the bilinear pairing
$$
\Hom(K,H)\otimes \Hom(H,K)\to k, \quad e\otimes f\mapsto e\circ f\in\Hom(H,H)=k
$$
is non-degenerate.
\end{lem}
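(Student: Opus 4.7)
The plan is to handle the two directions of the equivalence separately, with the nontrivial content concentrated in the reverse direction. For the forward direction, if $K\cong H^{\oplus n}$, then by simplicity of $H$ we have $\Hom(K,H)\cong \End(H)^{\oplus n}\cong k^n$ and $\Hom(H,K)\cong k^n$. With respect to the bases given by the projections $\pi_i\colon H^{\oplus n}\to H$ and inclusions $\iota_j\colon H\to H^{\oplus n}$, we have $\pi_i\circ\iota_j=\delta_{ij}\,\id_H$, so the pairing is the standard non-degenerate pairing on $k^n\otimes k^n$.

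For the reverse direction, assume $\Hom(K,H)$ and $\Hom(H,K)$ are both $n$-dimensional with non-degenerate composition pairing. Choose dual bases $e_1,\ldots,e_n\in\Hom(K,H)$ and $f_1,\ldots,f_n\in\Hom(H,K)$ satisfying $e_i\circ f_j=\delta_{ij}\,\id_H$; such bases exist precisely because the pairing is non-degenerate and $\End(H)=k\cdot\id_H$ by simplicity. Assemble these into morphisms of $\cO_X$-modules
\[
\phi\colon H^{\oplus n}\longrightarrow K,\quad (h_1,\ldots,h_n)\mapsto \sum_i f_i(h_i),\qquad \psi\colon K\longrightarrow H^{\oplus n},\quad k\mapsto (e_1(k),\ldots,e_n(k)).
\]
Then the $(i,j)$-component of $\psi\circ\phi$ is $e_i\circ f_j=\delta_{ij}\,\id_H$, so $\psi\circ\phi=\id_{H^{\oplus n}}$.

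Hence $\phi$ is a split monomorphism of $\cO_X$-modules, and we obtain a direct sum decomposition $K\cong H^{\oplus n}\oplus \ker(\psi)$. Because $\ker(\psi)$ is a direct summand of the vector bundle $K$, it is locally free, and comparing ranks using $\rk K=n\rk H$ forces $\rk\ker(\psi)=0$, so $\ker(\psi)=0$ and $\phi$ is an isomorphism. The argument is entirely formal once the dual bases are in place; the only place where the hypotheses enter essentially are (i) simplicity of $H$ to identify $\End(H)=k$, so that the pairing takes values in $k$ and the composition $\psi\circ\phi$ computed componentwise lands in scalar multiples of the identity, and (ii) the rank equality $\rk K=n\rk H$, which is needed to rule out a nonzero complementary summand. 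I do not expect a significant obstacle; the only mild subtlety is the passage from a split injection of $\cO_X$-modules to an isomorphism via the rank count, which is standard for vector bundles.
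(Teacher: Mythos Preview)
Your proof is correct and follows essentially the same approach as the paper: choose dual bases $e_i,f_j$ with $e_i\circ f_j=\delta_{ij}\id_H$, observe that the composite $H^{\oplus n}\to K\to H^{\oplus n}$ is the identity, and conclude by a rank argument. The only cosmetic difference is that the paper phrases the last step as ``$\psi$ is a surjection of locally free sheaves of the same rank, hence an isomorphism,'' whereas you phrase it as ``$\phi$ is a split monomorphism whose complementary summand has rank zero.''
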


\begin{proof}
The condition is certainly necessary. To show that it is sufficient, we choose dual bases
$e_1,\ldots,e_n\in \Hom(K,H)$ and $f_1,\ldots,f_n\in \Hom(H,K)$. Then the morphism
$H^{\oplus n}\to H^{\oplus n}$ given by the matrix $e_i\circ f_j$ for $i,j=1,\ldots,n$ is an identity isomorphism.
But it factors through the morphism $e:\,K\to H^{\oplus n}$ given by $e=(e_1,\ldots,e_n)$.
It follows that $e$ is a surjective morphism of locally free sheaves of the same rank. Therefore, $e$ is an isomorphism.
\end{proof}

\begin{proof}[Proof of Proposition~\ref{dfbsbF}]
Instead of using explicit formulas, we use presentation of the order from Corollary~\ref{argaergqehqeth}.
Recall that $s=t^n$.
Setting $\tilde w_i=w_i/s$ for $i\ne 0$ and $\tilde w_0=w_0$ in the formulas from Corollary~\ref{argaergqehqeth}
shows that the only products that survive in the limit as $t'=1/t\to0$ are the products 
$w_jw_i=sw_k$ appearing in the third polygon from the top, 
which in the limit become $\tilde w_j\tilde w_i=\tilde w_k$.\break
This algebra is isomorphic to $R_{n^2,nq-1}$ via an isomorphism 
$\tilde w_i\mapsto w_{-i}$.
\end{proof}


\begin{thebibliography}{99}

\bibitem{Abo} M.~Abouzaid, {\it On the Fukaya Categories of Higher Genus Surfaces},
Adv. in Math. {\bf 217} (2008), 1192--1235

\bibitem{BC} K.~Behnke, J.A.~Christophersen, {\it M-Resolutions and Deformations of Quotient Singularities}, American Journal of Mathematics {\bf 116} (1994), 881--903

\bibitem{Bu} I.I.~Burban, {\it Stable Bundles on a Rational Curve with One Simple Double Point},
Ukrainian Mathematical Journal {\bf 55} (2003), 1043--1053

\bibitem{Buchw} R.-O.~Buchweitz, {\it Maximal Cohen-Macaulay modules and Tate cohomology},
Mathematical Surveys and Monographs, AMS, Providence, RI, 2021, vol. 262, pp. xii+175

\bibitem{FOOO} K.~Fukaya, Y.-G.~Oh, H.~Ohta, K.~Ono,
{\it  Lagrangian intersection Floer theory: anomaly and obstruction, Part I}, American Mathematical Society, 2010


\bibitem{H13} P.~Hacking, {\it Exceptional bundles associated to degenerations of surfaces}, 
Duke Math. Journal {\bf 162} (2013), 1171--1202.

\bibitem{HTU17} P.~Hacking, J.~Tevelev, G.~Urz\'{u}a, {\it Flipping surfaces}, 
J.~Alg.~Geom. {\bf 26} (2017), 279--345

\bibitem{HillePloog} L.~Hille, D.~Ploog, {\it Tilting chains of negative curves on rational surfaces},
Nagoya Math. Journal {\bf 235} (2019), 26--41

\bibitem{KK17} M.~Kalck, J.~Karmazyn, {\it Non-commutative Kn\"orrer type equivalences via non-commutative resolutions of singularities}, arXiv:1707.02836
 
\bibitem{KK18} M.~Kalck, J.~Karmazyn, {\it Ringel duality for certain strongly quasi-hereditary algebras}, 
European Journal of Mathematics {\bf  4}, (2018), 1100--1140
 
\bibitem{KKS}
    J. Karmazyn, A. Kuznetsov, E. Shinder,
    {\it Derived categories of singular surfaces},
    J. Eur. Math. Soc. {\bf 24}, (2022) 461--526.
    
\bibitem{Kaw} Y.~Kawamata, 
{\it On multi-pointed non-commutative deformations and Calabi--Yau threefolds}, 
Compositio Mathematica, {\bf 154} (2018), 1815--1842.
    
\bibitem{K21} Y.~Kawamata,  {\it Semi-orthogonal decomposition and smoothing}, 
J.~Math. Sci. U.~Tokyo, {\bf 31} (2024), 127-185


\bibitem{KSB}J.~Koll\'{a}r, N.I.~Shepherd-Barron,
{\it Threefolds and deformations of surface singularities}, Invent. Math., {\bf 91} (1988), 299--338

\bibitem{KuSh} A.~Kuznetsov, E.~Shinder, 
{\it Categorical absorptions of singularities and degenerations},
{\'Epijournal de G\'eom\'etrie Alg\'ebrique} (2024)

\bibitem{LPol} Y.~Lekili, A.~Polishchuk, {\it Arithmetic mirror symmetry for genus $1$ 
curves with $n$ marked points}, Selecta Mathematica {\bf 23} (2017), 1851--1907.

\bibitem{LPolAus} Y.~Lekili, A.~Polishchuk, {\it Auslander orders over nodal stacky curves and partially wrapped Fukaya categories}, Journal of Topology {\bf 11} (2018), 615--644

\bibitem{LPerutz} Y.~Lekili, T.~Perutz, {\it Fukaya categories of the torus and Dehn surgery},
PNAS {\bf 108} (2011), 8106--8113

\bibitem{LTcode} Y.~Lekili, J.~Tevelev, {\it Computer code implementation of Corollary~\ref{argaergqehqeth}},
available upon request

\bibitem{Orlov} D.~Orlov, {\it Triangulated categories of singularities and D-branes in Landau-Ginzburg models},
Algebraic geometry: Methods, relations, and applications, Tr.
Mat. Inst. Steklova 246 (2004), 240--262.

\bibitem{Pi74} H.~C.~Pinkham, {\it Deformations of algebraic varieties with $\Bbb G_m$ action},
     {Ast\'erisque,
    Soci\'et\'e math\'ematique de France},
    {\bf 20} (1974)



\bibitem{Seidelgenus2} 
P.~Seidel, {\it Homological mirror symmetry for the genus two curve},
J.~Alg. Geom. {\bf 20} (2011), 727--769

\bibitem{Seidelbook} P.~Seidel, {\it Fukaya categories and Picard--Lefschetz theory}, 
European Mathematical Society (EMS), Z\"urich, 2008, vii+326 pp.

\bibitem{TU22} J.~Tevelev, G.~Urz\'{u}a, {\it Categorical aspects of the Koll\'ar--Shepherd-Barron correspondence}, 2022, 44 pp., arXiv:2204.13225.
  
\bibitem{wunram} J.~Wunram, {\it Reflexive modules on cyclic quotient surface singularities},
LNM {\bf 1273} (1987)

\bibitem{zuniga} J.P.~Z\'u\~niga, {\it Program to compute all M-resolutions and N-resolutions of a cyclic quotient singularity} (2022), \url{https://colab.research.google.com/drive/19aOVSpunJPOiY4TTCvt8Q5D2KQOr8_t7?usp=sharing}


 
\end{thebibliography}
\end{document}